\documentclass[twoside, 12pt, reqno]{amsart}

\DeclareSymbolFont{symbols}{OMS}{cmsy}{m}{n}

\usepackage{fullpage}
\usepackage[utf8]{inputenc}
\usepackage[T1]{fontenc}

\usepackage{mathrsfs}
\usepackage{mathtools}
\mathtoolsset{showonlyrefs,showmanualtags}
\usepackage{amsfonts}
\usepackage{amsmath, amssymb, amsthm}
\usepackage{enumerate}
\usepackage{graphicx, tikz}
\usepackage{ifthen}
\usetikzlibrary{arrows, scopes}
\usetikzlibrary{calc, cd}
\usetikzlibrary{decorations.pathreplacing}
\usetikzlibrary{decorations.pathmorphing}
\usetikzlibrary{patterns}
\usetikzlibrary{fadings}
\usetikzlibrary{cd}
\usetikzlibrary{automata, positioning}
\usetikzlibrary{calc}
\usepackage{tikz-cd}
\usepackage{stmaryrd}
\usepackage{setspace}
\usepackage{hyperref}
\usepackage[alphabetic, initials]{amsrefs}
\usepackage{bbm}

\newtheorem{thm}{Theorem}[section]
\newtheorem{lem}[thm]{Lemma}
\newtheorem{cor}[thm]{Corollary}
\newtheorem{prop}[thm]{Proposition}
\theoremstyle{definition}
\newtheorem{defi}[thm]{Definition}

\theoremstyle{remark}
\newtheorem{rmk}[thm]{Remark}

\newcommand{\Hil}{\mathcal{H}}
\newcommand{\slot}{{~\cdot~}}

\newcommand{\Spin}{\mathop{\mathsf{Spin}}}
\newcommand{\Sp}{\mathop{\mathsf{Sp}}}

\newcommand{\SU}{\mathop{\mathsf{SU}}}

\newcommand{\U}{{\mathsf{U}}}

\newcommand{\A}{\mathcal{A}}
\newcommand{\cI}{\mathcal{I}}

\newcommand{\RR}{\mathbb{R}}
\newcommand{\TT}{\mathbb{T}}

\newcommand{\ZZ}{\mathbb{Z}}
\newcommand{\NN}{\mathbb{N}}

\DeclareMathOperator{\Diff}{Diff}

\DeclareMathOperator{\Hom}{Hom}

\DeclareMathOperator{\Ad}{Ad}

\DeclareMathOperator{\id}{id}
\DeclareMathOperator{\Tr}{Tr}

\DeclareMathOperator{\B}{B}
\DeclareMathOperator{\supp}{supp}

\DeclareMathOperator{\Aut}{Aut}

\usepackage{enumerate}  
\usepackage{enumitem}
\newcommand{\e}{\mathrm{e}}
\newcommand{\dd}{\mathrm{d}}
\newcommand{\ima}{\mathrm{i}}

\newcommand{\Lpc}{\mathcal{L}}

\newcommand{\op}{\mathrm{op}}

\usepackage{xspace}

\DeclareRobustCommand{\ie}{i.e.\@\xspace}
\makeatletter
\DeclareRobustCommand{\etc}{%
    \@ifnextchar{.}%
        {etc}%
        {etc.\@\xspace}%
}
\makeatother

\newcommand{\threeone}{III$_1$\@\xspace}

\def\u1net{{\A_\RR}}

\mathtoolsset{showonlyrefs,showmanualtags}

\newcommand{\LL}{\mathrm{L}}
\newcommand{\basicform}[2]{\langle #1,#2\rangle_{\mathrm{basic}}}

\DeclareMathOperator{\Out}{\mathsf{Out}}

\DeclareMathOperator{\AUT}{\mathsf{Aut}}

\newcommand{\SOne}{\mathrm{S}^1}

\usepackage{tikz-cd}

\newcommand{\Borel}{\mathrm{Borel}}

\newcommand{\LieG}{\mathfrak{g}}
\newcommand{\LieT}{\mathfrak{t}}
\newcommand{\rootsys}{\Phi}

\newcommand{\Qvee}{Q^{\vee}}
\newcommand{\Rvee}{\rootsys^{\vee}}
\newcommand{\weight}{P}

\DeclareMathOperator{\Weyl}{W}

\DeclareMathOperator{\Sym}{Sym}

\begin{document}
\date{\today}
\dateposted{\today}
\newcommand{\mytitle}{%
Anomalies for compact simple Lie groups via $G$-kernels for loop group nets}
\title{\mytitle}
\author{Marcel Bischoff}
\email{mrclbschff@gmail.com}
\author{Pradyut Karmakar}
\address{Sam Houston State University, 332 A LDB, 1900 Avenue I, Huntsville, TX 77340}
\email{karmakar.pradyut@gmail.com}
\dedicatory{}

\begin{abstract}
  Let $G$ be a compact simply connected simple Lie group and $k\ge1$.
  Jump loops give an outer homomorphism $\theta_k$ from $G$ to the outer
  automorphisms of the local factor of the level-$k$ loop group net,
  which is the hyperfinite type \threeone factor. We show that its
  Sutherland obstruction is $k$ times the positive basic class in
  $H^3_{\Borel}(G,\TT)\cong H^4(BG,\ZZ)$ and that $\theta_k$ is
  faithful, so it is a $G$-kernel. Central elements act by the outer
  automorphisms of the simple currents. In types $A$, $D$, $E$ the level-one class
  is computed by comparing maximal-torus jump loops with the lattice
  $\alpha$-induction construction of the companion paper~\cite{BiKa} on
  both current and charged generators, followed by integral degree-four
  Cartan restriction. Higher levels follow by tensor additivity, and the
  remaining types by restriction along index-one embeddings into simply
  laced groups. As a consequence, for every compact simply connected
  semisimple $G$, every class in $H^4(BG,\ZZ)$ is the obstruction of a
  $G$-kernel on the hyperfinite type \threeone factor.
\end{abstract}

\maketitle

\setcounter{tocdepth}{3}
\tableofcontents

\section{Introduction}
\label{sec:Introduction}

The program of realizing Sutherland's obstruction~\cites{Su1980, Su1980-II} for
continuous compact groups as kernels on hyperfinite type \threeone factors was
initiated in the companion paper~\cite{BiKa}. There we considered the
abelian case: for an even lattice $L\subseteq F$ inside a finite-dimensional
Euclidean space, the conformal net $\A_L$ of~\cite{DoXu2006} associated with $L$
admits an $F/L$-kernel obtained from $\alpha$-induction through the local
extension $\A_F\subseteq \A_L$, and its Sutherland obstruction is the even form
of $L$ seen as a class in $H^{4}(B(F/L),\ZZ)\cong \operatorname{Sym}^{2}(L^*)$.
Here ``$F/L$-kernel'' means a monomorphism into the \emph{outer} automorphism
group. The lattice extension also has the honest dual gauge action of
$F/L^*$, fixing $\A_F$ pointwise. The outer kernel is represented by
twisted sectors whose boundary action is the image under $F/L\to F/L^*$;
see Section~\ref{ssec:GaugeVsLocal}.
As a corollary, every obstruction for the $n$-torus $\U(1)^{n}$ is realized on
  the hyperfinite type \threeone factor (\cite{BiKa}, the corollary on
  realization of all torus obstructions in the cohomology computation).

The main result of this note is the following. Write $R$ for
the hyperfinite type \threeone factor and $c_G\in H^4(BG,\ZZ)$ for the
positive basic class, normalized by its coroot quadratic form, and
\[
  \tau\colon H^4(BG,\ZZ)\longrightarrow H^3_{\Borel}(G,\TT)
\]
for the characteristic-class identification.

\begin{thm}[Anomaly is the level]
  \label{thm:MainIntro}
  Let $G$ be a compact connected simply connected simple Lie group and
  $k\ge1$. Jump loops define a $G$-kernel
  \[
    \theta_k\colon G\longrightarrow\Out(\A_{G_k}(I))
  \]
  on the local factor $\A_{G_k}(I)\cong R$ of the level-$k$ loop group
  net, and its Sutherland obstruction is $\tau(kc_G)$. Consequently, for
  every compact simply connected semisimple $G$, every class in
  $H^4(BG,\ZZ)$ is the obstruction of a $G$-kernel on $R$.
\end{thm}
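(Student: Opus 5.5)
The proof has four parts: construct $\theta_k$, compute its obstruction in types $A$, $D$, $E$ at level one, pass to higher levels by additivity, and reach the remaining types by restriction.

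\textbf{Construction of $\theta_k$.} For $g\in G$ and an interval $I$, I will choose a smooth loop $\gamma_g\colon \SOne\to G$ that equals $1$ on $I'$ and equals the constant $g$ on a slightly smaller interval inside $I$. This is a jump loop, made smooth by localizing the jump in the complement. Its image in the level-$k$ projective representation implements an automorphism of $\A_{G_k}(I)$. On the part of $I$ where $\gamma_g$ is constant this automorphism is $\Ad$ of the global gauge action. Different choices of $\gamma_g$ differ by loops supported in $I$, whose implementers lie in $\A_{G_k}(I)$ by locality and Haag duality. So the outer class $\theta_k(g)$ is well defined, and $g\mapsto\theta_k(g)$ is a homomorphism into $\Out(\A_{G_k}(I))$. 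Borel measurability of the lifts follows from a Borel section of $\Lpc G\to G$, $\gamma\mapsto\gamma(x_0)$.

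\textbf{Faithfulness.} Suppose $\theta_k(g)$ is inner. Then the corresponding DHR-type sector (a twisted soliton) is trivial. For noncentral $g$ this contradicts the nontrivial action on the local charged or current fields near the jump point. For central $g$ the jump loop with central jump is a loop in $G/Z$, and it implements the simple-current automorphism, which is outer because simple currents are nontrivial sectors. So $\theta_k$ is a $G$-kernel.

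\textbf{Obstruction computation.} I will first reduce to level one. The net $\A_{G_k}$ embeds in $\A_{G_1}^{\otimes k}$ through the diagonal embedding, and jump loops are compatible with this embedding. Sutherland obstructions are additive under tensor products, so the class is $k$ times the level-one class. For simply laced $G$ at level one, $\A_{G_1}=\A_{Q}$ is the lattice net of the root lattice. Restricted to a maximal torus $T=\LieT/\Qvee$, the jump loops agree modulo inner automorphisms with the $\alpha$-induction kernel of \cite{BiKa}. I will check this on both the $U(1)^n$ currents and the vertex operators. By \cite{BiKa}, the restricted obstruction is therefore the even form of $Q$, viewed in $\Sym^2(\LieT^*_\ZZ)$, that is, the basic coroot form. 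Restriction $H^4(BG,\ZZ)\to H^4(BT,\ZZ)^{\Weyl}$ is injective, since both groups are $\ZZ$ with the basic class generating, and $\tau$ is natural. Hence the level-one class is $\tau(c_G)$. For non-simply-laced $G$, I will choose an index-one embedding $G\hookrightarrow H$ with $H$ simply laced, for example $\Sp(n)\subset\SU(2n)$ or $\Spin(2n+1)\subset\Spin(2n+2)$. Under such an embedding the level-one net of $H$ restricts to a net containing $\A_{G_1}$ as a subnet, and $c_H$ pulls back to $c_G$. Naturality of Sutherland obstructions under restriction of kernels then gives the claim.

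\textbf{Semisimple case.} A compact simply connected semisimple $G$ is a product $\prod G_i$, and $H^4(BG,\ZZ)=\bigoplus\ZZ c_{G_i}$. For a class $\sum k_i c_{G_i}$, the positive coefficients are realized by tensor products of the $\theta_{k_i}$. Negative coefficients are realized by composing with the opposite algebra, or complex conjugation, which negates the obstruction. A zero coefficient is realized by $R\otimes R\cong R$ with a trivially obstructed faithful kernel, such as the one coming from an outer action. Tensoring with $R$ keeps the factor isomorphic to $R$.

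The main obstacle is the matching step for the maximal torus. I have to show that the two kernels on $\A_Q(I)$ differ only by an inner, Borel-coherent correction. Only then do the $2$-cocycles and $3$-cocycles coincide rather than merely the outer maps. I would first compare the actions on generators explicitly via the vertex operator formulas, and then argue that the correcting unitaries can be chosen to satisfy the required cocycle identity.
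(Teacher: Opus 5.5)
Your construction of $\gamma_g$ fails as written, although your phrase about localizing the jump in the complement points the right way. A smooth loop equal to $1$ on all of $I'$ lies in $\LL_I G$ by definition. So $\pi_k(\gamma_g)\in\A_{G_k}(I)=\pi_k(\Lpc_I G_k)''$, and $\Ad\pi_k(\gamma_g)$ is \emph{inner}. Your $\theta_k$ is therefore the trivial homomorphism, which is also why all your choices ``differ by loops supported in $I$''. The jump must be visible from outside $I$. With $I=(a,b)$ and flanks $(c,a)$, $(b,d)$ in $I'$, take $\gamma_g=e$ on $(c,a)$ and $\gamma_g=g$ on $(b,d)$, put the transition inside $I$, and make the return to $e$ in the arc from $d$ to $c$. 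Then $\Ad\pi_k(\gamma_g)$ is the identity on the left flank and the gauge action of $g$ on the right flank; this mismatch is what can make it outer. With such loops, $\eta(g,h)=\gamma_g\gamma_h\gamma_{gh}^{-1}$ is not supported in $I$; it is only $e$ near $\partial I$. The homomorphism property needs the splitting $\eta=\eta_I\eta_{I'}$ with $\eta_I\in\LL_IG$ and $\eta_{I'}\in\LL_{I'}G$, together with locality for $\pi_k(\eta_{I'})$. This splitting also supplies the defects $u_k(g,h)=\pi_k(\eta_I(g,h))\in\A_{G_k}(I)$, on which the associator, the tensor-power step and the index-one restriction all rest.

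Your faithfulness argument for noncentral $g$ is not a proof. Inner automorphisms also act nontrivially on current and charged fields, and flank behaviour alone does not decide innerness. The missing idea is to compute the class first. Then $\ker\theta_k$ is a normal subgroup. It is proper, because the trivial homomorphism has class $0\neq\tau(kc_G)$ in the torsion-free group $H^4(BG,\ZZ)$. A proper normal subgroup, closed or not, of a compact connected group with simple Lie algebra is central (via Yamabe's theorem).

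For central $z$, ``simple currents are nontrivial sectors'' also needs a bridge. You must turn $\alpha_{z,k}=\Ad u$ on $\A_{G_k}(I)$ into a global intertwiner. Use flank triviality for central $z$, strong additivity on $(c,d)$ and locality on $I'$. Then $u$ intertwines $\pi_k$ with $\pi_k\circ\Ad\zeta$, where $\zeta$ is a discontinuous loop representing $z$ in $\LL_{Z(G)}G/\LL G$. That representation has highest weight $k\lambda_i^\vee\neq0$, so it is inequivalent to the vacuum, which gives the contradiction.

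Two smaller points. The ``main obstacle'' you name is not one, because the Sutherland class depends only on the outer homomorphism. It suffices that torus-valued profiles, permitted by profile independence, give exactly the $\alpha$-induced lifts on current and charged generators. Conversely, subnet inclusion plus ``naturality'' does not by itself transfer the class to $\A_{G_k}(I)$, or from $\A_{H_1}(I)$ to $\A_{G_1}(I)$. You must check that the lifts $\pi_1(\gamma_g)^{\otimes k}$ (resp.\ $\pi_H(\varphi\circ\gamma_g)$) and the local defects preserve the vacuum-generated invariant subspace. Then restrict the associator identity to that subspace.
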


This is Theorem~\ref{thm:AnomalyAllTypes} together with
Corollary~\ref{cor:EveryObstruction} and
Remark~\ref{rmk:EveryObstructionSemisimple}. The input is the simply laced
level-one case, where the loop group net is a lattice net:

\begin{thm}[Simply laced realization]
  \label{thm:SimplyLacedRealization}
  Let $G$ be a compact connected simply connected simple Lie group of type
  $A_n$, $D_n$ ($n\ge4$), $E_6$, $E_7$, or $E_8$. For every integer
  $k\ge1$ there is a $G$-kernel
  \[
    \theta^{[k]}\colon G\longrightarrow\Out(R)
  \]
  whose Sutherland obstruction is $\tau(kc_G)$.
  More generally, for $G=\prod_jG_j$ with simply laced simple factors,
  every tuple of positive levels $(k_j)_j$ is realized in this way.
\end{thm}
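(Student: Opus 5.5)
For simply laced $G$ at level one, I would identify the level-one loop group net $\A_{G_1}$ with the lattice net $\A_Q$ of the root lattice $Q=\Qvee$, which is even with form $\langle\alpha,\alpha\rangle=2$ on roots. Set $F=\LieT$. The companion paper~\cite{BiKa} supplies a $T=F/Q$-kernel on $\A_Q(I)\cong R$ via $\alpha$-induction along $\A_F\subseteq\A_Q$, with obstruction the even form of $Q$ in $\Sym^2(Q^*)\cong H^4(BT,\ZZ)$. By definition this form is the coroot quadratic form, which is exactly the restriction of $c_G$ to the maximal torus.

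To get a kernel of all of $G$, I would realize $\theta_1$ by jump loops. For $g\in G$, choose a loop $\gamma_g$ that equals $1$ on one side of $I$ and $g$ near the other endpoint. $\Ad$ of its projective representative in the level-one representation gives an automorphism of $\A_{G_1}(I)$. Changing $\gamma_g$ alters this automorphism only by an inner one, so $\theta_1\colon G\to\Out(R)$ is a well-defined homomorphism. It is continuous, and faithfulness follows as in the main theorem: a $g$ acting innerly would commute with the local algebra, and irreducibility plus the central-element analysis (simple currents) excludes this.

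The key comparison is to show that on $T$, $\theta_1|_T$ agrees with the lattice $\alpha$-induction kernel. I would check this on generators: on the $U(1)$-currents $\LieT$ both act by the same shift, and on the charged vertex operators $e^{i\alpha}$, $\alpha\in Q$, both act by the same phase cocycle. Since $H^4(BG,\ZZ)\to H^4(BT,\ZZ)^{\Weyl}$ is injective, and Sutherland's obstruction is natural under restriction, the obstruction of $\theta_1$ is then $\tau(c_G)$. I expect this generator comparison to be the main obstacle, in particular matching the cocycle conventions for charged fields with the \cite{BiKa} construction and keeping track of signs so that the class is $+c_G$ rather than $-c_G$.

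For level $k$, take $R\cong R^{\otimes k}$ with the diagonal kernel $\theta^{[k]}=\theta_1^{\otimes k}$. This is still faithful, and Sutherland obstructions add under tensor products, giving $\tau(kc_G)$. For a product $G=\prod_jG_j$, take the tensor product of the $\theta^{[k_j]}_{G_j}$. Its obstruction is $\sum_j \mathrm{pr}_j^*\tau(k_jc_{G_j})$, and because $H^4(BG,\ZZ)=\bigoplus_jH^4(BG_j,\ZZ)$ for simply connected $G$, this realizes every tuple of positive levels. Faithfulness again holds factorwise.
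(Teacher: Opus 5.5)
Your architecture is the paper's: the Frenkel--Kac--Segal identification $\A_{G_1}\cong\A_Q$, a generator-level comparison of torus jump loops with the companion's $\alpha$-induced lifts on currents and charged fields (Proposition~\ref{prop:TorusLoopInduction}), injectivity of Cartan restriction, diagonal tensor powers for level $k$, and tensor products for semisimple $G$.

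The genuine gap is \emph{faithfulness at level one}, and your argument for it does not work as written. If $\alpha_{g,1}=\Ad u$ with $u\in\A_{G_1}(I)$, nothing makes $U_{g,1}$ commute with the local algebra. You only get $u^*U_{g,1}\in\A_{G_1}(I)'$. Since $u$ commutes with both flanks by locality, this operator is trivial on the left flank and acts by the gauge action of $g$ on the right flank (Lemma~\ref{lem:FlankTriviality}), which by itself gives no contradiction. The simple-current argument (Proposition~\ref{prop:CentreOuter}) treats only $g\in Z(G)$. You give no reduction from a noncentral $g$ to the centre, and irreducibility or flank triviality do not supply one (compare Remark~\ref{rmk:Faithfulness}).

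The repair is already in your proposal, but it must come \emph{after} the torus comparison. Once $\theta_1|_T$ is identified with the companion's $T$-kernel, which is faithful by~\cite{BiKa}, injectivity on $G$ follows because every element of $G$ is conjugate into $T$ (Lemma~\ref{lem:TorusFaithfulness}); this is the paper's route. Alternatively, $\ker\theta_1$ is a normal, possibly non-closed subgroup. It is proper because the obstruction is nonzero in the torsion-free group $H^4(BG,\ZZ)$, hence central by Lemma~\ref{lem:NormalSubgroups}. Only at that point does the central-element analysis finish the argument.

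Two smaller points. First, ``still faithful'' for $\theta_1^{\otimes k}$ and for the product kernels needs the slicing argument. If $\alpha_{g,1}^{\otimes k}=\Ad W$, a nonzero normal slice $a$ of $W$ over the last $k-1$ factors satisfies $\alpha_{g,1}(x)a=ax$. Factoriality makes $a^*a$ and $aa^*$ scalar, so the polar part of $a$ is a unitary implementing $\alpha_{g,1}$, and level-one faithfulness forces $g=e$. Second, the definition of a $G$-kernel asks for a Borel lift $G\to\AUT(R)$, obtained from Borel choices of the jump profiles, not continuity into $\Out(R)$.
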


At level one the kernel acts on $\A_{G_1}(I)$, using its root-lattice
realization; for $k>1$ the theorem uses $\A_{G_1}(I)^{\bar\otimes k}\cong R$.
The rank-one level-one construction is the seed case:

\begin{thm}[The level-one $\SU(2)$-kernel]
  \label{thm:SU2main}
  Let $\A_L\cong \A_{\SU(2)_1}$ be the conformal net associated with the
  root lattice $L=A_1=\sqrt2\,\ZZ$ and let $I\in\cI$. The jump-loop
  construction gives a Borel outer homomorphism
  \[
    \theta\colon \SU(2)\longrightarrow \Out(\A_L(I))
  \]
  admitting a Borel automorphism lift. Its maximal-torus restriction is
  the $T_L$-kernel of~\cite{BiKa}. Consequently $\theta$ is an
  $\SU(2)$-kernel and its Sutherland obstruction is the
  positive basic generator under
  \[
    H^3_{\Borel}(\SU(2),\TT)\cong H^4(B\SU(2),\ZZ)\cong\ZZ.
  \]
\end{thm}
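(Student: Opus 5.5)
We construct $\theta$ concretely from the loop group, then read off the obstruction from its torus restriction. Identify $\A_L$ with $\A_{\SU(2)_1}$, so that $\A_L(I)$ is generated by $\pi_1(L_I\SU(2))''$, where $\pi_1$ is the vacuum positive-energy representation at level one. For $g\in\SU(2)$ choose a \emph{jump loop} $\gamma_g\in L\SU(2)$: a loop that is smooth away from one endpoint of $I$, equal to $1$ on $I'$ near the other endpoint, and jumping from $1$ to $g$ across the first endpoint. Because $\pi_1$ extends projectively to loops with such a controlled discontinuity, we can set
\[
\theta(g) = [\Ad \pi_1(\gamma_g)|_{\A_L(I)}] \in \Out(\A_L(I)).
\]
Locality shows that $\Ad\pi_1(\gamma_g)$ preserves $\A_L(I)$. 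It acts by $\Ad g$ on loops supported near the jump endpoint inside $I$, and trivially near the other endpoint.

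The first step is well-definedness and multiplicativity modulo inner automorphisms. Two choices of $\gamma_g$ differ by an honest smooth loop, up to something localized in $I'$ that commutes with $\A_L(I)$. The loop part contributes an inner automorphism of $\A_L(I)$ by Haag duality. Likewise $\gamma_g\gamma_h\gamma_{gh}^{-1}$ has no jump, so $\theta(g)\theta(h)=\theta(gh)$ in $\Out$.

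The second step is Borel measurability and a Borel automorphism lift. Choose $g\mapsto\gamma_g$ Borel: for example, use $g=\exp X$ with $X$ in a Borel fundamental domain, and the path $t\mapsto \exp(\chi(t)X)$ for a fixed cutoff $\chi$. Then $g\mapsto\pi_1(\gamma_g)$ is Borel into the projective unitary group, and $\alpha_g=\Ad\pi_1(\gamma_g)|_{\A_L(I)}$ is a Borel lift.

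The third step compares with the torus. For $g=\exp(2\pi \ima x)\in T_L=F/L$, choose $\gamma_g$ inside the maximal torus loop group, as a jump in the $\U(1)$ current. The corresponding unitary is the charged or twisted-sector unitary used in the $\alpha$-induction construction of \cite{BiKa}. On currents this is the Weyl-operator computation. On charged vertex operators, the jump produces the phase $e^{2\pi\ima\langle x,\lambda\rangle}$, which matches the boundary action of the twisted sector. Hence $\theta|_{T_L}$ equals the $T_L$-kernel of \cite{BiKa}, possibly after an inner correction, and its obstruction is the even form of $L$, i.e.\ $x\mapsto x^2$ on $\sqrt2\ZZ$ (the generator of $\Sym^2(L^*)$).

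The fourth step is faithfulness. Nontrivial $g$ are either central ($g=-1$) or conjugate to a nontrivial torus element. The torus kernel of \cite{BiKa} is injective on $F/L$, since twisted sectors for $x\notin L$ are non-inner: they have nontrivial boundary charge. Therefore $\theta(g)\ne1$ for all $g\ne1$, and $-1$ acts by the simple-current outer automorphism.

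The final step uses restriction injectivity. Sutherland's obstruction is natural under restriction, and the restriction map $H^4(B\SU(2),\ZZ)\to H^4(BT,\ZZ)^{W}$ is injective: it is the Cartan restriction sending the generator to $x^2/2$ in the coroot normalization. So the obstruction of $\theta$ is the unique class restricting to the even form of $L$, namely the positive basic generator.

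The main obstacle is the third step. We must check, with the correct signs and cocycle conventions, that the jump-loop automorphisms agree with the $\alpha$-induction kernel up to inner automorphisms on \emph{both} the current algebra and the charged generators $e^{\pm\ima\sqrt2\varphi}$. An inner discrepancy given by a $T$-dependent phase cocycle would change the obstruction by a coboundary and do no harm. A mismatch in the sign of the charge would instead yield the negative generator, so orientation must be tracked carefully.
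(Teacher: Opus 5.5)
Your outline follows the paper's route: jump loops, identification of $\theta|_{T_L}$ with the companion kernel on currents and charged fields, faithfulness by conjugating into $T_L$, then integral Cartan restriction. Two steps fail as written.

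First, the construction. A loop jumping from $1$ to a non-central $g$ at a point $p$ is not in $\LL\SU(2)$, and $\pi_1$ does not extend to it, even projectively. Conjugation by it sends any smooth $\ell$ with $\ell(p)\notin C_G(g)$ to a discontinuous loop. So it is not an automorphism of $\LL G$ and has no implementer on $\Hil$; already for a torus jump, the logarithm is a step function, which is not in $H^{1/2}$. The paper instead uses an honest smooth loop, equal to $e$ on $(c,a)$ and to $g$ on $(b,d)$, whose return to $e$ is carried out inside $I'$. Then $\gamma_g\gamma_h\gamma_{gh}^{-1}$ is trivial only near $\partial I$. It is split as $\eta_I\eta_{I'}$, with $\pi_1(\eta_I)\in\A_L(I)$ and $\pi_1(\eta_{I'})$ commuting with $\A_L(I)$ by locality. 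This repair is routine.

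The real gap is the torus comparison. You flag it as the main obstacle but do not carry it out, and the heuristic you give checks the wrong thing. The phase $e^{2\pi\ima\langle x,\lambda\rangle}$ is, up to convention, the constant gauge phase. The jump loop produces it only on charged fields localized where $\gamma_g\equiv g$. Agreement with the boundary action of the twisted sector is a statement about the global symmetry to the right of $I$, not about the action on generators of $\A_L(I)$. What is needed is exact equality on $\A_L(I)$ with the companion's formula $\beta_q(\iota(a)\psi_\nu)=e^{-\pi\ima\langle q,\nu\rangle}\iota(\rho_q(a))\psi_\nu$. The paper proves this in Proposition~\ref{prop:TorusLoopInduction}. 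It takes the winding-zero torus loop $e^{2\pi\ima f_q}$ with $f_q=qH$ on $I$, and charged generators with $h_\nu'=\nu\ell$ on $I$. It then applies the lifted torus-loop commutator, including the constant-mode/winding pairing. The half comes from $\int_I\ell H\,\dd u=\tfrac12$.

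Your fallback ``possibly after an inner correction'' is not available. Once both automorphisms act by $\rho_q$ on $\iota(\A_F(I))$, their quotient fixes $\iota(\A_F(I))$ pointwise. It therefore rescales each $\psi_\nu$ by a character of $L$, so it is a dual gauge automorphism $\delta_f$. Since $\iota(\A_F(I))'\cap\A_L(I)=\mathbb{C}$, $\delta_f$ is outer unless trivial. Such a discrepancy changes the torus quadratic form, not merely the cocycle by a coboundary.

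Two smaller corrections. Reversing $q$ throughout does not flip the class, since inversion of $T_L$ acts trivially on $\Sym^2(L^*)$; what flips it is the braiding orientation in the charged phase. And ``nontrivial boundary charge'' gives no outerness at $g=-1$, where $x\in L^*\setminus L$ has trivial boundary twist. There you need that $\beta_x$ is the nontrivial simple current, or simply the cited faithfulness of the companion kernel, which is what the paper uses.
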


The proof of Theorem~\ref{thm:MainIntro} has three steps.

\smallskip

\noindent\textbf{(i) Loop group realization.} The outer homomorphisms are
implemented by the level-$k$ vacuum representation $\pi_k$ of the central
extension $\Lpc G_k$ on the local von Neumann algebra
$\A_{G_k}(I)=\pi_k(\Lpc_I G)''$~\cites{PS1986, Wa, HenColimits}; their
composition defects can be localized in $I$ after separating the return
loop in $I'$. In every type the loop group net is completely rational and
its local algebras are hyperfinite of type \threeone
(Corollary~\ref{cor:LoopGroupNetAnalytic}).

\smallskip

\noindent\textbf{(ii) Torus comparison in types $A$, $D$, $E$.} At level
one the nonabelian local algebra is the lattice factor. There we identify
the torus jump automorphisms with the induced lifts of~\cite{BiKa} on both
current and charged generators. Torus faithfulness then implies group
faithfulness, and integral degree-four Cartan
restriction~\cite{Hen17}*{Theorem~6} identifies the obstruction with the
level. This is the precise role of the $A_1$ computation of~\cite{BiKa};
the known level of the vacuum representation by itself does not compute
the Sutherland obstruction.

\smallskip

\noindent\textbf{(iii) Levels and types.} Realizing the level-$k$ vacuum
inside the $k$-fold tensor power of the level-one vacuum multiplies the
associator by $k$ (Proposition~\ref{prop:DiagonalAdditivity}). The same
operator restriction along an index-one embedding into a simply laced
group gives the class in types $B_n$, $C_n$, $F_4$, $G_2$
(Proposition~\ref{prop:IndexOneRestriction}). Faithfulness reduces to the
centre, where central elements act by the outer automorphisms of the
simple currents (Proposition~\ref{prop:CentreOuter}).

\smallskip

Outside types $A$, $D$, $E$, the lattice route would need a comparison on
the ambient factor, which Theorem~\ref{thm:main} takes as a hypothesis. We
do not supply it; step (iii) bypasses it.

\subsection*{Organization}
Section~\ref{sec:Notation} fixes notation and recalls the loop-group conformal
nets and Sutherland obstruction. 
Section~\ref{sec:kernels} constructs the outer homomorphism from jump loops.
Section~\ref{sec:SU2} proves the torus-jump identification and
Theorem~\ref{thm:SU2main}. Section~\ref{sec:General} proves
Theorem~\ref{thm:SimplyLacedRealization} by tensoring, states the
conditional torus-route identification in Theorem~\ref{thm:main},
realizes every class (Corollary~\ref{cor:EveryObstruction}), and proves
the all-type identification Theorem~\ref{thm:AnomalyAllTypes}.
Section~\ref{sec:AnomalyIsLevel} records the resulting identification
\emph{anomaly $=$ level} as the universal phenomenon behind the computation.

\subsection*{Acknowledgements}
We thank James Tener for discussions on loop groups.

\subsection*{AI statement}
Claude AI has been used for fixing writing, grammar, typos and searching references. All mathematical ideas and proofs are solely due to authors.
\section{Notation and preliminaries}
\label{sec:Notation}

Unless a product is explicitly specified, $G$ is a compact, connected,
simply connected \emph{simple} Lie group. Its real Lie algebra is
$\LieG=\mathrm{Lie}(G)$. Fix a maximal torus $T\subset G$, with real
Cartan Lie algebra $F=\LieT=\mathrm{Lie}(T)$. Write
$\rootsys\subset \LieT^{*}$ for the root system, $\Qvee\subset
\LieT$ the coroot lattice, $Q\subset\LieT^{*}$ the root lattice, $\weight\supseteq
Q$ the weight lattice, and $\Weyl=N_G(T)/T$ the Weyl group. Because $G$ is
simply connected, the cocharacter lattice of $T$ is $\Qvee$, and we
identify $T=F/\Qvee$ using the normalized exponential map
$x\mapsto\exp_G(2\pi x)$ on the compact real Cartan. Thus the quotient
notation records the period lattice, including the $2\pi$ convention.
We write $\basicform{\slot}{\slot}$ for the basic invariant form, normalized so
that \emph{long roots} have squared length $2$ (equivalently, \emph{short
coroots} have squared length $2$). With this normalization $\Qvee$ is an
\emph{even} integral lattice: for a coroot $\alpha^\vee$ one has
$\basicform{\alpha^\vee}{\alpha^\vee}=4/\basicform{\alpha}{\alpha}\in\{2,2r\}$,
where $r\in\{1,2,3\}$ is the lacing number ($r=1$ precisely in the simply laced
case) and $2$ corresponds to the coroots of long roots (\ie the short coroots),
$2r$ to the coroots of short roots (\ie the long coroots); the coroot pairings
$\basicform{\alpha^\vee}{\beta^\vee}$ are integral. In particular every squared
length of $\Qvee$ is an even integer.

The \emph{level} in this note is a positive integer
$k\in\ZZ_{>0}$. We distinguish the basic form from the level form by
\[
  (x,y)_k:=k\basicform{x}{y}.
\]
Equivalently, the level specifies the positive characteristic
class $k\,c_G\in H^4(BG,\ZZ)$, where $c_G$ is the basic generator whose
torus restriction is $x\mapsto\basicform{x}{x}/2$ on $\Qvee$.
This is Henriques's integral-form description of levels
\cite{Hen17}*{Theorem~6}. In particular, $G$ is the group and $k$ is
the integer scaling the basic form; neither is part of the definition of
the other. For a simply connected semisimple product $G=\prod_jG_j$,
one instead chooses a tuple $k=(k_j)_j$ of positive integers and the
class $\sum_j k_jc_{G_j}$.

In the simply laced case $\rootsys=\Rvee$ and hence $Q=\Qvee$ inside $\LieT^*$
under the identification $\LieT\cong\LieT^{*}$ induced by the form; the
distinction between $Q$ and $\Qvee$ is then invisible. In the non-simply-laced
case $Q\neq \Qvee$ and the \emph{root} lattice $Q$ is \emph{not} even under the
basic form (short roots have squared length $2/r<2$), so $Q$ cannot serve as
the lattice seed; the coroot lattice $\Qvee$ is the correct choice.

Let $\cI$ be the set of proper intervals of $\SOne$ and fix $I\in\cI$ once and
for all.

\subsection{Loop groups, loop algebras, and the level extension}
\label{ssec:LoopGroupExtension}
We use upright $\LL$ for loop groups and loop algebras, and calligraphic
$\Lpc$ for the centrally extended groups.
For the group $G$ just specified, the \emph{smooth loop group} is
$\LL G=C^\infty(\SOne,G)$ with pointwise multiplication. Its Lie algebra
is $\LL\LieG=C^\infty(\SOne,\LieG)$ with pointwise bracket. The local
loop subgroup is
\[
 \LL_I G=\{\gamma\in\LL G:\gamma|_{I'}=e\}.
\]
All loops are smooth across the endpoints of $I$; thus extension by the
identity outside $I$ is smooth. The corresponding local loop algebra
consists of smooth Lie-algebra loops vanishing outside $I$.

A \emph{central extension} of $\LL G$ by $\TT$ is an exact sequence
\[
  1\longrightarrow\TT\longrightarrow\Lpc G
    \xrightarrow{p}\LL G\longrightarrow1
\]
in which the image of $\TT$ is central. A section of $p$ need not be a
group homomorphism; its multiplication defect is a scalar group
two-cocycle. Representations of the extension with the specified central
character are therefore projective representations of the loop group.

For each interval $I$ define the \emph{local central extension} by
\[
  \Lpc_I G:=p^{-1}(\LL_I G)\subset\Lpc G.
\]
Thus $\Lpc_I G$ is the inverse image of the local loop subgroup, not
a choice of section or of loop implementers, and fits into
\[
  1\longrightarrow\TT\longrightarrow\Lpc_I G
    \xrightarrow{p}\LL_I G\longrightarrow1.
\]
At level $k$ we write $\Lpc_I G_k=p^{-1}(\LL_I G)$ inside
$\Lpc G_k$; the projection $p$ is always that of the extension in use.

The basic form defines the affine Lie-algebra two-cocycle
\begin{equation}
  \label{eq:AffineLevelCocycle}
  \kappa_k(\xi,\eta)
     =\frac{1}{2\pi}\int_0^{2\pi}
         (\xi(\theta),\eta'(\theta))_k\dd\theta.
\end{equation}
Writing $c$ for the central Lie-algebra generator, the central extension
$\widehat{\LL\LieG}_k=\LL\LieG\oplus\RR c$ has bracket
\[
 [\xi+ac,\eta+bc]=[\xi,\eta]+\kappa_k(\xi,\eta)c.
\]
Antisymmetry follows by integration by parts on the circle, and the
cocycle identity follows from invariance of the basic form. The group
extension integrating the basic cocycle is the basic extension
$\Lpc G_1$; the integer $k$ scales its extension class
\cite{HenColimits}*{Section~2.1}, \cite{PS1986}*{Chapter~4}.
For the \emph{simple} group $G$ fixed in this subsection, the level-$k$
extension $\Lpc G_k$ is the pushout of the basic extension along
$\TT\to\TT$, $z\mapsto z^k$. Explicitly it is
\[
 \Lpc G_k=(\Lpc G_1\times\TT)/\{(z,z^{-k}):z\in\TT\}.
\]
The induced map $F_k\colon\Lpc G_1\to\Lpc G_k$, $a\mapsto[a,1]$,
sends a central element to
$F_k(z)=[z,1]=[1,z^k]$. Thus, if the central circle of $\Lpc G_k$
acts in $\pi_k$ by its defining scalar character, then the pulled-back
representation satisfies
\[
  (\pi_k\circ F_k)(z)=z^k\,1\qquad(z\in\TT).
\]
This is the reason for the central-character convention on the fixed
basic extension. It describes integer multiples of the \emph{chosen
simple-group basic class}, not a scalar parametrization of levels for
arbitrary compact groups.

\begin{def}\label{Profile}
A \textbf{profile function} is a smooth cutoff function $\chi \colon S^1 \to [0, 1]$ that equals $0$ on an open arc to the left of interval $I$, equals $1$ on an open arc to the right of $I$, and transitions smoothly between $0$ and $1$ inside $I$ and its complementary arc.
\end{def}

\smallskip
\noindent\textbf{Semisimple groups and tori.}
For $G=\prod_jG_j$, use the product of the basic extensions of the simple
factors. It has central torus $\TT^r$, and a level tuple $(k_j)_j$
corresponds to the character
\[
 (z_1,\ldots,z_r)\longmapsto\prod_j z_j^{k_j}.
\]
Pushout by this character gives the circle extension with the prescribed
tuple of levels. One fixed circle extension and the character $z^k$
cannot encode arbitrary independent levels on different factors.
For a torus $T=F/\Lambda$, the integral level datum is a symmetric
form $B$ on the cocharacter lattice with $B(x,x)\in2\ZZ$, equivalently
$q_B(x)=B(x,x)/2\in H^4(BT,\ZZ)$. There is no distinguished universal
one-dimensional basic ray for all torus levels. In
Proposition~\ref{prop:TorusLevelLattice}, the form is the particular
restriction $B_k=k\basicform{\slot}{\slot}|_F$ inherited from the simple
group $G$; scaling that fixed form is the only sense in which the torus
extension there carries a scalar $k$.

\subsection{Vacuum representations and loop group nets}
\label{ssec:LoopGroupNets}
Fix the positive integer $k$. A \emph{positive-energy representation}
has a compatible projective rotation action with generator bounded below.
The \emph{level-$k$ vacuum representation} $\pi_k$ is the unitary
integrable vacuum representation of $\Lpc G_k$, with affine
highest weight $k\Lambda_0$ when viewed as a representation of the
  basic affine algebra. Its Hilbert space is $\Hil_{G_k}$ and its vacuum
  vector is $\Omega_k$. Conformal covariance and local-algebra properties
  require the corresponding representation-theoretic results
  \cites{PS1986, Wa, HenColimits}.

\begin{defi}[Loop group conformal net]
\label{def:LoopGroupConformalNet}
Given such a loop-group representation, we call the local assignment
\[
  \A_{G_k}(I)\ :=\ \pi_k(\Lpc_I G_k)''
     \ \subset\ \B(\Hil_{G_k}).
\]
the \emph{loop group conformal net} associated with $G$ and level $k$:
its local algebras come from the local loop-group representation.
\end{defi}
The domain of $\pi_k$ is the central extension. In expressions involving
an individual loop, $\pi_k(\gamma)$ means
a chosen unitary lift of a loop; changing the lift changes only its scalar.

\begin{rmk}[Conformal-net axioms are additional assertions]
  \label{rmk:LoopGroupNetAxioms}
  The terminology specifies the loop-group origin of the local algebras.
  In general it is not clear from that construction alone that every
  property required of a conformal net is satisfied. Those properties
  must be established for the group, level and representation in use;
  they are not part of the definition above. Isotony follows from the
  local subgroup inclusions. Locality uses the disjoint-support
  commutation property of the chosen central extension; covariance,
  the vacuum properties and the further analytic net properties require
  their own results.

  For general simple groups see the representation comparisons in
  \cite{HenColimits}*{Section~3}. The construction in~\cite{Wa} concerns
  $\SU(N)$. The simply laced level-one lattice realization is treated
  in~\cite{Bi2011}.
  Hyperfiniteness, type \threeone, Haag duality and strong additivity
  likewise require the applicable analytic statements; the single
  $\SU(N)$ reference does not supply them for every type; in every
  simply connected type they are supplied by
  Corollary~\ref{cor:LoopGroupNetAnalytic}. Later arguments
  invoking locality, factoriality or a vacuum-net realization are made
  in the setting where those properties have been established.
\end{rmk}

\begin{cor}[Analytic properties of loop group nets]
  \label{cor:LoopGroupNetAnalytic}
  Let $G=\prod_jG_j$ be compact, connected, simply connected and
  semisimple with simple factors $G_j$, and let $k=(k_j)_j$ be positive
  integers. Then $\A_{G_k}$ is a completely rational conformal net. It
  coincides with the net of the affine VOA
  $V_{G,k}=\bigotimes_jV_{\mathfrak g_j,k_j}$, and its category of
  finite-index DHR sectors is braided equivalent to the module category
  of $V_{G,k}$. Every interval algebra $\A_{G_k}(I)$ is the hyperfinite
  type \threeone factor.
\end{cor}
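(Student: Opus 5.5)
The plan is to assemble Corollary~\ref{cor:LoopGroupNetAnalytic} from the literature in three stages: the simple-group case at a single level, the tensor-product reduction for semisimple $G$, and then the interval-algebra statement.

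\emph{Stage one: $G$ simple, level $k$.} Here I would invoke the comparison of \cite{HenColimits}*{Section~3} (building on Gui's identification of loop group nets with affine VOA nets via smeared fields, and on \cite{Wa} for $\SU(N)$). This comparison identifies $\pi_k(\Lpc_IG_k)''$ with the von Neumann algebra generated by the smeared currents of $V_{\mathfrak g,k}$ supported in $I$. The route goes through exponentiation of the bounded smeared currents: for real $\xi\in\LL_I\LieG$, the closure of the smeared field $J(\xi)$ is self-adjoint, and $\e^{\ima J(\xi)}$ is a lift of $\pi_k(\exp\xi)$. Since $\LL_IG$ is generated by exponentials, the two local algebras coincide. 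The conformal covariance and vacuum axioms are then inherited from the VOA net. The local algebra is a factor by the Reeh--Schlieder property and irreducibility of the vacuum.

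\emph{Stage two: complete rationality and the braided equivalence.} For simple $G$, complete rationality of $\A_{G_k}$, meaning split property, strong additivity and finite $\mu$-index, is known in the $\SU(N)$ case by Xu and Wassermann. For general types I would deduce it from Gui's result that $V_{\mathfrak g,k}$ is strongly local and unitary; his theorems on complete unitarity identify $\mathrm{Rep}^f(\A_{V})$ with the unitary modular tensor category $\mathrm{Rep}(V_{\mathfrak g,k})$ (Huang's theorem) as braided categories. Given that identification, finite $\mu$-index follows from Kawahigashi--Longo--M\"uger, since the index equals the global dimension of this category, which is finite. The split property follows from the trace-class property of $q^{L_0}$ via the Buchholz--D'Antoni--Longo criterion, and strong additivity follows from M\"obius covariance plus the split property and finite index (Longo--Xu). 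The main obstacle is this step. One has to check that every type and level is covered by the cited comparison, in particular the identification of the DHR category rather than only of the nets. If Gui's range does not cover all types, my fallback is a conformal inclusion or coset argument: embed $\A_{G_k}$ as a finite-index subnet of a completely rational net, where finite-index subnets inherit complete rationality by Longo. For example, I would use level-rank or conformal embeddings into lattice or free-fermion nets.

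\emph{Stage three: semisimple $G$ and the interval algebras.} For $G=\prod_j G_j$, the basic extension is the product extension, and its pushout along $\prod_j z_j^{k_j}$ makes $\pi_k=\bigotimes_j\pi_{k_j}$. Hence $\A_{G_k}(I)=\bar\bigotimes_j\A_{(G_j)_{k_j}}(I)$, and $V_{G,k}=\bigotimes_jV_{\mathfrak g_j,k_j}$. Complete rationality is preserved under finite tensor products, and the DHR categories multiply as a Deligne product, which matches the module category of the tensor product VOA. Finally, each interval algebra of a conformal net with the split property is hyperfinite, since it embeds between type I factors with a conditional expectation. It is of type \threeone by Longo's result for M\"obius covariant nets (Bisognano--Wichmann together with Borchers). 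Being a hyperfinite \threeone factor with separable predual, it is then isomorphic to $R$ by the Connes--Haagerup uniqueness theorem.
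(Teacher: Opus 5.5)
Your outline follows the same plan as the paper's proof: identify $\A_{G_k}$ with the affine VOA net, factor over the simple components, take complete rationality and the braided equivalence from Gui, get split from the trace-class bound, hyperfiniteness from split, and type~III$_1$ from irreducibility of the vacuum. Stage two as you wrote it is circular, however. The identity $\mu_\A=\sum_i d_i^2$ of Kawahigashi--Longo--M\"uger is a theorem about nets already known to be completely rational, in particular with $\mu_\A<\infty$. It cannot be used to prove $\mu_{\A_V}<\infty$, because finiteness of the global dimension of the sector category gives no upper bound on the two-interval index. You then take strong additivity from Longo--Xu, whose input is exactly the finite $\mu$-index that has not been established. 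The braided equivalence cannot serve as the starting point either. A fully faithful braided functor from the modular category of $V_{\mathfrak g,k}$-modules only exhibits a modular subcategory of $\mathrm{Rep}^f(\A_V)$, and ruling out further finite-index sectors requires the very control of $\mu_{\A_V}$ you are trying to derive. So the chain ``equivalence $\Rightarrow$ finite index $\Rightarrow$ strong additivity'' does not close. What is missing is a result that asserts complete rationality itself.

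The repair is what the paper does. It takes the identification of the loop group net with the affine VOA net from \cite{CaKaLoWi2015}*{Theorem~8.3, Example~8.7}; your Stage one is the same step. It takes the tensor-product identification from \cite{CaKaLoWi2015}*{Corollary~8.2}. It then takes complete rationality together with the braided equivalence for finite tensor products of affine VOAs directly from \cite{GuiUnbounded}*{Theorem~I}, which also makes your Deligne-product bookkeeping in Stage three unnecessary. Your fallback does not cover the statement. Finite-index conformal inclusions into completely rational nets exist only for special $(G,k)$ and do not by themselves reach every type at every level. The obvious diagonal inclusion $\A_{G_k}\subset\A_{G_1}^{\otimes k}$ is not conformal for $k>1$ (the coset has positive central charge), so it has infinite index.

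A smaller point in Stage three: ``embeds between type~I factors with a conditional expectation'' does not prove hyperfiniteness. Every von Neumann algebra on a separable space lies in a type~I factor, and a conditional expectation onto it from an injective algebra exists exactly when it is already injective. Instead, exhaust $I$ by intervals $I_n\uparrow I$ with $\overline{I_n}\subset I_{n+1}$, choose split type~I factors $\A(I_n)\subset N_n\subset\A(I_{n+1})$, and use $\A(I)=\bigvee_nN_n$. This is the content of the paper's appeal to \cite{DLR2001}*{Proposition~3.1}.
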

\begin{proof}
  The vacuum representation of the product extension is the tensor
  product of the component vacuum representations, so
  $\A_{G_k}=\bigotimes_j\A_{(G_j)_{k_j}}$. For simple $G_j$ the affine
  VOA $V_{\mathfrak g_j,k_j}$ is strongly local and its net is the loop
  group net $\A_{(G_j)_{k_j}}$ \cite{CaKaLoWi2015}*{Theorem~8.3,
  Example~8.7}. Tensor products of strongly local VOAs are strongly local,
  with net the tensor product of the component nets
  \cite{CaKaLoWi2015}*{Corollary~8.2}; hence $\A_{V_{G,k}}=\A_{G_k}$.
  Complete rationality and the braided equivalence hold for finite tensor
  products of affine VOAs \cite{GuiUnbounded}*{Theorem~I}. The structural
  consequences of complete rationality are those of~\cite{KaLoMg2001}.

  The vacuum representation is irreducible, so each $\A_{G_k}(I)$ is a
  type \threeone factor \cite{DLR2001}*{Proposition~2.4}. The
  $L_0$-eigenspaces of the vacuum module have dimension at most the
  coefficients of $\prod_{m\ge1}(1-q^m)^{-\dim G}$, so
  $\Tr e^{-\beta L_0}<\infty$ for all $\beta>0$ and the net is split
  \cite{DLR2001}*{Theorem~3.2}; split also follows from diffeomorphism
  covariance alone \cite{MoTaWe2016}*{Theorem~5.4}. Split nets have
  hyperfinite interval algebras, being generated by an increasing
  sequence of intermediate type~I factors
  \cite{DLR2001}*{Proposition~3.1}.
\end{proof}

\subsection{The maximal-torus restriction and its lattice}
\label{ssec:TorusLevelLattice}
The inclusion $T\subset G$ gives $\LL T\subset\LL G$ at every level.
It is the \emph{pullback extension} of $\LL T$, not a new group called
``$G_k$'', that records the level on the torus.

\begin{prop}[Torus restriction of the level-$k$ extension]
  \label{prop:TorusLevelLattice}
  Let $G$ be compact connected simply connected simple, let
  $T=F/\Qvee$ be a maximal torus, and let $k\in\ZZ_{>0}$.
  Pullback of the level-$k$ extension gives a central extension
  \[
    1\longrightarrow\TT\longrightarrow\Lpc T_k
      \longrightarrow\LL T\longrightarrow1,
    \qquad \Lpc T_k\subset\Lpc G_k.
  \]
  Its integral level form on the cocharacter lattice is
  $B_k(x,y)=(x,y)_k=k\basicform{x}{y}$ on $\Qvee$. Consequently
  \[
    \Lambda_k=(\Qvee,B_k)
  \]
  is a positive even lattice, in every simple type. Equivalently one
  can use $L_k=\sqrt{k}\,\Qvee\subset F$ with the unscaled basic form.
  The torus map
  \[
    F/\Qvee\longrightarrow F/L_k,\qquad
    x+\Qvee\longmapsto\sqrt{k}x+L_k
  \]
  identifies the standard level-$k$ torus extension with the lattice
  torus extension for $L_k$. Its vacuum representation defines the
  lattice net $\A_{L_k}$, and its companion obstruction pulls back to
  the quadratic form $q_k(x)=k\basicform{x}{x}/2$ on $\Qvee$.
\end{prop}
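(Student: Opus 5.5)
The plan is to restrict the Lie-algebra cocycle $\kappa_k$ of \eqref{eq:AffineLevelCocycle} to $\LL\LieT$ and read off the integral data on the cocharacter lattice from the commutator of winding loops. First, the pullback $\Lpc T_k:=p^{-1}(\LL T)\subset\Lpc G_k$ is a central extension of $\LL T$ by $\TT$, since $\TT$ is central in $\Lpc G_k$ and $p$ restricted to the preimage is onto $\LL T$. Restricted to $\LL\LieT=C^\infty(\SOne,\LieT)$, the bracket is abelian, so the extended bracket reduces to
\[
 [\xi,\eta]=\frac{k}{2\pi}\int_0^{2\pi}\basicform{\xi(\theta)}{\eta'(\theta)}\dd\theta\; c .
\]
The level form is then recovered from the component group $\pi_0(\LL T)\cong\Qvee$. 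Write loops as $\gamma_x(\theta)=\exp_G(x\theta)$ for $x\in\Qvee$ (the normalization $\exp_G(2\pi\,\cdot)$ makes this $2\pi$-periodic, matching $T=F/\Qvee$). The group commutator of lifts of $\gamma_x$ and $\gamma_y$ is $\exp(2\pi\ima\,k\basicform{x}{y})$ up to the standard sign convention, and the commutator of a winding loop with $\exp(\xi)$, $\xi\in\LL\LieT$ contractible, is $\exp\bigl(\ima k\basicform{x}{\hat\xi_0}\bigr)$-type, determined by the same form. I would cite \cite{PS1986}*{Chapter~4} for the fact that such an abelian extension of $\LL T$ is classified exactly by this bilinear form $B$ on $\pi_0=\Qvee$ together with the continuous cocycle on the identity component, with $B(x,x)\in2\ZZ$ forced by the existence of the group extension (or, here, directly because $\Qvee$ is even under the basic form, as recorded in Section~\ref{sec:Notation}). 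This gives $B_k=k\basicform{\slot}{\slot}$. Positivity and evenness of $\Lambda_k$ are immediate from the normalization section: coroot norms lie in $\{2,2r\}$ and pairings are integral, scaled by $k\ge1$.

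Next, the map $x\mapsto\sqrt k\,x$ is an isometry $(\Qvee,B_k)\to(L_k,\basicform{\slot}{\slot})$, and it induces $F/\Qvee\to F/L_k$ and $\LL(F/\Qvee)\to\LL(F/L_k)$. Under it, the cocycle $k\basicform{\xi}{\eta'}$ becomes $\basicform{\sqrt k\xi}{\sqrt k\eta'}$. So both the identity-component cocycle and the winding commutator form coincide with those of the lattice torus extension for $L_k$ with unscaled form. By the classification just invoked, the two extensions are isomorphic, compatibly with the central $\TT$ and with rotations.

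The positive-energy vacuum representation of this lattice extension is the Fock-space construction of \cite{DoXu2006}, that is, the lattice net $\A_{L_k}$. Here I would check that the rotation generator is intertwined: the Sugawara $L_0$ of the torus agrees under the isometry. The obstruction statement then follows from the companion paper \cite{BiKa}, which identifies the obstruction with $q(x)=\basicform{x}{x}/2$ on $L_k$; pulled back along $\sqrt k$, this is $q_k(x)=k\basicform{x}{x}/2$.

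The main obstacle I expect is the classification step: making sure the pullback extension is determined up to isomorphism by $B_k$, including the sign/commutator character on $\pi_0$, since abelian extensions of $\LL T$ carry an extra $\mathrm{Hom}(\Qvee\otimes\Qvee,\TT)$ ambiguity. I would fix this by exhibiting the explicit Segal cocycle $\exp(\ima\pi kB(x,y))$-type formula and checking that the restriction of the $G$-cocycle matches it on winding loops. If that fails, I would instead compare via \cite{Hen17}*{Theorem~6}, whose torus restriction of $kc_G$ is $q_k$ by definition.
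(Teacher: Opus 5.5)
Your plan is essentially the paper's proof: restrict $\kappa_k$ to $\LL\LieT$ to read off $B_k$, take the integral statement from \cite{Hen17}*{Theorem~6} (your fallback, which the paper uses directly) and the full torus cocycle from \cite{PS1986}*{Section~4.8} and \cite{DoXu2006}*{Section~3}, then transport along $x\mapsto\sqrt{k}\,x$ and pull back the companion anomaly. One correction: the commutator of lifted winding loops is the sign $(-1)^{k\basicform{x}{y}}$, not $\exp(2\pi\ima k\basicform{x}{y})$ (which is identically $1$ on $\Qvee$), so it sees $B_k$ only modulo $2$ and is exactly the extra datum supplied by \cite{PS1986}*{Section~4.8}; the integral form itself is carried by the winding/constant-mode pairing and by $\kappa_k$, as you also note.
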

\begin{proof}
  The pullback group is $p^{-1}(\LL T)$. Its loop-algebra cocycle is
  \eqref{eq:AffineLevelCocycle} restricted to $\LL\LieT$, so the
  invariant form is $B_k$. At the integral level, naturality of the
  characteristic-form identification~\cite{Hen17}*{Theorem~6} restricts
  $kc_G$ to the even form $B_k$ on $\Qvee$. The standard torus
  extension is the lattice extension, with winding lattice $\Qvee$
  and this level form; its full cocycle, including constant and winding
  terms, is described in~\cite{PS1986}*{Section~4.8} and
  \cite{DoXu2006}*{Section~3}. The assertion concerns that full standard
  extension, not just the Heisenberg cocycle of winding-zero loops.

  Evenness of the basic coroot lattice was checked above, so scaling
  by the positive integer $k$ preserves integrality, evenness and
  positivity. The map $x\mapsto\sqrt{k}x$ is an isometry from
  $(\Qvee,B_k)$ to $L_k$, and identifies their torus extensions.
  The lattice vacuum construction gives $\A_{L_k}$
  \cites{DoXu2006, Bi2011}. The companion anomaly for that lattice is
  $y\mapsto\basicform{y}{y}/2$; its pullback is $q_k$.
\end{proof}

This proposition identifies the \emph{extension and lattice data} of
the torus restriction. It does not equate the full representation
$\pi_k|_{\Lpc T_k}$ with one irreducible lattice vacuum:
torus sectors and multiplicities may occur in the restriction. Comparing
the scalar associator on the nonabelian factor requires compatible local
defects, as made explicit in Proposition~\ref{prop:ReduceCartan}.
In simply laced types at level one the compatible Frenkel--Kac--Segal
construction identifies the whole net with $\A_{\Qvee}$
\cite{Bi2011}*{Section~3.5}; this is a stronger, special statement.

\subsection{\texorpdfstring{$G$-kernels}{G-kernels} and Sutherland obstruction}
\label{ssec:Gkernels}
Following Sutherland~\cites{Su1980, Su1980-II} (see
also~\cite{BiKa}*{Section~2}), a \emph{$G$-kernel} on
a von Neumann algebra $M$ is a monomorphism $\theta\colon G\to\Out(M)$ that
admits a Borel lift $\alpha \colon G\to\AUT(M)$. Given a Borel lift
$\alpha_g$, set
\[
  \alpha_g\,\alpha_h\ =\ \Ad u(g,h)\,\alpha_{gh},
  \qquad u(g,h)\in\U(M).
\]
Choose the defect unitaries Borel measurably. When $M$ is a factor, define
the scalar associator by
\[
 \alpha_g(u(h,l))u(g,hl)=\omega(g,h,l)u(g,h)u(gh,l).
\]
The class
$[\omega]\in H^{3}_{\Borel}(G,\TT)$ is independent of the lift. This class is
the \emph{Sutherland obstruction} of the kernel. The characteristic-class identification
\begin{equation}
  \label{eq:Transgression}
  \tau\colon H^{4}(BG,\ZZ)\ \xrightarrow{\ \sim\ }\ H^{3}_{\Borel}(G,\TT),
\end{equation}
identifies Sutherland's $3$-cocycle with a characteristic class in
$H^{4}(BG,\ZZ)$ (see~\cite{WaWo2015}*{Remark~4.16} and the cohomology
discussion in~\cite{BiKa}). It is distinct from geometric transgression
to ordinary $H^3(G,\ZZ)$.

For a simply connected connected simple compact $G$, the classical computation
gives
\begin{equation}
  \label{eq:H4simple}
  H^{4}(BG,\ZZ)\ \cong\ \ZZ,
\end{equation}
the integer being the \emph{level}~$k$; for semisimple $G=\prod_i G_i$ with
simple factors $G_i$, $H^{4}(BG,\ZZ)\cong\bigoplus_i\ZZ$.

\begin{thm}[Integral degree-four Cartan restriction, Henriques]
  \label{thm:CartanRestriction}
  For a compact connected Lie group $G$ with maximal torus $T$ and Weyl
  group $\Weyl$, restriction identifies
  \[
    H^4(BG,\ZZ)\ \cong\ H^4(BT,\ZZ)^{\Weyl}.
  \]
  If $\Lambda$ is the cocharacter lattice of $T$, then
  $H^4(BT,\ZZ)=\Sym^2(\Lambda^*)$ identifies with even integral symmetric
  bilinear forms $B$ on $\Lambda$, or quadratic forms $q(x)=B(x,x)/2$.
\end{thm}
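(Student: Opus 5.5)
The plan is to use the Serre spectral sequence of the fibration $G/T\to BT\to BG$, which gives injectivity and the image in low degrees. For surjectivity onto the invariants I would reduce to tori and simply connected simple groups, using an explicit generator in the simple case. The second identification in the statement, of $H^4(BT,\ZZ)$ with even symmetric forms on $\Lambda$, is elementary. One has $BT\simeq(\mathbb{CP}^\infty)^r$, so $H^*(BT,\ZZ)=\Sym^*(\Lambda^*)$ with $\Lambda^*=H^2(BT,\ZZ)$ in degree two. A degree-two polynomial $q$ on $\Lambda$ is integral exactly when its polarization $B(x,y)=q(x+y)-q(x)-q(y)$ is an integral symmetric form with $B(x,x)=2q(x)\in2\ZZ$. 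Weyl invariance of the image of restriction is also immediate: $n\in N_G(T)$ acts on $BT$ over $BG$, and inner automorphisms act trivially on $H^*(BG)$ up to homotopy.

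\textbf{Injectivity.} Since $G$ is connected, $BG$ is simply connected, so the local coefficients $H^q(G/T,\ZZ)$ are trivial. The Bruhat cell decomposition puts $H^*(G/T,\ZZ)$ in even degrees and makes it free. Hence $E_2^{p,q}=H^p(BG;H^q(G/T))$ vanishes for odd $q$, and also for $p=1$ because $H^1(BG)=0$. The only differentials that could hit $E^{4,0}$ come from $E^{2,1}$, $E^{1,2}$ and $E^{0,3}$, and all of these vanish. So $E_2^{4,0}=E_\infty^{4,0}$, and the edge map $H^4(BG)\to H^4(BT)$ is injective. Rationally, Borel's theorem $H^*(BG,\mathbb{Q})\cong H^*(BT,\mathbb{Q})^{\Weyl}$ shows that the image has finite index in $H^4(BT,\ZZ)^{\Weyl}$.

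\textbf{Surjectivity onto invariants.} This integral saturation is the main obstacle, and I would not try to get it from the spectral sequence alone. Instead I would reduce structurally. Write $G=(Z_0\times\tilde G_{ss})/Z$ with $Z_0$ a torus, $\tilde G_{ss}=\prod_jG_j$ simply connected, and $Z$ finite central.
\begin{enumerate}
\item For a torus the statement is trivial, since $\Weyl=1$.
\item For simply connected simple $G_j$, one has $H^4(BG_j,\ZZ)\cong H^3(G_j,\ZZ)\cong\ZZ$ by transgression, because $G_j$ is $2$-connected. The $\Weyl$-invariant even forms on $\Qvee$ form the ray $\ZZ\basicform{\slot}{\slot}$: irreducibility of the Weyl representation gives a one-dimensional space of invariants, and the minimal even multiple is the basic form, since short coroots have squared length $2$. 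To see that the generator restricts to the basic form rather than a multiple, restrict further along the $\SU(2)$ of a long root. For $\SU(2)$ the class $c_2$ restricts to $x^2$ on the torus, which is $\basicform{x}{x}/2$ for $x=\alpha^\vee$ of squared length $2$.
\item Products follow from the Künneth theorem in degrees $\le4$, using that $H^{\mathrm{odd}}$ vanishes in low degrees and everything involved is free.
\item For the finite quotient, $B(Z_0\times\tilde G_{ss})\to BG$ is a fibration with fibre $BZ$. A $\Weyl$-invariant form $B$ on the cocharacter lattice $\Lambda$ of $T$ restricts to an invariant form on the finite-index sublattice $\Lambda_0\oplus\Qvee$. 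By step (3) this restriction comes from a class $c$ on the cover. I then need to show that $c$ descends to $BG$ exactly when $B$ is integral and even on all of $\Lambda$.
\end{enumerate}
For step (4) I would first try the Serre spectral sequence of $BZ\to B(Z_0\times\tilde G_{ss})\to BG$. Failing that, I would compare the transgression obstructions in $H^*(BZ)$ with the pairing of $B$ on $\Lambda/(\Lambda_0\oplus\Qvee)$. This is precisely Henriques's argument in~\cite{Hen17}*{Theorem~6}, and for the applications in this paper only the simply connected case, steps (1)--(3), is actually needed.
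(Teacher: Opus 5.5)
Your identification of $H^4(BT,\ZZ)$ with even forms and your injectivity argument are correct. Surjectivity onto the invariants, however, is not established.

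In step (2) the long-root restriction is circular as written. Suppose the generator of $H^4(BG_j,\ZZ)\cong\ZZ$ restricts to $m\,q$ with $q(x)=\basicform{x}{x}/2$. Then its image in $H^4(B\SU(2),\ZZ)$ is $\pm m\,c_2$; incidentally $c_2|_{BT}=-x^2$, not $x^2$. You can conclude $m=\pm1$ only if $H^4(BG_j)\to H^4(B\SU(2)_\alpha)$ is onto, i.e.\ only if the long-root $\SU(2)_\alpha$ generates $\pi_3(G_j)$. That is Bott's theorem (Dynkin index one). It is a real topological input that you neither state nor prove. It also cannot be replaced by Chern classes of representations, since for $E_8$ no representation has Dynkin index one.

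Step (4) is only a plan. So the case of non-simply-connected $G$, which the statement explicitly includes, is left open. The paper itself simply cites Henriques's Theorem~6 for all compact connected $G$.

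Both gaps close if you push your own spectral sequence one step further: contrary to your expectation, it does give saturation.
\begin{itemize}
\item In total degree $4$ the only $E_\infty$-terms besides $E_\infty^{4,0}$ are $E_\infty^{2,2}$ and $E_\infty^{0,4}$.
\item No nonzero differential enters $E^{2,2}$, because its only possible source $E_2^{0,3}$ vanishes. Hence $E_\infty^{2,2}\subseteq H^2(BG)\otimes H^2(G/T)$. Also $E_\infty^{0,4}\subseteq H^4(G/T)$.
\item Both are free, since $H^*(G/T,\ZZ)$ is free and $H^2(BG,\ZZ)\cong\Hom(\pi_1G,\ZZ)$.
\item Therefore $H^4(BT)/H^4(BG)$, which contains $E_\infty^{2,2}$ with quotient $E_\infty^{0,4}$, is torsion-free. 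So the image of restriction is saturated in $H^4(BT,\ZZ)$.
\item Borel's theorem makes this image of finite index in $H^4(BT,\ZZ)^W$, so it equals $H^4(BT,\ZZ)^W$.
\end{itemize}
This proves the statement for every compact connected $G$ at once. It makes the structural reduction (1)--(4) unnecessary. It also yields the index-one property of the long-root $\SU(2)$ as a corollary, rather than requiring it as an input.
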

\begin{proof}
  This is \cite{Hen17}*{Theorem~6}; its proof includes the integral
  identification and naturality under group homomorphisms. In particular,
  this degree-four statement does not require $G$ to be simply connected.
\end{proof}

\section{Outer homomorphisms from jump loops}
\label{sec:kernels}

Throughout the section, $G$ and $T\subset G$ are as in
Section~\ref{sec:Notation}, and $k\in\NN$ is fixed.
\subsection{The \texorpdfstring{$G$}{G}-homomorphism from jump loops}
\label{ssec:Gkernel}

Fix $c<a<b<d$ on $\SOne$ and put $I=(a,b)$. Choose compact transition
regions in $I$ and in the complementary arc from $d$ to $c$. For $g\in G$,
choose a smooth path $p_g$ from $e$ to $g$, constant near both endpoints.
Following $p_g$ in the first transition region, and following it backwards
in the second, gives a smooth circle loop $\gamma_g$ equal to $e$ on
$(c,a)$ and to $g$ on $(b,d)$. Thus the return to $e$ is explicitly outside
$I$; we do not claim that a circle loop has only one transition.

The paths may be chosen Borel measurably: cover $G$ by finitely many
exponential coordinate neighborhoods of points $g_j$, choose a fixed path
from $e$ to each $g_j$, and concatenate it with the short exponential path
from $g_j$ to $g$. A fixed smooth reparametrization constant near the
endpoints makes the concatenation smooth. A Borel partition subordinate
to the cover selects one such path for each $g$; take $\gamma_e=1$.
Define, on $M_k=\A_{G_k}(I)$,
\[
  \alpha_{g,k}=\Ad U_{g,k}|_{M_k},\qquad U_{g,k}=\pi_k(\gamma_g),
\]
where unitary representatives are chosen Borel measurably using local
sections of the central extension. Scalar choices do not affect $\alpha_{g,k}$.
Then set
\begin{equation}
  \label{eq:GkernelDef}
  \theta_{g,k}\ :=\ [\alpha_{g,k}]\ \in\ \Out(M_k).
\end{equation}
\begin{prop}[Local composition defects]
  \label{prop:JumpHomomorphism}
  The maps $\alpha_{g,k}$ are normal automorphisms of $M_k$ and form a
  Borel lift of a homomorphism $\theta_k\colon G\to\Out(M_k)$.
  This outer homomorphism is independent of the chosen paths and returns
  with the prescribed constant values near the endpoints of $I$.
\end{prop}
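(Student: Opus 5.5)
We will prove the three assertions (automorphism, homomorphism up to inner, independence of choices) by reducing each to the locality of the loop group net.

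\emph{Normality and automorphism property.} $\Ad U_{g,k}$ is a normal $*$-automorphism of $\B(\Hil_{G_k})$, so it suffices to show $U_{g,k}M_kU_{g,k}^*=M_k$. For $\gamma\in\LL_IG$, the loop $\gamma_g\gamma\gamma_g^{-1}$ is again supported in $I$: outside $I$ the factor $\gamma$ is trivial. Hence $U_{g,k}\pi_k(\tilde\gamma)U_{g,k}^*$ equals, up to a central scalar, $\pi_k$ of an element of $\Lpc_IG_k$. This holds because conjugation in $\Lpc G_k$ covers conjugation in $\LL G$ and preserves $p^{-1}(\LL_IG)$. Taking generated von Neumann algebras gives $U_{g,k}M_kU_{g,k}^*\subseteq M_k$. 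Applying the same argument to $\gamma_g^{-1}$, which also normalizes $\LL_IG$, gives equality. Borel measurability of $g\mapsto\alpha_{g,k}$ follows from the Borel choice of paths together with continuity of $\pi_k$ in the smooth topology on loops, using local Borel sections of $p$.

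\emph{Homomorphism up to inner.} Consider the loop $\delta_{g,h}=\gamma_g\gamma_h\gamma_{gh}^{-1}$. It equals $e$ on $(c,a)$, and on $(b,d)$ it equals $g\,h\,(gh)^{-1}=e$. Its nontrivial parts therefore lie in the transition region inside $I$ and in the return region in the arc from $d$ to $c$, which is contained in $I'$.

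The main obstacle is splitting $\delta_{g,h}$ as a product of a loop in $\LL_IG$ and a loop in $\LL_{I'}G$. This needs $\delta_{g,h}$ to be identically $e$ on an open neighbourhood of the endpoints of $I$. That holds because every $p_g$ is constant near its endpoints, so each $\gamma_g$ is locally constant ($e$ or $g$) near $a,b$ and near $c,d$. We then write $\delta_{g,h}=\delta^I\delta^{I'}$ with smooth factors, each obtained by extending by $e$. Lifting to $\Lpc G_k$ and applying $\pi_k$, we get $U_{g,k}U_{h,k}=\lambda\,\pi_k(\delta^I)\pi_k(\delta^{I'})U_{gh,k}$ for a scalar $\lambda$. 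By locality of the net (the disjoint-support commutation of Remark~\ref{rmk:LoopGroupNetAxioms}, available via Corollary~\ref{cor:LoopGroupNetAnalytic}), $\pi_k(\delta^{I'})\in\A_{G_k}(I')\subseteq M_k'$. Therefore
\[
\alpha_{g,k}\alpha_{h,k}=\Ad u(g,h)\,\alpha_{gh,k},\qquad u(g,h)=\pi_k(\delta^I)\in\U(M_k).
\]
This shows $\theta_k$ is a homomorphism, with $\theta_{e,k}=1$ because $\gamma_e=1$. Borel measurability of $u$ follows from a Borel choice of the splitting cutoff, using one fixed profile function (Definition~\ref{Profile}) adapted to the regions.

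\emph{Independence.} Suppose $\gamma_g'$ comes from other paths or other transition regions, with the same constant values $e$ on $(c,a)$ and $g$ on $(b,d)$ near the endpoints of $I$. Then $\gamma_g'\gamma_g^{-1}$ is trivial near the endpoints and splits in the same way as above. Consequently $\Ad\pi_k(\gamma'_g)|_{M_k}$ differs from $\alpha_{g,k}$ by an inner automorphism of $M_k$, so $\theta_{g,k}$ is unchanged.
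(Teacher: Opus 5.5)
Your proposal is correct and follows essentially the same route as the paper. Both arguments get the automorphism property from normalization of $\LL_IG$ under pointwise conjugation by $\gamma_g$ and by its inverse. Both split $\gamma_g\gamma_h\gamma_{gh}^{-1}$ into an $I$-part, which gives the defect, and an $I'$-part, which commutes with $M_k$ by locality. Both prove independence by applying the same splitting to the quotient of two profiles.

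One small quibble: no cutoff or profile function is needed for the splitting. The defect loop is already $e$ near $a$ and $b$, so the two factors come from restricting and extending by $e$, as you note earlier. Borel measurability of $u(g,h)$ therefore follows directly from the Borel choices of paths and sections.
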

\begin{proof}
  Pointwise conjugation by $\gamma_g$ preserves $\LL_I G$, so conjugation
  by $U_{g,k}$ preserves the von Neumann algebra it generates. Phases in
  the projective representation do not alter this inclusion. Applying the
  same argument to the inverse loop proves equality and hence invertibility;
  no identity $\gamma_g^{-1}=\gamma_{g^{-1}}$ is required.

  Put $\eta(g,h)=\gamma_g\gamma_h\gamma_{gh}^{-1}$. This loop is equal
  to $e$ on neighborhoods of both endpoints of $I$, but need not be $e$
  throughout $I'$. Define $\eta_I$ to agree with $\eta$ on $I$ and with
  $e$ on $I'$, and define $\eta_{I'}$ conversely. Both are smooth, and
  \[
    \eta=\eta_I\eta_{I'},\qquad
    \eta_I\in\LL_I G,\quad\eta_{I'}\in\LL_{I'}G.
  \]
  Consequently, up to a scalar,
  $\pi_k(\eta)=\pi_k(\eta_I)\pi_k(\eta_{I'})$. Locality makes
  $\pi_k(\eta_{I'})$ commute with $M_k$. Choosing Borel representatives
  \[
    u_k(g,h)=\pi_k(\eta_I(g,h))\in\U(M_k)
  \]
  gives, on $M_k$,
  \[
    \alpha_{g,k}\alpha_{h,k}=\Ad u_k(g,h)\,\alpha_{gh,k}.
  \]
  Passing to outer classes proves the homomorphism property.
  The Borel profile and section choices give Borel automorphism and defect
  maps in the usual strong operator topology.

  If $\gamma_g$ is replaced by another allowed profile, their quotient
  is again $e$ near the endpoints of $I$ and admits the same decomposition.
  Its $I$-part changes the automorphism by an inner automorphism of $M_k$;
  its $I'$-part acts trivially there. This proves independence in $\Out(M_k)$.
\end{proof}

\begin{rmk}[Faithfulness]
  \label{rmk:Faithfulness}
  Proposition~\ref{prop:JumpHomomorphism} proves a homomorphism, not a
  monomorphism. To call $\theta_k$ a $G$-kernel under our definition one
  must also prove that $\alpha_{g,k}$ inner implies $g=e$. This does not
  follow from scalarity of the associator or from flank triviality; it is
  proved in Theorem~\ref{thm:AnomalyAllTypes} from the nonvanishing of the
  class, Lemma~\ref{lem:NormalSubgroups} and
  Proposition~\ref{prop:CentreOuter}.
\end{rmk}

\subsection{The cut circle: a fundamental domain for the jump}
\label{ssec:FundamentalDomain}

The two-transition picture above hides the useful choice. Fix the
\emph{cut point} $t$ in the complementary arc $(d,c)$ and write
$\SOne_t=\SOne\setminus\{e^{\ima t}\}$ for the cut circle, identified with the
interval $(t,t+2\pi)$. A \emph{jump profile at $g$} is then a smooth map
$\gamma\colon\SOne_t\to G$ which equals $e$ near $t$ and near $a$, equals $g$
near $b$, and is otherwise a path from $e$ to $g$ supported in $I$: the return
to $e$ is absorbed at the cut rather than carried out inside $\SOne$.

This is exactly the device used for the torus kernel in~\cite{BiKa}: there
$m\in C^\infty(\SOne,F)$ has $\supp m\subset I$, a cutoff $f$ is supported in
$\tilde I\supset I$ with $f|_I=1$ and $f(t)=0$, and the primitive is taken
\emph{from the cut},
\[
  L(x)\ =\ f(x)\int_{t}^{x}m(y)\,\dd y ,
\]
the integral being taken in the contractible image of $(t,x)$ in $\SOne_t$.
Because $f$ vanishes near $t$, $L$ is a smooth function on the whole circle
with no jump, and $\Ad W([L])$ acts on the current net by the shift
\begin{equation}
  \label{eq:GaugeAutFormula}
  \alpha_m\big(W([g])\big)\ =\ \e^{\ima\int_{\SOne}\langle g(\theta),m(\theta) \rangle_F \frac{\dd\theta}{2\pi}}\,W([g]).
\end{equation}
Three facts, all proved in~\cite{BiKa}.

\begin{rmk}
  \label{lem:CutIndependence}
  The automorphism~\eqref{eq:GaugeAutFormula} depends only on the current shift
  $m$, not on the cut point $t$, the cutoff $f$, or the additive constant in
  the primitive. Changing these changes the chosen multiplier by a coboundary
  and the automorphism by an inner one.
  Formula~\eqref{eq:GaugeAutFormula} is expressed in $m$ alone and details can be found in \cite{BiKa}.
\end{rmk}

\begin{lem}[Zero shift is inner; \cite{BiKa}, `lem:InnerAuto'']
  \label{lem:ZeroShiftInner}
  If $\supp m\subset I$ and $\int_I m=0$, then $\alpha_m$ is inner on
  $\A(I)$.
\end{lem}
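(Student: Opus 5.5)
The plan is to realize $\alpha_m$ as $\Ad W([L])$ for a \emph{local} Weyl unitary, i.e.\ one whose symbol has support inside $I$. Write $I=(a,b)$ and let $t$ be the cut point in the complementary arc. Because $\supp m\subset I$, the primitive $\ell(x)=\int_t^x m(y)\,\dd y$ vanishes for $x$ between $t$ and $a$. For $x$ between $b$ and $t+2\pi$ it equals the total integral $\int_I m$, which is $0$ by hypothesis. Hence $\ell$ is a smooth $F$-valued function on the whole circle (no jump at the cut), with $\supp\ell\subset I$ and $\ell'=m$. We therefore choose the cutoff $f\equiv 1$ on a neighbourhood of $\supp\ell$, or simply take $L=\ell$. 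By Remark~\ref{lem:CutIndependence} this changes the implementer only by a coboundary phase and the automorphism only by an inner one. Since we will show the resulting automorphism is itself inner, this choice costs nothing.

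Next, with $L=\ell$ supported in $I$, the Weyl unitary $W([L])$ of the current net is localized in $I$. In the $U(1)^n$ current net, $W(f)\in\A(J)$ whenever $\supp f\subset J$. Hence $W([L])\in\A(I)$, and $\Ad W([L])|_{\A(I)}$ is inner. It remains to check that $\Ad W([L])$ really acts by the formula \eqref{eq:GaugeAutFormula}. We use the Weyl relations
\[
  W(L)W(g)W(L)^{*}=\e^{\ima\sigma(L,g)}W(g),\qquad \sigma(L,g)=\int\langle L',g\rangle\,\frac{\dd\theta}{2\pi}
\]
up to the normalization fixed in \cite{BiKa}. Since $L'=m$, the phase is $\int\langle g,m\rangle\,\dd\theta/2\pi$. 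This is exactly the phase in \eqref{eq:GaugeAutFormula} for every test function $g$ supported in $I$. The Weyl unitaries $W([g])$ with $\supp g\subset I$ generate $\A(I)$, so $\alpha_m=\Ad W([L])$ on $\A(I)$.

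The main obstacle is the sign and normalization convention of the symplectic form $\sigma$, together with its compatibility with the shift formula as written in \cite{BiKa}. If the convention there places the derivative on $g$ rather than on $L$, integration by parts on the circle fixes this. That step is legitimate precisely because $L$ is now a genuine smooth periodic function, which is where $\int_I m=0$ is used again. A secondary point is the case where $\A(I)$ is the lattice factor rather than the current factor. There $\alpha_m$ is defined by extending the same $\Ad W([L])$. Since $W([L])$ lies in $\A_F(I)\subset\A_L(I)$, innerness persists on the larger algebra.
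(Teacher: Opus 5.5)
Your argument is correct and is the paper's proof. Since $m$ vanishes on $(t,a)$, your primitive from the cut coincides with the paper's $x\mapsto\int_a^x m$, which is supported in $I$ when $\int_I m=0$, so its Weyl unitary lies in $\A(I)$ and implements $\alpha_m$; the appeal to Remark~\ref{lem:CutIndependence} is unnecessary, because the cutoff already satisfies $f|_I=1$ and hence $L=f\ell=\ell$ exactly.
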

\begin{proof}
  Then $x\mapsto\int_a^x m$ is already supported in $I$, so its Weyl unitary
  lies in $\A(I)$ and implements $\alpha_m$.
\end{proof}

\begin{cor}[The outer class is the charge]
  \label{cor:ClassIsCharge}
  Two shifts supported in $I$ give the same outer automorphism of $\A(I)$
  \emph{iff} they have the same total charge $\int_I m$.
\end{cor}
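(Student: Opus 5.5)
The statement has two directions. The ``if'' direction is a reduction to Lemma~\ref{lem:ZeroShiftInner}, applied to the difference of the two shifts. The ``only if'' direction needs an invariant that detects the charge, and I would take it from a charged element that commutes with $\A(I)$.

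\emph{If.} Let $m_1,m_2$ be supported in $I$ with $\int_I m_1=\int_I m_2$.
\begin{itemize}
\item Formula~\eqref{eq:GaugeAutFormula} is additive in $m$: the phase $\int\langle g,m\rangle_F$ is linear in $m$. Hence $\alpha_{m_1}=\alpha_{m_1-m_2}\circ\alpha_{m_2}$ on the Weyl generators, and therefore on $\A(I)$.
\item The difference $m_1-m_2$ is supported in $I$ and has zero total charge.
\item By Lemma~\ref{lem:ZeroShiftInner}, $\alpha_{m_1-m_2}$ is inner on $\A(I)$.
\item By Remark~\ref{lem:CutIndependence}, changing the cut data only alters each $\alpha_{m_i}$ by an inner automorphism, so the outer classes agree.
\end{itemize}

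\emph{Only if.} I argue by contrapositive. Suppose $q=\int_I(m_1-m_2)\neq0$, and suppose for contradiction that $\alpha_m$, with $m=m_1-m_2$, equals $\Ad u$ on $\A(I)$ for some unitary $u\in\A(I)$.
\begin{itemize}
\item Since $\A(I)$ is a factor, $u$ is unique up to a scalar.
\item The implementer $W([L])$ of $\alpha_m$ is built from the primitive taken from the cut. Its current shift is supported in $I$, and outside $I$ it is constant: $0$ on the side of the cut and $q$ on the other side.
\item Let $\A(I')$ act on $\A(I)$ through the commutant. Then $\Ad u$ acts trivially on $\A(I)'$.
\item Compare $\Ad W([L])$ and $\Ad u$ on a Weyl operator $W([g])$ with $g$ supported in $I'$ near the endpoint $b$. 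There the primitive $L$ equals the constant $q$. Formula~\eqref{eq:GaugeAutFormula} extended to the whole net multiplies $W([g])$ by $\e^{\ima\langle\int g,\,q\rangle}$, which is nontrivial for a suitable $g$ with $\int g\neq0$ paired against $q$.
\end{itemize}

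This alone is not yet a contradiction, since $W([L])$ and $u$ can differ by something outside $\A(I)$. To get one, I would pass to the charged generators of the lattice net, or to the canonical extension of $\alpha_m$ to $\A(\tilde I)$ for an interval $\tilde I\supset I$. There $W([L])u^*$ commutes with $\A(I)$. By Haag duality it lies in $\A(I')$, and it acts on the two components of $I'$ by different charges. The goal is to show that such an element cannot exist.

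The cleanest route is probably to cite the corresponding statement in~\cite{BiKa}: the torus map $F/L\to\Out(\A_L(I))$ is injective. Then $\alpha_m$ inner forces $q\in L$ in the lattice setting, and $q=0$ for the pure current net. Combined with additivity, this gives the ``only if'' direction.

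The main obstacle is this last step: showing rigorously that a nonzero charge gives a non-inner automorphism. The natural tool is the two-component argument in $I'$ together with strong additivity. Failing that, I would invoke the faithfulness result of~\cite{BiKa} directly.
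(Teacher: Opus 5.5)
\textbf{Gap in the ``only if'' direction.} Your ``if'' direction matches the paper: take the difference of the shifts and apply Lemma~\ref{lem:ZeroShiftInner}. The ``only if'' direction is not proved, and the pieces you offer do not combine into a proof.

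First, your flank computation is wrong. In \eqref{eq:GaugeAutFormula} the phase pairs $g$ with the current shift, not with the primitive $L$. For $g$ supported off $I$ one has $\int\langle g,m\rangle_F=0$, so $\alpha_m$ is trivial there (Remark~\ref{rmk:CutScope}). Likewise $\Ad W([L])$ acts through $L'$, which vanishes wherever $L$ is constant. So there are no ``different charges'' on the flanks, and $\alpha_m$ looks exactly like $\Ad u$ there.

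Your Haag-duality step only moves the problem. $W([L])u^*\in\A(I')$ implements on $\A(I')$ the shift by $qf'$, of total charge $-q$. Its existence is therefore the same innerness question, posed for $I'$.

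Second, the fallback citation proves a different statement. Injectivity of $F/L\to\Out(\A_L(I))$ concerns the induced lift $\beta_q$ on the larger factor $\A_L(I)$, not $\alpha_m$ on $\A_F(I)$. An implementer $u\in\A_F(I)$ of $\alpha_m$ gives $\Ad\iota(u)(\psi_\nu)=\iota(u\rho_\nu(u^*))\psi_\nu$. Nothing forces $u\rho_\nu(u^*)$ to be the scalar $\e^{-\pi\ima\langle q,\nu\rangle}$. The composite $\Ad\iota(u)^*\circ\beta_q$ fixes $\iota(\A_F(I))$ pointwise but may still rotate the charged fields, as the dual gauge action $\delta$ of Section~\ref{ssec:GaugeVsLocal} does. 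So no contradiction with lattice faithfulness arises, and ``$q=0$ for the pure current net'' is asserted, not derived.

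\textbf{The missing idea.} This is the content of `equivlsamecharge' in~\cite{BiKa}, which the paper cites: push the implementer outward with strong additivity, then compare charges globally. Suppose $\alpha_m=\Ad u$ on $\A(I)$ with $u\in\A(I)$. Take any interval $\tilde I\supset\bar I$, and let $J_1,J_2$ be the components of $\tilde I\setminus\bar I$. Both $\alpha_m$ (since $\supp m\subset I$) and $\Ad u$ (by locality) are trivial on $\A(J_1)$ and $\A(J_2)$. Strong additivity gives $\A(\tilde I)=\A(J_1)\vee\A(I)\vee\A(J_2)$, so $\alpha_m=\Ad u$ on every such $\A(\tilde I)$. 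The charge-$q$ localized automorphism of the current net would then be unitarily equivalent to the vacuum sector. That is impossible for $q\neq0$, since current-net sectors of different charge are inequivalent. This global step, not a comparison of the two sides of $I'$, is what forces the charges to agree.
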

\begin{proof}
  If the charges agree, their difference has zero charge and is inner by
  Lemma~\ref{lem:ZeroShiftInner}. Conversely a common inner conjugate
  preserves the charge; this is `equivlsamecharge' in~\cite{BiKa}. That
  argument uses \emph{strong additivity} of the current net
  $\A(I)=\A(I_1)\vee\A(I_2)$ after deleting an interior point to extend an
  intertwiner from $I$ to a larger interval before comparing charges. Strong
  additivity and its stability under compact-group fixed points are proved
  in~\cite{Xu2005} (see also~\cites{Lo1979, DoLo1984}).
\end{proof}

Thus in the torus setting the whole outer kernel is classified by
$q=\int_I m\in F$, \ie by $g=q\bmod L\in T=F/L$, exactly as in~\cite{BiKa},
and the choice of fundamental domain only moves the representative.

\begin{rmk}
  \label{rmk:CutScope}
  On the full circle a profile with $m\not\equiv0$ has winding and is not a
  single-valued smooth loop; the cut turns it into a function, and
  Lemma~\ref{lem:CutIndependence} shows the resulting outer class does not
  see the cut. Consequently $\alpha_m$ is the identity on $\A(J)$ for every
  interval $J\subset\SOne_t$ disjoint from $I$, but need not be trivial on
  intervals crossing $t$: it is a \emph{cut soliton}, not a DHR morphism of
  the uncut circle net.
\end{rmk}

\subsection{Flanks and the lattice discriminant}
\begin{lem}[Flank triviality]
  \label{lem:FlankTriviality}
  The action of $U_{g,k}$ on the left flank $\A_{G_k}(c,a)$ is trivial.
  Its action on the right flank $\A_{G_k}(b,d)$ is the constant gauge
  action of $g$, and is trivial if and only if $g\in Z(G)$.
\end{lem}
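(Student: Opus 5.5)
The plan is to read off the action of $U_{g,k}=\pi_k(\gamma_g)$ on the two flanks directly from the pointwise values of the jump loop $\gamma_g$ there. Triviality on the left flank and the gauge description on the right flank both follow from locality, the decomposition trick of Proposition~\ref{prop:JumpHomomorphism}, and the fact that $\gamma_g$ is locally constant on each flank. The \emph{only if} direction then uses faithfulness of the global gauge action of $G/Z(G)$ on the vacuum net.

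\textbf{Left flank.} By construction $\gamma_g=e$ on $(c,a)$. Shrink slightly to a closed subinterval $J\Subset(c,a)$, or rather work with the open interval $(c,a)$ and use that the transition regions are compact in $I$ and in $(d,c)$, so $\gamma_g=e$ on a neighbourhood of $\overline{(c,a)}$. Then $\gamma_g\in\LL_{(c,a)'}G$, and locality of the loop group net (Remark~\ref{rmk:LoopGroupNetAxioms}, supplied by Corollary~\ref{cor:LoopGroupNetAnalytic}) gives that $\pi_k(\gamma_g)$ commutes with $\pi_k(\Lpc_{(c,a)}G_k)''=\A_{G_k}(c,a)$. If a neighbourhood issue arises at the endpoints, I would first prove it for every $J\Subset(c,a)$ and conclude by additivity.

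\textbf{Right flank.} Here $\gamma_g=g$ near $\overline{(b,d)}$. Let $\hat g$ be the constant loop at $g$. Constant loops lift canonically to $\Lpc G_k$, since the cocycle \eqref{eq:AffineLevelCocycle} vanishes on constants. The global gauge unitary $V_g=\pi_k(\hat g)$ fixes $\Omega_k$ and implements pointwise conjugation of loops. The loop $\hat g^{-1}\gamma_g$ equals $e$ on a neighbourhood of $\overline{(b,d)}$, so the argument from the left flank shows that it commutes with $\A_{G_k}(b,d)$. Hence $\Ad U_{g,k}=\Ad V_g$ on $\A_{G_k}(b,d)$, and this is the constant gauge action: $\pi_k(\ell)\mapsto\pi_k(g\ell g^{-1})$ for $\ell\in\Lpc_{(b,d)}G_k$. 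Scalars are irrelevant throughout, as in Proposition~\ref{prop:JumpHomomorphism}.

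\textbf{Triviality iff central.} If $g\in Z(G)$, then $g\ell g^{-1}=\ell$ pointwise in $\LL G$. The lift to $\Lpc G_k$ can only change by a character, which is continuous and trivial on the connected group $\Lpc_{(b,d)}G_k$; alternatively one checks it on the Lie algebra, where $\Ad g=\id$. So the action is trivial.

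Conversely, suppose $\Ad V_g$ is trivial on $\A_{G_k}(b,d)$. I would take $\xi\in\LL_{(b,d)}\LieG$ equal to $f\cdot X$ with $X\in\LieG$ and $f$ a bump function. Triviality forces $\pi_k(\exp(s\,f\,\Ad_g X))$ and $\pi_k(\exp(s\,fX))$ to agree up to scalar for all $s$. Differentiating gives $\pi_k(f(\Ad_gX-X))\in\RR\cdot1$ on the smooth vectors. Smeared currents of nonzero Lie algebra elements are unbounded, noncentral operators: for instance, they do not annihilate $\Omega_k$, while a scalar multiple of $1$ coming from a current with vanishing vacuum expectation must be $0$ (Reeh--Schlieder, or the explicit vacuum two-point function $\kappa_k$). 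Hence $\Ad_gX=X$ for all $X$, so $g$ acts trivially on $\LieG$, i.e.\ $g\in Z(G)$.

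This converse step, making the derivative argument rigorous on a common core of smooth vectors (positive-energy/Goodman-type domain), is where I expect the main technical work. A cleaner alternative is to note that $V_g$ would then commute with $\A_{G_k}(b,d)$, hence by Reeh--Schlieder and rotation covariance with the whole net, hence be scalar by irreducibility. Since $V_g$ fixes $\Omega_k$, it would equal $1$, and the gauge action on $\Hil_{G_k}$ restricted to the degree-one currents, which carry the adjoint representation, is faithful modulo $Z(G)$.
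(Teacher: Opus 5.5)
Your proof is correct. On the flanks it is the paper's argument: $\gamma_g$ is locally constant near each closed flank, and locality applied to $\gamma_g$ on the left, or to $\hat g^{-1}\gamma_g$ on the right, reduces $\Ad U_{g,k}$ to the identity, respectively to the constant-loop action $\Ad V_g$.

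The routes differ only in the converse.

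\textbf{The paper's converse.} It stays in the derived representation. Triviality makes $\dd\pi_k(Yf)$ and $\dd\pi_k(Yf')$ scalar for $Y=\Ad_gX-X$, so they commute. The affine bracket instead gives them a central commutator proportional to $(Y,Y)_k\int(f')^2\neq0$. This uses only the level-$k$ cocycle, so it works in any level-$k$ representation, but it needs the derived representation on a common smooth domain.

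\textbf{Your first version.} It feeds the same cocycle through the vacuum. The scalar vanishes because the vacuum one-point function of a current vanishes; this, rather than Reeh--Schlieder, is the reason. By the two-point function, $\|\dd\pi_k(fY)\Omega_k\|^2$ is proportional to $k\basicform{Y}{Y}\sum_{n>0}n|\hat f(n)|^2$, which is nonzero for $Y\neq0$ and nonconstant real $f$. So this is a variant of the paper's argument.

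\textbf{Your alternative.} This one is genuinely different and cleaner. Once $V_g\in\A_{G_k}(b,d)'$ and $V_g\Omega_k=\Omega_k$, cyclicity of $\Omega_k$ for $\A_{G_k}(b,d)$ already gives $V_g=1$; rotations and irreducibility are unnecessary. The degree-one subspace is a copy of the adjoint representation, nonzero because $k\ge1$, and it then forces $\Ad_g=\id$. This avoids unbounded-operator domain questions entirely, at the price of using the net axioms and the vacuum.

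\textbf{A small correction in the ``if'' direction.} A continuous character of a connected group need not be trivial; think of $\RR$. It is trivial here because $C_c^\infty\otimes\LieG$ is perfect. So rely on your Lie-algebra check, or on $V_z=1$ on the vacuum module, rather than on connectedness alone.
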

\begin{proof}
  Let $J\subset\SOne$ be an arc on which $\gamma_g$ is constant $=h$. For
  $x\in\LL_J G$ the product $\gamma_g x\gamma_g^{-1}$ is the pointwise
  conjugate $x^h$, so, for chosen projective representatives,
  \begin{equation}
    \label{eq:ConstantConjugation}
    \Ad U_{g,k}\big(\pi_k(x)\big)\ \sim\ \pi_k(x^h),\qquad x\in\LL_J G .
  \end{equation}
  Here $\sim$ records a possible scalar, not an ambiguity of the induced
  inner automorphism. More precisely, $\gamma_g h^{-1}$ is the identity
  on $J$, so locality identifies its action there with the constant-loop
  action of $h$. On the left flank $h=e$, giving the identity; on the right
  flank $h=g$, giving the constant gauge action.

  For necessity use the derived affine representation $\dd\pi_k$, rather
  than assuming a faithful representation of the unextended loop group.
  Triviality of the constant gauge action implies
  $\dd\pi_k((\Ad_gX-X)f)=0$ for every $X\in\LieG$ and real smooth
  $f$ compactly supported in the right flank. Set $Y=\Ad_gX-X$ and
  choose nonconstant $f$ and $h=f'$. The affine bracket of $Yf$ and
  $Yh$ has no loop-algebra part and has central part proportional to
  \[
    (Y,Y)_k\int f h'
       =-(Y,Y)_k\int (f')^2.
  \]
  The derived commutator on the common smooth-vector domain forces
  $Y=0$ since $k>0$. Thus $\Ad_g$ is the identity on $\LieG$, equivalent
  to $g\in Z(G)$ for connected $G$. Conversely a central constant loop
  acts trivially on the vacuum-generated local current algebra.
\end{proof}

\begin{rmk}
  Flank triviality is not a statement of DHR localization on the full
  circle: the return region also lies in the complement. A boundedly
  localized, transportable morphism requires a separate soliton/path
  construction. Nor does localization imply innerness on $M_k$.
\end{rmk}

\begin{prop}[Scaled discriminant and central subgroup in ADE]
  \label{prop:CentralDiscriminant}
  Suppose here that $G$ is simply laced. Identify $Q=\Qvee$ using the
  basic form and $Z(G)=P/Q$. For $L_k=\sqrt{k}\,Q$, one has
  \[
    L_k^*=\frac1{\sqrt{k}}P,\qquad
    L_k^*/L_k\cong P/kQ,\qquad
    |L_k^*/L_k|=k^{\operatorname{rank}G}|P/Q|.
  \]
  The map $p+Q\mapsto kp+kQ$ embeds $Z(G)$ in this discriminant group.
\end{prop}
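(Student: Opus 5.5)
The plan is to compute everything from the elementary lattice fact that, for a lattice $L$ in the Euclidean space $F$, scaling $L\mapsto\lambda L$ with $\lambda>0$ scales the dual by $\lambda^{-1}$. First I will record the simply laced identification: with the basic form, the long and short roots coincide and have squared length $2$, so the coroot $\alpha^\vee=2\alpha/\basicform{\alpha}{\alpha}$ equals $\alpha$. Hence $Q=\Qvee$ under $\LieT\cong\LieT^*$. The dual of $Q$ with respect to the basic form is the lattice of $x$ with $\basicform{x}{\alpha^\vee}\in\ZZ$ for all coroots, which is the weight lattice $P$. Since $G$ is simply connected, $Z(G)$ is the kernel of $\Ad$ restricted to $T=F/\Qvee$. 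That kernel is $\{x:\alpha(x)\in\ZZ\ \forall\alpha\}/\Qvee=P/Q$, using the $\exp_G(2\pi x)$ normalization fixed in Section~\ref{sec:Notation}.

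Next, for $L_k=\sqrt{k}\,Q$, an element $y\in F$ lies in $L_k^*$ exactly when $\basicform{y}{\sqrt{k}q}\in\ZZ$ for all $q\in Q$. This says $\sqrt{k}\,y\in Q^*=P$, so $L_k^*=\frac{1}{\sqrt{k}}P$. The map $y\mapsto\sqrt{k}\,y$ is a group isomorphism $F\to F$. It carries $L_k^*$ onto $P$ and $L_k$ onto $kQ$, so it induces $L_k^*/L_k\cong P/kQ$. For the order, use the chain $kQ\subset Q\subset P$: we have $|Q/kQ|=k^{\operatorname{rank}G}$ because $Q$ is free of rank equal to $\operatorname{rank}G$, and $|P/kQ|=|P/Q|\,|Q/kQ|$.

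For the embedding, define $\phi\colon P/Q\to P/kQ$ by $p+Q\mapsto kp+kQ$. This is well defined because $p\in Q$ implies $kp\in kQ$, and it is clearly a homomorphism. It is injective because $kp\in kQ$ forces $p\in Q$, since $P$ is torsion free inside the vector space $F$. Equivalently, $\phi$ is induced by multiplication by $k$ on $P$, and that map is injective. Under the isomorphism of the previous paragraph, $\phi$ corresponds in $L_k^*/L_k$ to $p+Q\mapsto\sqrt{k}\,p+L_k$.

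I will add a remark that this subgroup is the image of $\sqrt{k}Q^*\cdot$-type charges, that is, of $L_k\otimes\frac{1}{k}$, the simple currents. I do not expect any step to be a real obstacle. The only point needing care is the convention check that $Z(G)=P/Q$ rather than $P^\vee/Q^\vee$ with a $2\pi$ rescaling. This is settled by the paper's identification $T=F/\Qvee$ through $\exp_G(2\pi x)$ together with $\Qvee=Q$ in the simply laced case.
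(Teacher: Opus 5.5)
Your proof is correct and follows the paper's argument step for step: dualize $\sqrt{k}\,Q$ to get $\frac{1}{\sqrt{k}}P$, rescale by $\sqrt{k}$ to identify $L_k^*/L_k$ with $P/kQ$, count using $[Q:kQ]=k^{\operatorname{rank}G}$, and check that multiplication by $k$ is well defined and injective. Your extra check of $Q=\Qvee$ and $Z(G)=P/Q$ is fine, though the statement takes these as given. Drop the closing ``simple currents'' aside, because it is wrong as written. The image of $Z(G)$ in $L_k^*/L_k$ is $\sqrt{k}P/L_k=kL_k^*/L_k$. By contrast, $\frac1k L_k=\frac{1}{\sqrt{k}}Q$ gives a different subgroup, isomorphic to $Q/kQ$. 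The paper also explicitly declines to identify the discriminant with the sector set of the level-$k$ net.
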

\begin{proof}
  Integrality of $\basicform{x}{\sqrt{k}v}$ for all $v\in Q$ is
  equivalent to $\sqrt{k}x\in Q^*=P$. Multiplication by $\sqrt{k}$ gives
  the quotient identification, and the index formula follows from
  $[Q:kQ]=k^{\operatorname{rank}G}$. The asserted subgroup map is
  well defined because $kQ$ absorbs changes by $kq$, and injective because
  $kp\in kQ$ implies $p\in Q$.
\end{proof}
This is an algebraic subgroup identification; it does not identify the entire
discriminant with the sector set of a higher-level nonabelian net.

\subsection{Local sectors versus global gauge actions}
\label{ssec:GaugeVsLocal}

We record the two distinct notions of ``action'' that occur, since conflating
them obscures what has to be compared.

\smallskip
\noindent\textbf{Definitions.} An \emph{internal action} of a topological
group $H$ is given on the vacuum Hilbert space of $(\A,U,\Omega)$.
Following~\cite{BiDeGi2023}*{Section~2}, $\Aut(\A)$ consists of
$\alpha=\Ad V$, where $V\in\U(\Hil)$ satisfies
\[
 \begin{aligned}
   V\Omega&=\Omega,\qquad V\A(I)V^*=\A(I) &&\text{for every }I,\\
   VU(\gamma)&=U(\gamma)V &&\text{for every }\gamma.
 \end{aligned}
\]
Here $\gamma$ ranges over $\Diff_+(\SOne)$; projective phases of
$U(\gamma)$ do not affect commutation. This is a group of global
vacuum-preserving net symmetries, distinct from
$\Aut(\A(I))$ for a single factor. Its topology is the strong operator
topology on the vacuum-preserving implementing unitaries. The unitary
implementer is unique: it is determined by $Vx\Omega=\alpha_I(x)\Omega$.
Equivalently one can use compatible vacuum-state-preserving local
automorphisms, as in the companion paper's subsection on topology of sectors.

A compact group $H$ \emph{acts properly} if there is a faithful strongly
continuous unitary representation $V\colon H\to\U(\Hil)$ with
$V(h)\Omega=\Omega$ and $V(h)\A(I)V(h)^*=\A(I)$ for every $h,I$
\cite{Xu2005}*{Definition~2.1, Section~2.3}. Commutation with M\"obius
covariance follows from these conditions; the conformal-net formulation
above also records diffeomorphism covariance~\cite{BiDeGi2023}*{Section~2}.
The fixed-point subnet on $\Hil$ is
$\B(I)=\A(I)\cap V(H)'$. Its vacuum realization is obtained by
restricting to $\Hil^H=\overline{(\bigvee_I\B(I))\Omega}$; this is
Xu's orbifold net $\A^H$.

DHR localization instead concerns transportable representations or
endomorphisms, not these global vacuum-preserving symmetries. A DHR
endomorphism localized in $I$ is the identity on $\A(J)$ for $J\subset I'$.
A twisted representation on a cut circle is the identity on the left of
$I$ and a specified global symmetry on the right. It extends to an
untwisted DHR representation only when its boundary twist is trivial,
with transportability also required. The DHR framework originates
in~\cite{DoHaRo1969II}; the chiral statistics are braided
\cite{FrReSc1989}. Doplicher--Roberts reconstruction~\cite{DoRo1989}
applies to the symmetric finite-statistics setting, and is not applied
here to reconstruct a compact gauge group from an arbitrary braided
chiral category. For the infinite-index extensions used here the
appropriate extension framework is~\cite{DeGi2018}; generalized
fixed-point symmetries are treated in~\cite{BiDeGi2023}.

\smallskip
\noindent\textbf{The lattice net has two layers.} Let $L\subset F$ be even,
$\A_F\subset\A_L=\A_F\rtimes_b L$, and let $\psi_\ell$ ($\ell\in L$) be the
charged unitaries. For $q\in F$ let $\beta_q$ be the
$\alpha$-induced lift of the current charge $q$ on $\A_L(I)$.
Then:
\begin{enumerate}\itemsep2pt
\item $\beta_q$ is \emph{inner} on $\A_L(I)$ iff $q\in L$; the lattice
  charge $q\in L$ is implemented by $\psi_q$;
\item the induced representation has left boundary action $\id$ and
  right boundary action $\delta_{q+L^*}$, where $\delta$ is the global
  action below. The twist is trivial precisely when $q\in L^*$;
  these are the lattice DHR sectors $L^*/L$;
\item the extension has an \emph{honest} internal dual gauge action of
  \[
    T_{\mathrm{gauge}}\ =\ F/L^{*},
  \]
  whose fixed-point subnet is $\A_F$, with its vacuum realization as
 described. It is the dual action of the lattice crossed product;
Explicitly, in the companion's sign convention,
\[
 \delta_{f+L^*}(\iota(a)\psi_\ell)
    =e^{-2\pi\ima\langle f,\ell\rangle}\iota(a)\psi_\ell.
\]
This fixes the current subnet pointwise and rotates the charged fields.
The kernel of this action of $F$ is exactly $L^*$, so its faithful
acting group is $F/L^*\cong\widehat L=\Hom(L,\TT)$. Haar averaging
retains exactly the zero-charge Fourier component, giving
$\A_L^{F/L^*}=\A_F$. Vacuum implementation is diagonal by these characters
on $\bigoplus_{\ell\in L}(\Hil_F)_\ell$ and fixes the charge-zero vacuum.
These formulas are clearly described in twisted sectors section \cite{BiKa}.
\end{enumerate}

\smallskip
\begin{rmk} The companion's faithful $F/L$-kernel
is a homomorphism into $\Out(\A_L(I))$, not a vacuum-preserving action
on the net. Its twisted sectors have the boundary-degree sequence
\begin{equation}
  \label{eq:TwoTori}
  1\longrightarrow \underbrace{L^{*}/L}_{\text{local / DHR}}
  \longrightarrow \underbrace{F/L}_{T}
  \longrightarrow \underbrace{F/L^{*}}_{T_{\mathrm{gauge}}}
  \longrightarrow 1 .
\end{equation}
The kernel $L^{*}/L$ consists of untwisted DHR sector classes; projection
records the right boundary global symmetry. This is not a restriction
of an honest $F/L$ net action. In the simply laced case, for $L=\Qvee=Q$
under the basic form, $L^{*}=P$ and
\eqref{eq:TwoTori} is exactly
\[
  1\longrightarrow Z(G)=P/\Qvee\longrightarrow T
  \longrightarrow T/Z(G)=T_{\mathrm{ad}}\longrightarrow 1 ,
\]
the maximal torus of $G$ and of $G_{\mathrm{ad}}$. The Sutherland
obstruction is the scalar associator class of the outer kernel, obstructing
the implementing twisted crossed product. It is \emph{not} merely the
obstruction to splitting the ordinary group extension
\eqref{eq:TwoTori}; those are different extension problems.
\end{rmk}

\begin{rmk}[Status of the identification]
  \label{rmk:TwoToriStatus}
  The companion's induced-kernel calculation and twisted-sector
  construction supply (1)--(2); the explicit dual action supplies (3).
  The continuous gauge group is $\widehat L$, whereas the finite group
  $L^*/L$ labels DHR sectors. Infinite index does not make this explicit
  group action into a hypergroup: more general inclusions can have
  hypergroup symmetries, but this example has a specified compact group
  action. Outside ADE the coroot-lattice discriminant must not be
  identified with the center without checking the weight/coweight data.
\end{rmk}

\subsection{Twisted representations and their boundary degree}
\label{ssec:TwistedRepresentations}
Let $G$ act properly on $\A$ by global symmetries $\delta_g$ as above.
Cut the circle at $p\notin\overline I$ and order its intervals as on
the real line. In M\"uger's endomorphism formulation~\cite{Mug05}*{Section~2.1},
a locally normal endomorphism $\rho$ of the algebra generated by the
bounded interval algebras is \emph{$g$-localized in $I$} if
\[
  \rho|_{\A(J)}=\id\quad(J<I),\qquad
  \rho|_{\A(J)}=\delta_g\quad(J>I).
\]
It is \emph{transportable} if it is unitarily equivalent to a
$g$-localized endomorphism in every other allowed interval. These
transportable morphisms are the $g$-twisted representations; for $g=e$
one recovers the untwisted localized representation category. Neither
the vacuum-preserving property nor invertibility is part of this
definition of a twisted representation.

The boundary degree is $\partial\rho=g$. M\"uger's action hypotheses
include local faithfulness, so the degree is unique. Composition gives
$\partial(\rho\sigma)=(\partial\rho)(\partial\sigma)$; global conjugation
by $\delta_h$ sends degree $g$ to $hgh^{-1}$. The associated category
is braided crossed: for a homogeneous object $\rho$ of degree $g$,
the exchange morphism has the form
$\rho\sigma\to\delta_g\sigma\delta_g^{-1}\rho$, not in general an
ordinary braiding $\rho\sigma\to\sigma\rho$. The untwisted component
has the usual DHR braiding. M\"uger's existence results for all degrees
in the completely rational finite-group setting do not automatically
give existence or invertibility in every degree for a continuous group.

In the companion lattice example the global symmetry group is
$G=F/L^*$. The induced sector represented by $q+L\in F/L$ has degree
$q+L^*$; the degree-zero classes are $L^*/L$. This is the companion's
subsection on $F/L^*$-twisted kernels and its proposition `prop:Mueger'.
Restriction of invertible twisted sectors to a fixed local factor
provides the outer-kernel viewpoint, whereas the boundary degree records
the global symmetry on the right. These two maps are not the same map.

\section{The \texorpdfstring{$\SU(2)$}{SU(2)} reduction and the level-one lattice}
\label{sec:SU2}

We elaborate for $G=\SU(2)$, $k=1$, where
the relevant lattice is $A_1$ and the Cartan is $T_L\cong\U(1)$. This is the
rank-one instance and the abelian seed of~\cite{BiKa}.

\subsection{The \texorpdfstring{$A_1$}{A1} lattice net}
\label{ssec:A1setup}
Let $L=A_1=\sqrt2\,\ZZ\subset\RR$ with $\langle x,x\rangle=x^2$, dual lattice
$L^{*}=\tfrac1{\sqrt2}\ZZ$, and discriminant form $D=L^{*}/L\cong\ZZ/2$. The torus
$T_L=\RR/L\cong\U(1)$ is a maximal torus of $\SU(2)$, with Weyl group
$\Weyl\cong\ZZ/2$ acting by $x\mapsto -x$. The lattice net $\A_L$ of
\cite{DoXu2006} (see also~\cites{Bi2011, BiKa}) is, by the algebraic
Frenkel--Kac--Segal construction~\cite{Bi2011}*{Section~3.5}, the level-$1$ loop group net
$\A_{\SU(2)_1}$:
\[
  \A_L\ \cong\ \A_{\SU(2)_1}.
\]
Its DHR sectors are $\{[\id],[\rho]\}\cong D$, with $[\rho]^2=[\id]$ and
$h_\rho=\tfrac14$~\cite{DoXu2006}. The local lattice algebra is the hyperfinite
type \threeone factor.

\subsection{The companion torus kernel}
\label{ssec:TorusKernel}
The companion paper~\cite{BiKa} constructs a faithful $T_L=F/L$-kernel
using $\alpha$-induction for the current-net extension
$\A_F\subset\A_L$, and computes its scalar associator. Write its
obstruction as $[\omega_L]\in H^3_{\Borel}(T_L,\TT)$, rather than
identifying a two-variable Weyl multiplier with a three-variable cocycle.
Under~\eqref{eq:Transgression}, the class is the integral quadratic form
\begin{equation}
  \label{eq:A1form}
  q_L(x)=\tfrac12\langle x,x\rangle_{A_1},\qquad
  q_L\in H^4(BT_L,\ZZ)=\Sym^2(L^*).
\end{equation}

\subsection{The comparison target on generators}
\label{ssec:IndUpCartan}

Let $F=L\otimes_\ZZ\RR$, let $\iota\colon\A_F(I)\to\A_L(I)$ be the
current-subnet inclusion, and write $\psi_\nu$ ($\nu\in L$) for charged
unitaries satisfying $\psi_\nu\iota(a)\psi_\nu^*=\iota(\rho_\nu(a))$.
In the convention of the companion paper, the induced lift $\beta_q$
of a current charge $q\in F$ has the generator formula
\[
 \beta_q(\iota(a)\psi_\nu)
 =e^{-\pi\ima\langle q,\nu\rangle_F}\iota(\rho_q(a))\psi_\nu.
\]
This is the braiding-based extension formula of~\cite{BiKa}; its outer
class depends on $q$ modulo $L$. 

\begin{rmk}[The half-factor convention]
  \label{rmk:HalfFactor}
  The $\tfrac12$ in the phase is a variable-limit integral, not a product of
  two total integrals. Put $\dd\mu=\dd\theta/(2\pi)$, choose $\ell_0$
  supported in $I$ with $\int_I\ell_0\dd\mu=1$, and set
  $F_0(\theta)=\int_a^\theta\ell_0(t)\dd\mu(t)$. Then
  \[
    \int_I\ell_0F_0\dd\mu=\int_I F_0\dd F_0=\tfrac12 .
  \]
  If the primitive is taken with respect to $\dd\theta$ instead of $\dd\mu$,
  the corresponding $2\pi$ must be kept; the two conventions must not be
  mixed within one computation.
\end{rmk}

\subsection{Comparison with the torus kernel}
\label{ssec:TorusComparison}

The braiding $\varepsilon$ entering the generator formula is the
Fredenhagen--Rehren--Schroer braiding of the localized sectors of the
current net~\cite{FrReSc1989}; it is exactly this braiding that the
companion paper uses to extend automorphisms through the crossed product,
and the extension formula is legitimate because the sectors carry braid
group statistics~\cite{FrReSc1989}. The correct notion of \emph{the same}
kernel on \emph{the same} local algebra is equality of the braided action of
the DHR category on that algebra, not merely isomorphism of abstract sector
groups~\cite{GiRe2018}.

The torus representation has an explicit decomposition. In the normalized
circle coordinate $u\in[0,1]$, an honest torus loop has a lift
$f(u)=\Delta_f u+f_0+f_1(u)$, where $\Delta_f\in L$ is its winding,
$f_0$ is the constant mode, and $f_1$ is periodic with zero mean.
Up to scalar section choices, its implementer has the form
\begin{equation}
  \label{eq:LoopUnitary}
  V(f)\ \sim\ Z(f_0)\,W([f_1])\,\widetilde\Gamma_{\Delta_f}.
\end{equation}
Here $Z(f_0)$ is the constant-loop implementer, diagonal on charge
spaces, and $\widetilde\Gamma_{\Delta_f}$ shifts charge by the winding,
with the lattice cocycle included. This is the construction in
\cite{Bi2011}*{Section~3.4}, equivalently the torus representation
description in~\cite{DoXu2006}*{Section~3}. The winding operator cannot
be discarded for a nonzero-winding loop. Moreover a jump parameter
$q\in F$ need not lie in $L$; a cut profile of arbitrary charge is not
therefore automatically an honest circle loop of winding $q$.

The comparison must recover the induced generator formula above on both
the current algebra and the charged fields, retaining the constant and
winding operators. Agreement on the current algebra by itself does not
determine the action on $\psi_\nu$. The decomposition makes the
representation-level comparison concrete; it is not a proof that the
arbitrary-charge cut profile already has the desired induced action.

\begin{prop}[Torus jumps and lattice $\alpha$-induction]
  \label{prop:TorusLoopInduction}
  Let $L\subset F$ be a positive even lattice, and realize $\A_L$ by
  the vacuum representation $V$ of the central extension of
  $L(F/L)$~\cites{DoXu2006, Bi2011}. Fix $I$ and choose a Borel section
  $s\colon F/L\to F$. For $q=s(t)$ choose the torus jump profile below.
  On $\A_L(I)$ its conjugation agrees with the companion's
  $\alpha$-induced lift $\beta_q$, with the same braiding orientation.
  Hence the torus jump construction and the companion construction give
  the same outer kernel and the same Sutherland obstruction on this factor.
\end{prop}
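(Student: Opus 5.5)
The plan is to choose the jump profile so that it splits, via \eqref{eq:LoopUnitary}, into a current part and a winding part, and then to check the generator formula for $\beta_q$ on the two kinds of generators of $\A_L(I)$ separately. Concretely, for $q=s(t)$ I take $m=q\,\ell_0$ with $\ell_0$ as in Remark~\ref{rmk:HalfFactor}, supported in $I$ with total $\dd\mu$-mass one. I then form the cut primitive $L_q=fF_0\,q$ from the cut point, as in Subsection~\ref{ssec:FundamentalDomain}. Since $f$ vanishes near the cut, $L_q$ is a single-valued smooth $F$-valued function. Its Weyl unitary $W([L_q])$, which in \eqref{eq:LoopUnitary} is the zero-winding piece $W([f_1])$ together with the constant mode, is the candidate implementer. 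Define $\beta'_q:=\Ad W([L_q])|_{\A_L(I)}$. It preserves $\A_L(I)$: on the current subnet this follows from \eqref{eq:GaugeAutFormula}, and on charged fields it follows from the Weyl commutation relations below.

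The current generators are handled first. Formula~\eqref{eq:GaugeAutFormula} gives $\beta'_q(\iota(a))=\iota(\alpha_m(a))$. The shift $\alpha_m$ represents the current charge $q$, so by Corollary~\ref{cor:ClassIsCharge} it agrees with $\rho_q$ up to an inner automorphism of $\A_F(I)$. I would fix the representative of $\rho_q$ used in the companion formula to be exactly $\alpha_m$ for this $m$; that choice is permitted by Remark~\ref{lem:CutIndependence}. With it, $\beta'_q\circ\iota=\iota\circ\rho_q$ holds on the nose.

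The charged generators are the second check, and I expect this to be the main obstacle. Take $\psi_\nu$ to be the charged unitary $\widetilde\Gamma_\nu W([F_\nu])$, where $F_\nu$ is a charge-$\nu$ profile localized in $I$. The Weyl relations with the lattice cocycle give
\[
W([L_q])\,\psi_\nu\,W([L_q])^*
 =e^{\ima\sigma(L_q,F_\nu)}\,e^{\ima c(q,\nu)}\,
   W([L_q])\,W([F_\nu])\,W([L_q])^*\,\widetilde\Gamma_\nu .
\]
Here $\sigma$ is the symplectic form $\int\langle L_q,F_\nu'\rangle\,\dd\mu$, and $c(q,\nu)$ collects the commutator of the constant mode with the winding shift. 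I would compute $\sigma$ by taking both profiles proportional to the same $F_0$; the variable-limit integral of Remark~\ref{rmk:HalfFactor} then gives $\int F_0\,\dd F_0=\tfrac12$. The total phase is therefore $e^{-\pi\ima\langle q,\nu\rangle_F}$, once the winding contribution is cancelled against the constant-mode contribution, with sign fixed by the companion's braiding orientation. The remaining factor is $\iota(\rho_q(\cdot))\psi_\nu$, because $\rho_q$ acts on currents by the same shift. This reproduces the companion formula for $\beta_q$ on the generator $\iota(a)\psi_\nu$. The delicate points are exactly that the winding operator $\widetilde\Gamma_\nu$ cannot be dropped and that the $2\pi$ normalization must be used consistently throughout. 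I would verify the sign by comparing with the FRS braiding $\varepsilon(\rho_q,\rho_\nu)$ in the case $I_\nu<I_q$.

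Since the generators $\iota(a)\psi_\nu$ generate $\A_L(I)$, we get $\beta'_q=\beta_q$ on the whole algebra. Borel measurability in $t$ follows from measurability of $s$ and from the strong continuity of $q\mapsto W([L_q])$. Because the automorphisms agree exactly, for every $t$, the composition defects can be chosen to be the same unitaries. The scalar associators then coincide, so the two constructions give the same class in $H^3_{\Borel}(T_L,\TT)$, and in particular the same outer kernel on $\A_L(I)$.
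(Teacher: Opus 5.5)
Your route is the paper's: a winding-zero torus loop equal to $qF_0$ on $I$, $0$ on the left flank and $q$ on the right flank, checked on current generators and, via $\int F_0\,\dd F_0=\tfrac12$, on charged generators, followed by generation, normality and equal defects. The charged-field step, however, is the whole content of the proposition, and it does not work as written.

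Your $\psi_\nu=\widetilde\Gamma_\nu W([F_\nu])$ with $F_\nu$ localized in $I$ is not a charged field of $\A_L(I)$: a periodic function supported in $I$ has total charge zero, and $\widetilde\Gamma_\nu$ is global. By \eqref{eq:LoopUnitary}, the local loop with lift $h_\nu$, where $h_\nu'=\nu\ell_0$ on $I$, is $Z(h_0)W([h_1])\widetilde\Gamma_\nu$ with $h_1=h_\nu-\nu u-h_0$. This $h_1$ is spread over the whole circle. Let $\bar L_q$ be the mean of $L_q$. The oscillator pairing of $L_q-\bar L_q$ with $h_1'$ is $\int\langle L_q,h_\nu'\rangle\,\dd\mu-\langle\bar L_q,\nu\rangle$. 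Only this profile-dependent term $\langle\bar L_q,\nu\rangle$ cancels the zero-mode/winding commutator. In your bookkeeping $\sigma$ is already $\tfrac12\langle q,\nu\rangle$, so $c(q,\nu)$ has nothing to cancel against. Your display also still conjugates $W([F_\nu])$ after extracting $\e^{\ima\sigma}$. Moreover, $L_q=fF_0q$ is the $\dd\mu$-primitive, while \eqref{eq:GaugeAutFormula} uses the primitive in $\theta$. As written, $W([L_q])$ carries current charge $q/2\pi$, and $\e^{\ima\sigma}=\e^{\ima\langle q,\nu\rangle/2}$ is not $\e^{-\pi\ima\langle q,\nu\rangle}$.

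The decisive gap is the sign, which you fix ``by the companion's braiding orientation''. That is circular, because agreement of orientation is part of what is to be proved, and the sign is not cosmetic. A lift with the conjugate phase $\e^{+\pi\ima\langle q,\nu\rangle}$ on $\psi_\nu$ acts identically on currents, yet differs from $\beta_q$ by the dual gauge automorphism $\delta_{q+L^*}$. That automorphism is outer on $\A_L(I)$ for $q\notin L^*$, since $\iota(\A_F(I))'\cap\A_L(I)=\mathbb{C}$ (the $\rho_\nu$ with $\nu\neq0$ are outer by Corollary~\ref{cor:ClassIsCharge}). So the relative sign between the current shift and the charged phase must come out of a single set of commutation relations. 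The paper obtains it from the lifted torus-loop commutator of \cite{DoXu2006}, Section~3, for a winding-zero lift $\phi$ and a winding-$\nu$ lift $h$:
\[
  V(\e^{2\pi\ima\phi})\,V(\e^{2\pi\ima h})\,V(\e^{2\pi\ima\phi})^*
  =\e^{-2\pi\ima\int\langle h',\phi\rangle\,\dd u}\,V(\e^{2\pi\ima h}).
\]
This formula already contains the zero-mode/winding pairing. Applied to $h=g/2\pi$, it gives $\iota\circ\rho_q$ on currents. Applied to $h_\nu$ with $h_\nu'=\nu\ell$, against the jump lift $f_q$ equal to $qH$ on $I$, it gives $\e^{-\pi\ima\langle q,\nu\rangle}$ with no free sign. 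To close the gap, invoke that formula, or redo your decomposition with the correct nonlocal $h_1$ and one fixed convention for $Z$, $W$ and $\widetilde\Gamma$. Your current-generator step, after the $2\pi$ correction, and your final associator argument are fine.
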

\begin{proof}
  Use the normalized circle coordinate $u=\theta/(2\pi)$ and measure
  $\dd u$. Choose a real smooth function $\ell$ compactly supported in
  $I=(a,b)$ with $\int_I\ell\dd u=1$, and put
  $H(u)=\int_a^u\ell(v)\dd v$ on $I$. Choose a smooth periodic
  $F$-valued function $f_q$ equal to $qH$ on $I$, zero on the left
  flank, and $q$ on the right flank, with its return to zero outside
  $I$. Then $\gamma_t=e^{2\pi\ima f_q}$ is an honest torus loop of
  winding zero even when $q\notin L$. The Borel section chooses its
  endpoint representative, not a nonintegral winding.

  We use the lifted torus-loop commutator from
  \cite{DoXu2006}*{Section~3}. For a winding-zero real lift $f$ and
  a local torus loop with lift $h$ of winding $\nu\in L$, it reads
  \[
    V(e^{2\pi\ima f})V(e^{2\pi\ima h})V(e^{2\pi\ima f})^*
    =e^{-2\pi\ima\int\langle h',f\rangle\dd u}
      V(e^{2\pi\ima h}).
  \]
  This includes the constant-mode/winding pairing: it is not just the
  oscillator commutator. For a current Weyl generator $W([g])$,
  take $h=g/(2\pi)$, so integration by parts gives
  \[
    \Ad V(\gamma_t)(\iota(W([g])))
      =e^{\ima\int_I\langle g,q\ell\rangle\dd u}
         \iota(W([g]))
      =\iota(\rho_q(W([g]))).
  \]
  Thus the current charge is $q$: the unnormalized logarithm is
  $2\pi f_q$, whose derivative with respect to $\theta$ is
  $q\ell$ on $I$, consistently with the companion's charge convention.

  Choose the charged generators $\psi_\nu$ as local loop implementers
  with winding $\nu$ and derivative $h_\nu'=\nu\ell$ on $I$, including
  their lattice cocycle factors. Such generators give the crossed-product
  realization~\cite{Bi2011}*{Section~3.4}. Since $f_q=qH$ on $I$,
  \[
    \int_I\langle h_\nu',f_q\rangle\dd u
       =\langle\nu,q\rangle\int_I\ell H\dd u
       =\tfrac12\langle\nu,q\rangle.
  \]
  The same commutator therefore gives
  $\Ad V(\gamma_t)(\psi_\nu)
       =e^{-\pi\ima\langle q,\nu\rangle}\psi_\nu$.
  Together with the current formula, this is exactly the companion's
  induced formula on $\iota(a)\psi_\nu$. These elements generate
  $\A_L(I)$, so normality gives equality of the automorphisms there.
\end{proof}

\begin{prop}[Reduction to the torus kernel]
  \label{prop:ReduceCartan}
  Let $T\subset G$ be a maximal torus and put $L=\Qvee$ with the basic
  form. Suppose that there is a unital normal embedding
  $\jmath\colon N=\A_L(I)\to M_1=\A_{G_1}(I)$ and Borel automorphism
  lifts $\beta_t$ of the companion torus kernel and $\alpha_{t,1}$ of
  $\theta_1|_T$, with Borel unitaries $v(t,s)\in N$, such that
  \[
    \alpha_{t,1}\jmath=\jmath\beta_t,\qquad
    \beta_t\beta_s=\Ad v(t,s)\,\beta_{ts},\qquad
    \alpha_{t,1}\alpha_{s,1}=\Ad\jmath(v(t,s))\,\alpha_{ts,1}.
  \]
  Then the restriction of the ambient obstruction to $T$ equals the
  companion obstruction $[\omega_L]$. In particular, under
  \eqref{eq:Transgression}, it restricts to
  $q_L(x)=\basicform{x}{x}/2$ on $\Qvee$.
\end{prop}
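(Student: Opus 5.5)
The plan is to compute the scalar associator of the restricted kernel $\theta_1|_T$ directly from the hypotheses, using the lift $\alpha_{t,1}$ and the defect unitaries $\jmath(v(t,s))$. We then show that it coincides pointwise with the associator of $\beta$ computed from $v(t,s)$ in $N$. Sutherland's class is independent of the chosen Borel lift and of the Borel defect unitaries (Section~\ref{ssec:Gkernels}). We may therefore use the lift $\alpha_{t,1}$ together with the defects $u(t,s):=\jmath(v(t,s))\in\U(M_1)$, which are Borel because $\jmath$ is normal and $v$ is Borel. The third hypothesis says exactly that these are admissible defect unitaries for $\alpha_{\cdot,1}$ on $T$.

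\textbf{Step 1: the associator of $\beta$.} Since $N\cong R$ is a factor, the companion kernel has a scalar associator $\omega_L(t,s,r)\in\TT$ with
\[
 \beta_t(v(s,r))\,v(t,sr)=\omega_L(t,s,r)\,v(t,s)\,v(ts,r).
\]
By the companion computation recalled in Section~\ref{ssec:TorusKernel}, its class is $[\omega_L]$.

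\textbf{Step 2: transport through $\jmath$.} Apply the unital homomorphism $\jmath$ to this identity. The intertwining relation $\alpha_{t,1}\jmath=\jmath\beta_t$ gives
\[
 \alpha_{t,1}(u(s,r))\,u(t,sr)=\omega_L(t,s,r)\,u(t,s)\,u(ts,r).
\]
Scalars are preserved because $\jmath$ is unital. The ambient associator is defined by this same equation, and it is uniquely determined by the unitaries, so it equals $\omega_L$ pointwise. Hence the restriction to $T$ of the class of $\theta_1$ is $[\omega_L]$.

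Two naturality points need checking. First, the restriction map $H^3_{\Borel}(G,\TT)\to H^3_{\Borel}(T,\TT)$ sends the class of $\omega$ to the class of $\omega|_{T^3}$. Second, the restricted kernel may be computed with any Borel lift on $T$, not necessarily the restriction of the chosen $G$-lift. This also follows from lift independence, since two Borel lifts of the same outer homomorphism on $T$ differ by Borel inner automorphisms.

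\textbf{Step 3: translate to the quadratic form.} The companion paper identifies $[\omega_L]$, under \eqref{eq:Transgression} for the torus $T=F/\Qvee$, with $q_L(x)=\basicform{x}{x}/2$ on $L=\Qvee$. For simply laced $G$ this is \eqref{eq:A1form} in rank one and its lattice generalization. We must also check that $\tau$ is natural for the inclusion $T\subset G$, so that restricting Borel classes corresponds to restricting $H^4(B\cdot,\ZZ)$ classes as in Theorem~\ref{thm:CartanRestriction}; this is part of the cited characteristic-class identification.

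\textbf{Main obstacle.} The delicate point is not the algebra in Step 2, which is formal. It is making sure the defect unitaries of $\beta$ are exactly those used in the companion associator computation, so that the pointwise equality is an equality with the companion representative of $[\omega_L]$. If they differ, we will argue that any other Borel choice $v'(t,s)=\lambda(t,s)v(t,s)$ changes $\omega_L$ by the coboundary of $\lambda$. This is again covered by lift and defect independence.
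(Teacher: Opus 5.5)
Your proposal is correct and follows the paper's proof essentially verbatim. Like the paper, you apply the unital map $\jmath$ to the companion associator identity, use $\alpha_{t,1}\jmath=\jmath\beta_t$ to obtain the ambient associator equation with defects $\jmath(v(t,s))$ and the same scalar $\omega_L$, and then invoke lift/defect independence together with the companion anomaly computation for the quadratic form. Your additional checks (Borel measurability of $\jmath\circ v$, restriction of cocycles to $T^3$, naturality of $\tau$) are points the paper leaves implicit, and you handle them correctly.
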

\begin{proof}
  The companion lifts have scalar associator
  \[
    \beta_t(v(s,r))v(t,sr)
      =\omega_L(t,s,r)v(t,s)v(ts,r).
  \]
  Applying $\jmath$ and using the intertwining identity gives exactly
  the associator equation for the ambient lifts with defects
  $\jmath(v(t,s))$. The embedding is unital, so the scalar remains
  $\omega_L(t,s,r)$. Hence the scalar cocycles agree for these choices,
  and arbitrary Borel choices give the same cohomology class. The
  quadratic-form identification is the companion anomaly computation.
\end{proof}

\begin{rmk}[The outstanding comparison]
  \label{rmk:TorusComparison}
  The hypothesis of Proposition~\ref{prop:ReduceCartan} is a level-one
  ambient-factor comparison. Proposition~\ref{prop:TorusLoopInduction}
  supplies the torus-loop/$\alpha$-induction comparison on the lattice
  factor. Agreement on the current subnet alone does not prove transfer:
  the \emph{same local defect unitaries} must implement composition on
  the ambient factor. The cut profiles, charged fields, zero modes, and
  braiding choices specify the comparison. An equivalent
  comparison on the whole factor also suffices, taking $N=M_1$.

  No ADE hypothesis is needed to define the candidate lattice:
  $L=\Qvee$ is even integral for the basic form in every simple type.
  What is special to ADE at level one is equality of the whole nonabelian
  net with the lattice net~\cite{Bi2011}*{Section~3.5}. Outside ADE that
  equality is not needed for the scalar calculation above; a compatible
  embedding and common defects suffice. Establishing those data, or an
  equivalent direct associator comparison, is the remaining analytic
  step. Higher levels will then follow from
  Proposition~\ref{prop:DiagonalAdditivity}.

  The statement about representations needed here is compatibility with
  the torus loop-group realization, not equality of the full restricted
  $LG$ vacuum with one irreducible $LT$ vacuum. Torus-loop representations
  have sectors indexed by $L^*/L$, with
  $\Hil_{L,\lambda}=\bigoplus_{\mu\in\lambda+L}(\Hil_F)_\mu$
  \cite{DoXu2006}*{Section~3}. A restriction may contain multiple such
  sectors and multiplicities. Compatible lifts and defects, as in
  Proposition~\ref{prop:ReduceCartan}, are sufficient for transferring
  the scalar associator even when the Hilbert-space realizations differ.
\end{rmk}

\begin{lem}[Faithfulness detected by a maximal torus]
  \label{lem:TorusFaithfulness}
  If $G$ is compact connected and $\theta\colon G\to H$ is a group
  homomorphism whose restriction to a maximal torus is injective, then
  $\theta$ is injective.
\end{lem}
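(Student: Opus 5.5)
The plan is to reduce to a statement about elements and conjugacy classes. Let $K=\ker\theta$, a normal subgroup of $G$. Suppose $g\in K$. Since $G$ is compact and connected, every element lies in some maximal torus, and any two maximal tori are conjugate. So there is $x\in G$ with $xgx^{-1}\in T$, where $T$ is the fixed maximal torus on which $\theta$ is assumed injective.

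The key steps are as follows.
\begin{enumerate}
\item Normality of $K$: because $\theta$ is a homomorphism,
\[
\theta(xgx^{-1})=\theta(x)\theta(g)\theta(x)^{-1}=\theta(x)\theta(x)^{-1}=e_H .
\]
Hence $xgx^{-1}\in K\cap T$.
\item By injectivity of $\theta|_T$, we have $K\cap T=\{e\}$. Therefore $xgx^{-1}=e$, and so $g=e$. This proves $K=\{e\}$.
\end{enumerate}

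The only nontrivial input is the maximal torus theorem: every element of a compact connected Lie group lies in a conjugate of $T$. This is classical (Cartan--Weyl), and I would cite it rather than reprove it. No topology on $H$ and no continuity of $\theta$ is needed, since the argument is purely group-theoretic. There is therefore no real obstacle; the one point to check is that we use conjugates of the fixed torus $T$ rather than an arbitrary torus, and the conjugacy theorem provides exactly that.
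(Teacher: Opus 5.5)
Your proof is correct and is essentially the paper's argument: conjugate $g\in\ker\theta$ into the fixed maximal torus $T$ using the maximal torus theorem, observe that the conjugate is still in the kernel, and conclude from injectivity on $T$. The paper's proof is the same three lines without naming $\ker\theta$.
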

\begin{proof}
  Every $g\in G$ is conjugate to an element $t\in T$. If $\theta(g)=e$,
  conjugating gives $\theta(t)=e$, whence $t=e$ and $g=e$.
\end{proof}

\subsection{The level-one anomaly}
\begin{proof}[Proof of Theorem~\textup{\ref{thm:SU2main}}]
  By the compatible lattice/loop-group realization
  $\A_L\cong\A_{\SU(2)_1}$~\cite{Bi2011}*{Section~3.5}, the restriction
  of $\pi_1$ to the maximal-torus loop group is the lattice vacuum
  representation $V$ on the same Hilbert space. For $t\in T_L$ we may
  choose the torus-valued profiles of
  Proposition~\ref{prop:TorusLoopInduction}: profile independence in
  Proposition~\ref{prop:JumpHomomorphism} identifies their outer classes
  with those of the original selector. Proposition~\ref{prop:TorusLoopInduction}
  then identifies $\theta|_{T_L}$ with the companion kernel on
  $\A_L(I)$, including its action on the charged generators.
  The torus kernel of~\cite{BiKa} is faithful, so
  Lemma~\ref{lem:TorusFaithfulness} makes $\theta$ faithful. Naturality of
  the obstruction and the torus comparison just proved give
  \[
    \tau^{-1}([\omega_\theta])|_{BT_L}=q_L.
  \]
  Theorem~\ref{thm:CartanRestriction} determines the full class uniquely.
  The A1 form $q_L(\sqrt2 n)=n^2$ is the restriction of the positive
  basic generator. With the standard torus character class $t$, the
  defining SU(2) bundle has $c_2|_{BT}=-t^2$, so this convention calls
  $-c_2$ the positive generator. Thus the obstruction is the level-one
  class. This argument computes the obstruction from restriction; it
  does not rederive the already specified representation level.
\end{proof}
The same argument applies to every simply connected simply laced simple
group at level one using its root/coroot lattice and the compatible
Frenkel--Kac--Segal realization. Outside ADE,
Proposition~\ref{prop:TorusLoopInduction} remains a lattice-net statement;
the ambient-factor transfer in Proposition~\ref{prop:ReduceCartan} is
the extra input this route would need. Theorem~\ref{thm:AnomalyAllTypes}
obtains the class without it.

\begin{rmk}[$Q$-system and induction]
  \label{rmk:Qsystem}
  The lattice extension constructs the net; $\alpha$-induction constructs
  morphisms or sectors on that extension. The nontrivial A1 discriminant
  label has $h_\rho=1/4$, hence twist $e^{2\pi\ima h_\rho}=\ima$ in this
  orientation. A charged commutation sign $-1$ is not the same quantity
  as this twist. These sector calculations inform the comparison in
  Proposition~\ref{prop:ReduceCartan}; they do not replace it.
\end{rmk}

\section{General simply connected, simple or semisimple cases}
\label{sec:General}

We now isolate the level-one input for all simple types. Let
$c_{\mathrm{basic}}\in H^4(BG,\ZZ)$ be the positive basic generator. Its
restriction is the quadratic form on $L=\Qvee$ by
Theorem~\ref{thm:CartanRestriction}. Thus the level-one comparison in
Proposition~\ref{prop:ReduceCartan} determines the whole class, including
outside ADE. Once the candidate $\theta_1$ is faithful, the level-$k$
vacuum construction is handled by the tensor argument below; no new torus
calculation at each level is required.

We state the resulting general conditional identification, which specializes to
Theorem~\ref{thm:SU2main} when $G=\SU(2)$, $k=1$.

\begin{thm}[Anomaly identification from the torus restriction]
  \label{thm:main}
  Let $G$ be a compact, simply connected, connected, simple Lie group, and let
  $k\ge1$. Let $\theta_k\colon G\to\Out(M_k)$ be the homomorphism of
  Proposition~\ref{prop:JumpHomomorphism}, where $M_k=\A_{G_k}(I)$ is the
  hyperfinite type \threeone factor (Corollary~\ref{cor:LoopGroupNetAnalytic}).
  Suppose that $\theta_k|_T$ is injective and that its obstruction restricts,
  under~\eqref{eq:Transgression}, to the quadratic form
  $q_k(x)=\frac{k}{2}\basicform{x}{x}$ on $\Qvee$.
  Then $\theta_k$ is a $G$-kernel and its Sutherland obstruction
  \[
    [\omega_{\theta_k}]\in H^3_{\Borel}(G,\TT)\ \cong\ H^{4}(BG,\ZZ)\ \cong\ \ZZ
  \]
  is $k$ times the positive basic generator, realized on the hyperfinite
  type \threeone factor.
  More generally, for $G=\prod_j G_j$, write $k=(k_j)_j$, choose the
  component kernels satisfying these hypotheses, and use the tensor-product
  factor. Its obstruction is the tuple $(k_j)_j$.
\end{thm}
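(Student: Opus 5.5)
The plan is to argue in three steps: faithfulness, identification of the class, and the product case.

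\textbf{Faithfulness.} Proposition~\ref{prop:JumpHomomorphism} already gives $\theta_k$ as a homomorphism $G\to\Out(M_k)$ with a Borel automorphism lift $\alpha_{g,k}$ and Borel defects $u_k(g,h)\in\U(M_k)$. The hypothesis says $\theta_k|_T$ is injective. Since $G$ is compact and connected, Lemma~\ref{lem:TorusFaithfulness} then shows $\theta_k$ is injective. So $\theta_k$ is a monomorphism admitting a Borel lift, hence a $G$-kernel in the sense of Section~\ref{ssec:Gkernels}. By Corollary~\ref{cor:LoopGroupNetAnalytic}, $M_k$ is the hyperfinite type \threeone factor $R$. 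Because $M_k$ is a factor, the associator $\omega_{\theta_k}$ is scalar, and its class in $H^3_{\Borel}(G,\TT)$ is well defined.

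\textbf{Identification of the class.} I will use naturality of the Sutherland obstruction under restriction to the closed subgroup $T$. Restricting the Borel lift and defects of $\theta_k$ to $T$ gives Borel data for $\theta_k|_T$, with associator $\omega_{\theta_k}|_{T^3}$. Hence
\[
[\omega_{\theta_k|_T}]=\operatorname{res}^G_T[\omega_{\theta_k}].
\]
The map $\tau$ of \eqref{eq:Transgression} commutes with restriction: $\tau$ is natural in the group, and the identification is compatible with homomorphisms as in \cite{WaWo2015} and the companion paper. So $x:=\tau^{-1}[\omega_{\theta_k}]\in H^4(BG,\ZZ)$ restricts to $q_k$ in $H^4(BT,\ZZ)=\Sym^2((\Qvee)^*)$. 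By definition $c_G$ restricts to $\basicform{x}{x}/2$, so $kc_G$ also restricts to $q_k$. Theorem~\ref{thm:CartanRestriction} says restriction is injective, indeed an isomorphism onto the Weyl invariants. Therefore $x=kc_G$, and under \eqref{eq:H4simple} this is $k$ times the positive basic generator.

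\textbf{Product case.} Let $G=\prod_jG_j$ with component kernels $\theta_{k_j}$ on $M_{k_j}$. Take the tensor-product lift $\bigotimes_j\alpha^{(j)}_{g_j}$ on $\bar\bigotimes_jM_{k_j}$, which is again $R$ by Corollary~\ref{cor:LoopGroupNetAnalytic}, with defects $\bigotimes_ju^{(j)}$. The associator is then $\prod_j\omega_j\circ\mathrm{pr}_j$, whose class is $\sum_j\mathrm{pr}_j^*[\omega_j]$.

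For faithfulness, suppose $\bigotimes_j\alpha^{(j)}_{g_j}$ is inner on the tensor product. Then each $\alpha^{(j)}_{g_j}$ is inner: an automorphism of a tensor product of factors that is a product automorphism and is inner has inner components, as one sees by a relative-commutant/slice argument. Hence each $g_j=e$.

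Finally, $H^4(BG,\ZZ)=\bigoplus_jH^4(BG_j,\ZZ)$ via the projections, so the total class is the tuple $(k_j)_j$.

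\textbf{Main obstacle.} I expect the main obstacle to be justifying that $\tau$ and the Sutherland class commute with restriction to $T$ at the level of Borel cocycles, rather than only for continuous ones. The key point is that Sutherland's class is independent of the Borel lift, so restricted Borel choices are admissible choices for $\theta_k|_T$. The second delicate point is the tensor-product innerness claim in the product case. If a cleaner route is needed there, I would fall back on the fact that Sutherland's class is nonzero on each factor's restriction, though this alone does not give injectivity.
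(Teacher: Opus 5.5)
Your proposal is correct and follows the paper's proof essentially step for step. It uses injectivity from Lemma~\ref{lem:TorusFaithfulness} and identifies $\tau^{-1}[\omega_{\theta_k}]$ with $kc_G$ by naturality plus injectivity of Cartan restriction (Theorem~\ref{thm:CartanRestriction}). For products it uses tensor lifts and defects with the product associator, the slice-plus-polar-decomposition argument for faithfulness, and the splitting $H^4(BG,\ZZ)=\bigoplus_j H^4(BG_j,\ZZ)$.
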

By Theorem~\ref{thm:AnomalyAllTypes} and naturality of the obstruction,
both hypotheses hold a posteriori in every type. Theorem~\ref{thm:main}
records the torus route, which is independent of the restriction argument
only in types $A$, $D$, $E$.
\begin{proof}[Proof of Theorem~\textup{\ref{thm:main}}]
  Lemma~\ref{lem:TorusFaithfulness} proves injectivity. The assumed torus
  obstruction and naturality give
  $\tau^{-1}([\omega_{\theta_k}])|_{BT}=q_k$.
  By Theorem~\ref{thm:CartanRestriction}, restriction is injective, and
  $q_k$ is the restriction of $k$ times the basic class in our convention.
  This proves the simple case.

  For the product case, on $M=\bar\otimes_j M_{k_j}$ choose
  $\alpha_g=\otimes_j\alpha^{(j)}_{g_j}$ and
  $u(g,h)=\otimes_j u_j(g_j,h_j)$. Their associator is
  \[
    \omega(g,h,l)=\prod_j\omega_j(g_j,h_j,l_j).
  \]
  Restriction to each component gives its stated class, yielding the tuple
  under $H^4(BG,\ZZ)=\bigoplus_jH^4(BG_j,\ZZ)$.
  The product outer homomorphism is faithful: if the tensor automorphism is
  inner, a nonzero normal slice of its implementing unitary over all but one
  factor intertwines that component automorphism with the identity. In a
  factor the polar decomposition of such a nonzero intertwiner gives a
  unitary implementer. Thus every component is inner, and component
  faithfulness forces every $g_j=e$. The tensor-product factor is
  $\A_{G_k}(I)$ for the product group, hence hyperfinite of type
  \threeone by Corollary~\ref{cor:LoopGroupNetAnalytic}.
\end{proof}

\begin{prop}[Tensor associators and diagonal additivity]
  \label{prop:DiagonalAdditivity}
  For automorphism lifts $\alpha_g$ on a factor with inner defects $u(g,h)$,
  their $r$-fold tensor powers have scalar associator $\omega(g,h,l)^r$.
  Hence their obstruction class is $r[\omega]$ in additive notation.

  For compact simply connected simple $G$, apply this to the level-one
  jump-loop construction, using the same profiles at every level. Realize
  the level-$k$ vacuum representation on the invariant vacuum
  subrepresentation of the diagonal representation $\pi_1^{\otimes k}$.
  Then the local-defect construction gives
  \[
    [\omega_{\theta_k}]=k[\omega_{\theta_1}].
  \]
  This equality of obstruction classes does not require a new level-$k$
  torus-comparison calculation.
\end{prop}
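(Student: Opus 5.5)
The proof has two parts: an algebraic tensor-power identity, and an identification of the level-$k$ jump-loop construction with the diagonal tensor power of the level-one construction.

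\emph{Tensor powers.} Let $\alpha_g$ be lifts on a factor $M$ with defects $u(g,h)$ and scalar associator $\omega$. On $M^{\bar\otimes r}$ (again a factor) put $\alpha^{(r)}_g=\alpha_g^{\otimes r}$ and $u^{(r)}(g,h)=u(g,h)^{\otimes r}$. Then
\[
\alpha_g^{(r)}\alpha_h^{(r)}=\Ad u^{(r)}(g,h)\,\alpha^{(r)}_{gh}
\]
holds factorwise. Expanding the associator identity factorwise gives scalar $\omega(g,h,l)^r$, since each tensor factor contributes one copy of $\omega(g,h,l)$. Borel measurability is preserved. The class is $r[\omega]$ because the class does not depend on the choice of lift.

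\emph{Realizing level $k$ inside $\pi_1^{\otimes k}$.} Let $\Delta\colon\LL G\to\LL G^{\times k}$ be the diagonal. Pull back the product of $k$ basic extensions along $\Delta$ and push out along the multiplication character $\TT^k\to\TT$. This is the level-$k$ extension, since the characteristic classes add. So $\gamma\mapsto\pi_1(\gamma)^{\otimes k}$ is a positive-energy representation of $\Lpc G_k$ with central character $z^k$, as in the $F_k$ convention. The cyclic subrepresentation generated by $\Omega_1^{\otimes k}$ is irreducible of highest weight $k\Lambda_0$, so it is unitarily equivalent to $\pi_k$. Call its space $\Hil_0\subset\Hil_{G_1}^{\otimes k}$ and let $P$ be the projection onto it.

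Set $N=\pi_1^{\otimes k}(\Lpc_I G_k)''\subset M_1^{\bar\otimes k}$. Then $P\in N'$ and $M_k\cong NP$. Since $M_k$ is a factor and $N$ has a faithful normal representation on $\Hil_0$, the map $x\mapsto xP$ is an isomorphism $N\cong M_k$. I plan to get faithfulness of the reduction from the Reeh--Schlieder property of $\Omega_1^{\otimes k}$ for the tensor net. I expect this to be enough, since $NP\cong M_k$ is all that is needed.

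Now use the same Borel profiles $\gamma_g$ at every level. Then $U_{g,k}=\pi_1(\gamma_g)^{\otimes k}P$, so $\alpha_{g,k}$ is the restriction of $\alpha_{g,1}^{\otimes k}$ to $N$, compressed by $P$. For the defects, the splitting $\eta=\eta_I\eta_{I'}$ from Proposition~\ref{prop:JumpHomomorphism} is also diagonal. So $u_k(g,h)=\pi_1(\eta_I(g,h))^{\otimes k}P$ lies in $NP$, and this is exactly $u_1(g,h)^{\otimes k}$ compressed by $P$.

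Restriction to the invariant subalgebra $N\subset M_1^{\bar\otimes k}$ and the isomorphism with $NP$ both preserve the associator equation. The scalar, computed in the larger algebra by the first part, is $\omega_{\theta_1}^k$. The reduction is unital on $NP$, so the scalar survives. Hence $[\omega_{\theta_k}]=k[\omega_{\theta_1}]$.

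\emph{Main obstacle.} The delicate step is the phase bookkeeping. The Borel unitary representatives chosen at level $k$ must be the $k$-th tensor powers of level-one representatives compressed by $P$, modulo scalars. Otherwise the associator changes by a coboundary, and this is where I would be most careful. Changing sections changes $u$ by a Borel scalar 2-cochain and $\omega$ by its coboundary, so the class is unaffected and the conclusion stands.
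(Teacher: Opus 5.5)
Your proof is correct and follows the paper's argument: you tensor the associator identity, realize $\pi_k$ on the cyclic subspace of $\pi_1^{\otimes k}$ generated by $\Omega_1^{\otimes k}$, and restrict the level-one tensor implementers $\pi_1(\gamma_g)^{\otimes k}$ and defects $\pi_1(\eta_I(g,h))^{\otimes k}$ to that subspace, with section changes contributing only coboundaries. The one difference is your detour through the isomorphism $N\cong NP$ via Reeh--Schlieder, which is harmless but unnecessary: as you note yourself, compressing the operator identity by the nonzero projection $P\in N'$ already preserves the scalar $\omega_{\theta_1}^k$, whether or not $x\mapsto xP$ is injective.
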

\begin{proof}
  Tensor the identity defining $\omega$ in Section~\ref{ssec:Gkernels}
  $r$ times. The scalar on the right is multiplied $r$ times. Equivalently,
  pull back the external-product class along the diagonal $G\to G^r$.

  For the second assertion, the tensor vacuum is an affine highest-weight
  vector of weight $k\Lambda_0$ for the diagonal action. Its generated
  unitary vacuum subrepresentation is the level-$k$ vacuum representation;
  this is the usual affine highest-weight realization
  \cite{PS1986}*{Chapter~9}. Let $\Hil_k\subset\Hil_{G_1}^{\otimes k}$
  denote this invariant subspace and choose compatible representatives of
  the level-$k$ central extension. Both
  \[
    U_{g,1}^{\otimes k}=\pi_1(\gamma_g)^{\otimes k},\qquad
    u_1(g,h)^{\otimes k}=\pi_1(\eta_I(g,h))^{\otimes k}
  \]
  preserve $\Hil_k$, since they represent actual loop-group elements.
  Their restrictions are the level-$k$ lift implementers and local
  defect unitaries, respectively, up to the scalar choices of section.
  The latter restrictions lie in $M_k$ because $\eta_I\in\LL_I G$.
  Restricting the tensor associator identity to $\Hil_k$ therefore gives
  the scalar $\omega_{\theta_1}(g,h,l)^k$. Section changes contribute
  coboundaries, proving the class equality. This argument restricts
  an explicit operator identity; it does not infer equality of obstruction
  classes merely from the existence of a subnet.
\end{proof}
\begin{rmk}
  The assertion used here is that the operators restrict to the invariant
  vacuum subspace, not that $\pi_k$ is the entire tensor representation.
  Additivity still does not normalize the base class or establish
  faithfulness. For a simple group, level-one torus comparison identifies
  $[\omega_{\theta_1}]$ with the single basic generator; the preceding
  calculation then identifies $[\omega_{\theta_k}]$ with $k$ times that
  generator. Injectivity of the higher-level outer homomorphisms is a
  separate statement, proved in Theorem~\ref{thm:AnomalyAllTypes}.
\end{rmk}

\begin{cor}[Simply laced cases]
  \label{cor:SimplyLaced}
  Fix $G$ simply connected, connected, simply laced simple (any of $A_n$,
  $D_n$ with $n\ge4$, $E_6$, $E_7$, $E_8$), and $k\ge1$. There is a
  faithful $G$-kernel on $\A_{G_1}(I)^{\bar\otimes k}$ whose obstruction
  is $\tau(kc_G)$. In particular this realizes every positive multiple
  of the basic class on the hyperfinite type \threeone factor.
\end{cor}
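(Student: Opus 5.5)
The plan is to obtain the level-$k$ kernel as a diagonal tensor power of the level-one jump-loop kernel, so that the class is fixed by the level-one torus comparison and the tensor additivity of Proposition~\ref{prop:DiagonalAdditivity}.

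\textbf{Level one.} Let $L=\Qvee=Q$ with the basic form; it is even. By the Frenkel--Kac--Segal realization~\cite{Bi2011}*{Section~3.5}, $\A_{G_1}\cong\A_L$ compatibly with the torus loop group, so the restriction of $\pi_1$ to $\Lpc T_1$ is the lattice vacuum representation $V$ on the same Hilbert space (Proposition~\ref{prop:TorusLevelLattice} with $k=1$). For $t\in T$ we replace the Borel profiles by the torus profiles of Proposition~\ref{prop:TorusLoopInduction}. By profile independence in Proposition~\ref{prop:JumpHomomorphism}, this changes $\theta_1|_T$ only by inner automorphisms. Proposition~\ref{prop:TorusLoopInduction} then shows that these automorphisms equal the companion lifts $\beta_q$ on current and charged generators. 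The hypothesis of Proposition~\ref{prop:ReduceCartan} therefore holds with $N=M_1$ and $\jmath=\id$, the defects being the companion defects. Hence $\theta_1|_T$ is the faithful companion $T$-kernel, and its obstruction restricts to $q_1(x)=\basicform{x}{x}/2$. Theorem~\ref{thm:main} with $k=1$ (via Lemma~\ref{lem:TorusFaithfulness} and Theorem~\ref{thm:CartanRestriction}) gives a $G$-kernel $\theta_1$ with class $\tau(c_G)$. This is the argument of the proof of Theorem~\ref{thm:SU2main}, with $A_1$ replaced by a general ADE root lattice.

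\textbf{Level $k$.} Set $\theta^{[k]}_g=[\alpha_{g,1}^{\otimes k}]$ on $\A_{G_1}(I)^{\bar\otimes k}$, with defects $u_1(g,h)^{\otimes k}$. By the first part of Proposition~\ref{prop:DiagonalAdditivity}, the scalar associator is $\omega_{\theta_1}^k$, so the class is $k\,\tau(c_G)=\tau(kc_G)$. The factor is hyperfinite of type \threeone: it is the interval algebra of $\A_{G_1}^{\otimes k}$, the loop group net of $G^k$ at levels $(1,\dots,1)$, so Corollary~\ref{cor:LoopGroupNetAnalytic} applies.

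\textbf{Faithfulness (the main obstacle).} Nonvanishing of the class does not give injectivity by itself. I would use the slicing argument from the proof of Theorem~\ref{thm:main}. Suppose $\alpha_{g,1}^{\otimes k}=\Ad w$. Slicing $w$ by a normal functional over $k-1$ tensor legs gives a nonzero intertwiner between $\alpha_{g,1}$ on one factor and the identity. Polar decomposition in the factor then makes $\alpha_{g,1}$ inner, so $g=e$ by faithfulness of $\theta_1$. The point needing care is that the slice is nonzero and genuinely intertwines. The second route is to restrict to $T$ and apply the same slicing to the companion torus kernels, then conclude with Lemma~\ref{lem:TorusFaithfulness}.

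Both level one and level $k$ are covered, so the corollary holds for all $k\ge1$; the final sentence follows because $c_G$ generates $H^4(BG,\ZZ)\cong\ZZ$.
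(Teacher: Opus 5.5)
Your proposal is correct and follows the paper's proof essentially step for step. Level one goes through the Frenkel--Kac--Segal realization and Proposition~\ref{prop:TorusLoopInduction}, then Lemma~\ref{lem:TorusFaithfulness} and Theorem~\ref{thm:CartanRestriction}; level $k$ uses the diagonal tensor lifts, Proposition~\ref{prop:DiagonalAdditivity}, and the slicing/polar-decomposition argument for faithfulness. Routing level one through Proposition~\ref{prop:ReduceCartan} with $\jmath=\id$ and Theorem~\ref{thm:main} only repackages what the paper does directly. The slicing point you flag is settled by applying $\id\otimes\phi$ to $W(x\otimes1)=(\alpha_{g,1}(x)\otimes1)W$, with $\phi$ a normal functional such that $(\id\otimes\phi)(W)\neq0$.
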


\begin{proof}[Proof of Theorem~\ref{thm:SimplyLacedRealization} and
  Corollary~\ref{cor:SimplyLaced}]
  Put $L=Q=\Qvee$. The compatible level-one lattice realization of
  \cite{Bi2011}*{Section~3.5} and
  Proposition~\ref{prop:TorusLoopInduction} identify the maximal-torus
  restriction of $\theta_1$ with the faithful companion kernel on
  $\A_L(I)=\A_{G_1}(I)$. Lemma~\ref{lem:TorusFaithfulness} gives
  faithfulness on $G$. The companion anomaly is the basic lattice
  quadratic form; Theorem~\ref{thm:CartanRestriction} therefore identifies
  the obstruction with $\tau(c_G)$. This is exactly the rank-one proof
  with $A_1$ replaced by the simply laced root lattice.

  On $M^{[k]}=\A_{G_1}(I)^{\bar\otimes k}$ take the diagonal tensor
  lifts $\alpha_g^{[k]}=\alpha_{g,1}^{\otimes k}$ and defects
  $u^{[k]}(g,h)=u_1(g,h)^{\otimes k}$. Their obstruction is
  $k[\omega_{\theta_1}]$ by Proposition~\ref{prop:DiagonalAdditivity}.
  They remain faithful: if $\alpha_{g,1}^{\otimes k}=\Ad W$ with
  $W\in\U(M^{[k]})$, a nonzero normal slice of $W$ over the last $k-1$
  factors gives an intertwiner $a\in\A_{G_1}(I)$ with
  $\alpha_{g,1}(x)a=ax$ for all $x$. Factoriality makes $a^*a$ and
  $aa^*$ scalar, so its polar part is a unitary implementing
  $\alpha_{g,1}$. Level-one faithfulness forces $g=e$.
  The finite tensor product $M^{[k]}$ is again hyperfinite of type
  \threeone and is isomorphic to $R$.

  For a semisimple product with simply laced factors use
  $\bar\otimes_j M_j^{[k_j]}$ and component tensor lifts. The same
  slicing argument proves faithfulness, and the associator is the
  product of the component scalar associators, giving the tuple of
  levels. This proves the introduction's realization statement without
  asserting faithfulness on the separate level-$k$ vacuum factor.
\end{proof}

\begin{rmk}[Non-simply-laced vs.\ non-simply-connected]
  \label{rmk:NonSimplyLaced}
  For every compact connected $G$, Theorem~\ref{thm:CartanRestriction}
  embeds $H^4(BG,\ZZ)$ in the torsion-free group $H^4(BT,\ZZ)$.
  Thus non-simply-connectedness does not introduce torsion in degree four.
  For a finite central quotient $\widetilde G\to G$, admissible levels on
  the cover can nevertheless be restricted by integrality on the larger
  cocharacter lattice. Multiplicative-gerbe descent describes such constraints
  \cite{GaWa2009}; ordinary $H^3(G,\ZZ)$ must not be confused with
  $H^4(BG,\ZZ)$. Moreover $\pi_1(G)$ is the subgroup used for the central
  quotient, not necessarily the entire center of $\widetilde G$.

  For non-simply-laced simply connected groups, $\Qvee$ is still the
  appropriate even lattice. This supplies a torus quadratic form, but not
  equality of the nonabelian level-one net with a lattice net. A compatible
  torus comparison on the ambient factor is still required to apply
  Theorem~\ref{thm:main} in types $B_n,C_n,F_4,G_2$;
  Theorem~\ref{thm:AnomalyAllTypes} avoids it.
  At level $k$ the same torus form is $(\slot,\slot)_k$;
  equivalently one may use the Euclidean lattice
  $L_k=\sqrt{k}\,\Qvee$ and the torus isomorphism
  \[
    F/\Qvee\longrightarrow F/L_k,\qquad
    x+\Qvee\longmapsto\sqrt{k}x+L_k.
  \]
  Pullback of $y\mapsto\basicform{y}{y}/2$ is precisely
  $x\mapsto k\basicform{x}{x}/2$. This identifies the lattice at
  every level in every type. For the proof it is enough to make the
  representation/defect comparison at level one and use
  Proposition~\ref{prop:DiagonalAdditivity} for higher levels.
\end{rmk}

\subsection{Every class on the hyperfinite type \texorpdfstring{III$_1$}{III1} factor}
\label{ssec:EveryClass}

The only input needed is level one. Once the generator $c_G$ is
realized for every simple type, tensor products and opposite factors
produce every class, and products of groups are handled factorwise.
Level one in the simply laced types is
Theorem~\ref{thm:SimplyLacedRealization}; the remaining simple types are
reached by restriction (Remark~\ref{rmk:IndexOne}).

\begin{lem}[Pullback and opposite kernels]
  \label{lem:PullbackOpposite}
  Let $\theta\colon H\to\Out(M)$ be an $H$-kernel on a factor $M$ with
  Sutherland obstruction $[\omega]$.
  \begin{enumerate}
    \item If $\varphi\colon G\to H$ is an injective continuous
      homomorphism, then $\theta\circ\varphi$ is a $G$-kernel with
      obstruction $\varphi^*[\omega]$.
    \item Put $\alpha^{\op}(x^{\op})=\alpha(x)^{\op}$ on $M^{\op}$. Then
      $\theta^{\op}(h)=[\alpha_h^{\op}]$ is an $H$-kernel on $M^{\op}$
      with obstruction $-[\omega]$.
  \end{enumerate}
\end{lem}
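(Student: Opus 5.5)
For part (1), I take a Borel lift $\alpha\colon H\to\AUT(M)$ of $\theta$ together with Borel defect unitaries $u(h,h')$ and associator $\omega$. The composite $\alpha\circ\varphi$ is then a Borel lift of $\theta\circ\varphi$, because $\varphi$ is continuous and hence Borel. The unitaries $u(\varphi(g),\varphi(g'))$ are Borel defects for this lift, since $\varphi$ is a homomorphism and so $\varphi(g)\varphi(g')=\varphi(gg')$. Substituting into the defining identity of Section~\ref{ssec:Gkernels} shows the scalar associator is $\omega(\varphi(g),\varphi(g'),\varphi(g''))=(\varphi^*\omega)(g,g',g'')$. The class is independent of choices, so the obstruction is $\varphi^*[\omega]$. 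Faithfulness is immediate: $\theta\circ\varphi(g)=1$ gives $\varphi(g)=e$ because $\theta$ is injective, and then $g=e$ because $\varphi$ is injective.

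For part (2), I first check that $\alpha_h^{\op}$ is an automorphism of $M^{\op}$. It is additive, $*$-preserving and normal. Multiplicativity holds because $\alpha_h(x)^{\op}\cdot\alpha_h(y)^{\op}=(\alpha_h(y)\alpha_h(x))^{\op}=\alpha_h(yx)^{\op}$.

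Next I record how inner automorphisms transform. For $u\in\U(M)$ set $\hat u=(u^*)^{\op}$. Then $(\Ad u(x))^{\op}=(uxu^*)^{\op}=(u^*)^{\op}x^{\op}u^{\op}=\Ad\hat u(x^{\op})$. This has three consequences.
\begin{itemize}
\item $\alpha\mapsto\alpha^{\op}$ preserves innerness in both directions, so $\theta^{\op}$ is a well-defined injective homomorphism into $\Out(M^{\op})$.
\item The lift $h\mapsto\alpha_h^{\op}$ is Borel.
\item The defects are $\alpha_g^{\op}\alpha_h^{\op}=\Ad\hat u(g,h)\,\alpha_{gh}^{\op}$.
\end{itemize}

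It remains to compute the associator. Take the adjoint of the identity $\alpha_g(u(h,l))u(g,hl)=\omega\, u(g,h)u(gh,l)$ to get
\[
u(g,hl)^*\alpha_g(u(h,l)^*)=\bar\omega\,u(gh,l)^*u(g,h)^*.
\]
Applying ${}^{\op}$ reverses products, which gives
\[
\alpha_g^{\op}(\hat u(h,l))\,\hat u(g,hl)=\bar\omega(g,h,l)\,\hat u(g,h)\,\hat u(gh,l)
\]
in $M^{\op}$. This is exactly the defining identity for $\theta^{\op}$ with scalar $\bar\omega$. Hence the obstruction is $[\bar\omega]=-[\omega]$ in additive notation.

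The only point needing care is this bookkeeping of order reversal. One must confirm that after taking adjoints and applying ${}^{\op}$, the factors land in the same positions as in the defining identity rather than in a twisted variant. The computation above does check this, so I expect no genuine obstacle in either part.
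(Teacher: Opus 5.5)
Your proposal is correct and follows the paper's proof essentially verbatim: in (1) you compose the lift and defects with $\varphi$ to get associator $\omega\circ\varphi^{\times3}$, and in (2) you use $(\Ad u)^{\op}=\Ad\bigl((u^*)^{\op}\bigr)$ and then apply adjoint followed by ${}^{\op}$ (the paper's map $x\mapsto(x^*)^{\op}$) to the associator identity, which yields $\bar\omega$. One small correction: Borel measurability of $h\mapsto\alpha_h^{\op}$ and of the defects $\hat u$ follows from continuity of the maps $\alpha\mapsto\alpha^{\op}$ and $u\mapsto(u^*)^{\op}$, not from the inner-automorphism formula as you suggest. That continuity observation is all that is needed there.
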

\begin{proof}
  (1) Composing injective homomorphisms is injective, and
  $\alpha_{\varphi(g)}$, $u(\varphi(g),\varphi(h))$ are Borel lifts and
  defects for $\theta\circ\varphi$ whose associator is
  $\omega\circ\varphi^{\times3}$.

  (2) $\alpha^{\op}$ is inner in $M^{\op}$ exactly when $\alpha$ is inner
  in $M$, since
  $(\Ad u)^{\op}=\Ad\big((u^*)^{\op}\big)$; so $\theta^{\op}$ is
  injective. From $\alpha_g\alpha_h=\Ad u(g,h)\,\alpha_{gh}$ we get
  $\alpha_g^{\op}\alpha_h^{\op}=\Ad u'(g,h)\,\alpha_{gh}^{\op}$ with
  $u'(g,h)=(u(g,h)^*)^{\op}$. Applying $x\mapsto(x^*)^{\op}$, which
  reverses products and conjugates scalars, to the associator identity
  for $u$ gives
  \[
    \alpha_g^{\op}(u'(h,l))\,u'(g,hl)=\overline{\omega(g,h,l)}\,
    u'(g,h)\,u'(gh,l).
  \]
\end{proof}

\begin{lem}[Realizable classes form a group]
  \label{lem:RealizableSubgroup}
  Let $G$ be a compact group and let $\mathcal R(G)\subset
  H^3_{\Borel}(G,\TT)$ be the set of Sutherland obstructions of
  $G$-kernels on $R$. If $\mathcal R(G)$ is nonempty, it is a subgroup.
\end{lem}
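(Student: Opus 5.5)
The plan is to show that $\mathcal R(G)$ is closed under addition and under negation. Since it is nonempty by hypothesis, this makes it a subgroup: given $[\omega]\in\mathcal R(G)$, closure under negation gives $-[\omega]\in\mathcal R(G)$, and closure under addition then gives $0=[\omega]+(-[\omega])\in\mathcal R(G)$.

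\emph{Negation.} This step is direct. If $\theta\colon G\to\Out(R)$ is a $G$-kernel with obstruction $[\omega]$, then Lemma~\ref{lem:PullbackOpposite}(2) gives a $G$-kernel $\theta^{\op}$ on $R^{\op}$ with obstruction $-[\omega]$. It remains to transport it to $R$. The hyperfinite type \threeone factor is unique up to isomorphism, and $R^{\op}$ is again hyperfinite of type \threeone, so there is an isomorphism $\Phi\colon R^{\op}\to R$. Conjugating the lifts and defects by $\Phi$, namely taking $\Phi\alpha^{\op}_g\Phi^{-1}$ and $\Phi(u'(g,h))$, preserves Borel measurability, injectivity into $\Out$, and the scalar associator. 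Hence $-[\omega]\in\mathcal R(G)$.

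\emph{Addition.} Let $\theta_1,\theta_2$ be $G$-kernels on $R$ with lifts $\alpha^{(i)}$, defects $u_i$, and associators $\omega_i$. On $R\bar\otimes R$ I take the lifts $\alpha_g=\alpha^{(1)}_g\otimes\alpha^{(2)}_g$ and the defects $u(g,h)=u_1(g,h)\otimes u_2(g,h)$. Tensoring the two associator identities, as in the proofs of Theorem~\ref{thm:main} and Proposition~\ref{prop:DiagonalAdditivity}, gives the associator $\omega_1\omega_2$, so the class is $[\omega_1]+[\omega_2]$. Borel measurability of the tensor lifts is clear. Finally, $R\bar\otimes R\cong R$, since the tensor product is hyperfinite and of type \threeone (for instance by Connes--Haagerup, or because the flow of weights is trivial); transporting along this isomorphism as before puts the kernel on $R$.

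The main obstacle is faithfulness of the tensor kernel, i.e.\ showing that $\alpha^{(1)}_g\otimes\alpha^{(2)}_g$ inner forces $g=e$. I would use the slicing argument from the proof of Corollary~\ref{cor:SimplyLaced}. Suppose $\alpha^{(1)}_g\otimes\alpha^{(2)}_g=\Ad W$ with $W$ unitary in $R\bar\otimes R$. This implementation holds on all of $R\bar\otimes R$, in particular on $R\otimes 1$. Applying a normal functional $\phi$ on the second factor, chosen so that the slice is nonzero, produces $a=(\id\otimes\phi)(W)\neq0$ with $\alpha^{(1)}_g(x)a=ax$ for all $x\in R$. Factoriality makes $a^*a$ and $aa^*$ scalars, so the polar part of $a$ is a unitary implementing $\alpha^{(1)}_g$. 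Thus $\theta_1(g)$ is trivial, and faithfulness of $\theta_1$ gives $g=e$. Only one of the two kernels needs to be faithful for this argument.
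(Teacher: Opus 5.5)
Your proposal is correct and follows the paper's proof: closure under addition comes from the diagonal tensor kernel on $R\bar\otimes R\cong R$ with defects $u_1\otimes u_2$, with faithfulness via the slicing and polar-decomposition argument, and closure under negation comes from Lemma~\ref{lem:PullbackOpposite}(2) together with $R^{\op}\cong R$ by uniqueness of the injective type III$_1$ factor. Your explicit transport along the isomorphism $\Phi$ and your remark that only one of the two kernels needs to be faithful are just spelled-out versions of steps the paper leaves implicit.
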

\begin{proof}
  For $G$-kernels $\theta,\theta'$ on $R$, the diagonal tensor product
  $g\mapsto[\alpha_g\otimes\alpha'_g]$ on $R\bar\otimes R\cong R$ with
  defects $u\otimes u'$ has associator $\omega\omega'$. It is faithful:
  if $\alpha_g\otimes\alpha'_g=\Ad W$, a nonzero normal slice of $W$
  over the second factor intertwines $\alpha_g$ with the identity, and
  factoriality makes its polar part a unitary implementing $\alpha_g$,
  so $g=e$. Hence $\mathcal R(G)$ is closed under addition. It is closed
  under negation by Lemma~\ref{lem:PullbackOpposite}(2), since
  $R^{\op}$ is an injective factor of type \threeone and hence
  $R^{\op}\cong R$ \cite{Ha1987}. A nonempty subset closed under
  addition and negation is a subgroup; in particular
  $\theta\otimes\theta^{\op}$ realizes $0$.
\end{proof}

\begin{rmk}[Index-one embeddings]
  \label{rmk:IndexOne}
  For $\varphi\colon G\to H$ between simple simply connected groups,
  $\varphi^*c_H=j_\varphi c_G$ with $j_\varphi$ the Dynkin embedding
  index. The non-simply-laced types embed with index one into simply
  laced groups \cite{Dy1952}:
  \[
    \Sp(n)\subset\SU(2n),\quad
    \Spin(2n+1)\subset\Spin(2n+2),\quad
    G_2\subset\Spin(8),\quad
    F_4\subset E_6 .
  \]
  By Theorem~\ref{thm:CartanRestriction},
  $j_\varphi=\tfrac12\langle\varphi\alpha^\vee,\varphi\alpha^\vee\rangle_H$
  for a long root $\alpha$ of $G$, so index one says that a long-root
  coroot of $G$ is a coroot of $H$. This is checked directly: for
  $\Sp(n)\subset\SU(2n)$ the coroot $e_i$ of $2\varepsilon_i$ maps to
  $e_i-e_{n+i}$; the long roots of $B_n$ are roots of $D_{n+1}$; and the
  long roots of $G_2$ and $F_4$ span root subgroups $\SU(3)\subset\Spin(8)$
  and $\Spin(8)\subset E_6$.
\end{rmk}

\begin{cor}[Every obstruction on the hyperfinite type \threeone factor]
  \label{cor:EveryObstruction}
  Let $G$ be a compact, connected, simply connected, simple Lie group.
  Every class in $H^3_{\Borel}(G,\TT)\cong H^4(BG,\ZZ)$ is the Sutherland
  obstruction of a $G$-kernel $G\to\Out(R)$ on the hyperfinite type
  \threeone factor $R$.
\end{cor}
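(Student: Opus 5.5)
The plan is to realize the generator $\tau(c_G)$ by some $G$-kernel on $R$, and then let Lemma~\ref{lem:RealizableSubgroup} produce everything else. Since $H^3_{\Borel}(G,\TT)\cong H^4(BG,\ZZ)\cong\ZZ$ by \eqref{eq:H4simple}, the subgroup $\mathcal R(G)$ is then all of $\ZZ$: positive multiples come from diagonal tensor powers, negatives from opposite kernels (Lemma~\ref{lem:PullbackOpposite}(2)), and $0$ from $\theta\otimes\theta^{\op}$. So the whole corollary reduces to producing one kernel whose obstruction is $\pm\tau(c_G)$, on a factor isomorphic to $R$.

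I would split by type.

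\textbf{Simply laced types.} If $G$ is of type $A_n$, $D_n$ ($n\ge4$), $E_6$, $E_7$ or $E_8$, Theorem~\ref{thm:SimplyLacedRealization} with $k=1$ gives a faithful $G$-kernel on $\A_{G_1}(I)\cong R$ with obstruction $\tau(c_G)$. This case needs nothing further.

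\textbf{Non-simply-laced types.} For $B_n$, $C_n$, $F_4$ and $G_2$, I would pick an index-one embedding $\varphi\colon G\to H$ into a simply laced simply connected group $H$ from Remark~\ref{rmk:IndexOne}. The cases $\Spin(3)\cong\SU(2)$ and $\Sp(1)\cong\SU(2)$ are already simply laced. I would then pull back the level-one $H$-kernel from Theorem~\ref{thm:SimplyLacedRealization} along $\varphi$.

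\textbf{Pullback step.} Lemma~\ref{lem:PullbackOpposite}(1) applies because $\varphi$ is an injective continuous homomorphism between compact groups. It gives a $G$-kernel on the same factor $R$ with obstruction $\varphi^*\tau(c_H)$. Naturality of $\tau$ under group homomorphisms, which comes from Theorem~\ref{thm:CartanRestriction}, turns this into $\tau(\varphi^*c_H)=\tau(j_\varphi c_G)=\tau(c_G)$, since $j_\varphi=1$.

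The step I expect to be the main obstacle is checking that each listed embedding really has Dynkin index one. Via Theorem~\ref{thm:CartanRestriction}, this amounts to verifying that a long-root coroot of $G$ maps to a coroot of $H$, with squared length $2$ in the basic form of $H$. I would do this type by type, as sketched in Remark~\ref{rmk:IndexOne}.
\begin{itemize}
\item For $\Sp(n)\subset\SU(2n)$: the coroot $e_i$ of $2\varepsilon_i$ maps to $e_i-e_{n+i}$.
\item For $\Spin(2n+1)\subset\Spin(2n+2)$: the long roots $\pm\varepsilon_i\pm\varepsilon_j$ of $B_n$ are roots of $D_{n+1}$.
\item For $G_2\subset\Spin(8)$ and $F_4\subset E_6$: the long roots generate root subsystems $A_2$ and $D_4$ that sit inside $H$ as genuine root subgroups.
\end{itemize}
If the index-one claim looks shaky in some case, the argument still survives as long as the index $j_\varphi$ is known. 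The subgroup property then gives realizability of $j_\varphi\ZZ$, and covering all of $\ZZ$ requires a combination of embeddings whose indices have gcd $1$. Index one is the cleanest way to get that.

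A small bookkeeping point remains: the factor underlying the pulled-back kernel is $\A_{H_1}(I)$. This is $R$ by Corollary~\ref{cor:LoopGroupNetAnalytic}, and every tensor product or opposite factor used in the subgroup argument is again isomorphic to $R$ by \cite{Ha1987}.
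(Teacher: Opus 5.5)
Your proposal is correct and follows the paper's own proof essentially step for step. You reduce via Lemma~\ref{lem:RealizableSubgroup} to realizing $\tau(c_G)$, use the level-one kernel of Theorem~\ref{thm:SimplyLacedRealization} in types $A$, $D$, $E$, and otherwise pull back the level-one $H$-kernel along an index-one embedding of Remark~\ref{rmk:IndexOne} via Lemma~\ref{lem:PullbackOpposite}(1); the only slip is attributional, since naturality of $\tau$ is a property of the identification \eqref{eq:Transgression} itself, while Theorem~\ref{thm:CartanRestriction} is what gives $\varphi^*c_H=j_\varphi c_G$ in $H^4(BG,\ZZ)$.
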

\begin{proof}
  It suffices to realize the generator $c_G$ at level one: then
  $\mathcal R(G)$ is a subgroup of $H^4(BG,\ZZ)=\ZZ c_G$ containing
  $c_G$ (Lemma~\ref{lem:RealizableSubgroup}). For simply laced $G$ the
  level-one case of Theorem~\ref{thm:SimplyLacedRealization} realizes
  $c_G$. For the other types restrict the level-one kernel of $H$ along
  an embedding of Remark~\ref{rmk:IndexOne}; by
  Lemma~\ref{lem:PullbackOpposite}(1) and naturality of
  \eqref{eq:Transgression} its class is $\varphi^*c_H=c_G$.
\end{proof}

\begin{rmk}[Semisimple groups]
  \label{rmk:EveryObstructionSemisimple}
  Corollary~\ref{cor:EveryObstruction} already gives the semisimple case.
  For $G=\prod_jG_j$ simply connected with simple factors, each $BG_j$ is
  $3$-connected with $H^4(BG_j,\ZZ)\cong\ZZ$, so the Künneth formula
  gives $H^4(BG,\ZZ)=\bigoplus_j\ZZ c_{G_j}$ with no mixed or torsion
  terms. A class $(k_j)_j$ is realized by $\bar\otimes_j\theta_j$ on
  $\bar\otimes_jR\cong R$, where $\theta_j$ realizes $k_jc_{G_j}$: the
  associator is $\prod_j\omega_j$, and faithfulness follows by the
  slicing argument of Lemma~\ref{lem:RealizableSubgroup}, as in the proof
  of Theorem~\ref{thm:main}. Thus every class in $H^4(BG,\ZZ)$ is
  realized on $R$ for every compact simply connected semisimple $G$.
\end{rmk}

\subsection{The jump-loop kernel in every type}
\label{ssec:JumpLoopAllTypes}

The restriction used in Corollary~\ref{cor:EveryObstruction} can be
carried out on the loop group nets themselves. The argument is that of
Proposition~\ref{prop:DiagonalAdditivity}, with the diagonal
$G\to G^k$, of index $k$, replaced by an index-one embedding. Below,
$[\omega_{\theta}]$ denotes the associator class of a Borel lift of a
homomorphism $\theta\colon G\to\Out(M)$ into the outer automorphisms
of a factor, as in Section~\ref{ssec:Gkernels}. It is independent of the
lift whether or not $\theta$ is injective, and it vanishes for the
trivial homomorphism.

\begin{prop}[Restriction along index-one embeddings]
  \label{prop:IndexOneRestriction}
  Let $\varphi\colon G\to H$ be an injective homomorphism of compact
  simply connected simple groups with $\varphi^*c_H=c_G$, and let
  $k\ge1$. For the jump-loop homomorphisms $\theta^G_k$ and $\theta^H_k$
  of Proposition~\ref{prop:JumpHomomorphism},
  \[
    [\omega_{\theta^G_k}]\ =\ \varphi^*[\omega_{\theta^H_k}].
  \]
\end{prop}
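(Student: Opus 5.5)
We follow the pattern of Proposition~\ref{prop:DiagonalAdditivity}: we realize the level-$k$ vacuum representation of $\Lpc G_k$ inside the restriction of the level-$k$ vacuum representation of $\Lpc H_k$, and then restrict an explicit operator identity. First, the loop map $\LL\varphi\colon\LL G\to\LL H$ is covered by a homomorphism of central extensions $\Lpc G_k\to\Lpc H_k$ that is the identity on the central circle. This holds because the pulled-back Lie-algebra cocycle is $\kappa_k$ scaled by the Dynkin index, and $\varphi^*c_H=c_G$ forces that index to be one. Since $G$ is simply connected, the pullback of $\Lpc H_k$ is then isomorphic to $\Lpc G_k$ (cf.\ \cite{PS1986}*{Chapter~4}). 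The map carries $\LL_IG$ into $\LL_IH$, hence $\Lpc_IG_k$ into $\Lpc_IH_k$.

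Second, the vacuum vector $\Omega^H_k$ is annihilated by the positive modes of $\widehat{\LL\LieG}$ and by $\LieG$. Index one means the central charge of $\widehat{\LL\LieG}$ is $k$. So $\Omega^H_k$ is a highest-weight vector of weight $k\Lambda_0$ for $\widehat{\LL\LieG}_k$. The closed subspace $\Hil_k$ it generates under $\pi^H_k\circ\LL\varphi$ is therefore a copy of the level-$k$ vacuum representation of $G$, by unitarity and integrability \cite{PS1986}*{Chapter~9}. The local algebra $\A_{G_k}(I)$ is then identified with $\pi^H_k(\LL\varphi(\Lpc_IG_k))''|_{\Hil_k}$. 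Because $\Hil_k$ is invariant under the image of $\LL G$ and $\A_{G_k}(I)$ is a factor, the reduction by the projection onto $\Hil_k$ is an isomorphism of this subalgebra onto $M^G_k$. This subalgebra lies in $M^H_k$.

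Third, we compute with matched profiles. For $g\in G$ we take the $H$-profile of $\varphi(g)$ to be $\varphi\circ\gamma_g$. This is allowed by the profile independence in Proposition~\ref{prop:JumpHomomorphism}, which changes the lifts of $\theta^H_k\circ\varphi$ only by inner automorphisms and so leaves $[\omega]$ unchanged. Then $\eta^H(\varphi g,\varphi h)=\varphi\circ\eta^G(g,h)$, and the $I$-cut commutes with $\varphi$. Hence the unitaries $U^H_{\varphi(g),k}$ and $u^H_k(\varphi g,\varphi h)$ preserve $\Hil_k$ and restrict there, up to section scalars, to $U^G_{g,k}$ and $u^G_k(g,h)$.

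For the associator we must handle the $H$-lifts carefully. The lift $\alpha^H_{\varphi(g),k}$ is defined on all of $M^H_k$, while the defects $u^H_k$ lie in the image subalgebra $\pi^H_k(\LL\varphi(\Lpc_IG_k))''\subset M^H_k$, where the scalar associator equation can be written. That equation has the form $\alpha(u)u=\omega\,uu$ with every term obtained from the ambient operators $U^H$ and $u^H$. Compressing it to $\Hil_k$ yields the $G$-associator equation with the same scalar $\omega^H(\varphi g,\varphi h,\varphi l)$. Borel section changes contribute coboundaries, which gives $[\omega_{\theta^G_k}]=\varphi^*[\omega_{\theta^H_k}]$.

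The main obstacle is the first two steps. We must show that index one gives an honest morphism of the global central extensions, not just of the Lie-algebra cocycles, compatible with the chosen local sections. We must also show that the compression to $\Hil_k$ is faithful on the relevant von Neumann algebra, so that the defect unitaries compress to the defects of the $G$-net. For the latter we would use that the central support of the projection onto $\Hil_k$ in the commutant of the image is the identity, or simply that $M^G_k$ is a factor and the compression is a normal surjective $*$-homomorphism onto it.
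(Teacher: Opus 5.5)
Your proposal is correct and follows the paper's proof essentially step for step: you pull back the level-$k$ extension using index one, identify the closed subspace generated by $\Omega$ with the level-$k$ vacuum representation of $\LL G$, take the profiles $\varphi\circ\gamma_g$, and restrict the associator identity of $\theta^H_k\circ\varphi$ to that invariant subspace. The faithfulness of the compression that you flag as the main obstacle is not actually needed, as the paper's proof shows: restricting an operator identity to an invariant subspace already yields the associator identity of $\theta^G_k$ with the same scalar. Moreover, factoriality of $M^G_k$ together with surjectivity of the compression would not by itself give injectivity of the reduction map.
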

\begin{proof}
  The basic forms satisfy $\varphi^*\langle\slot,\slot\rangle_H=
  \langle\slot,\slot\rangle_G$, since both are invariant forms on the
  simple Lie algebra of $G$ and they agree on a long coroot. Hence the
  affine cocycle $\kappa_k$ of $H$ pulls back along $\LL\varphi$ to that
  of $G$, and the level-$k$ extension of $\LL H$ pulls back to the
  level-$k$ extension of $\LL G$ \cite{PS1986}*{Chapter~4}.

  The vacuum vector $\Omega$ of $\Hil_{H_k}$ is annihilated by the
  positive modes of the affine algebra of $H$ and is fixed by $H$, so it
  is a highest-weight vector of weight $k\Lambda_0$ for the affine algebra
  of $G$. As in the proof of Proposition~\ref{prop:DiagonalAdditivity},
  the closed $\LL G$-invariant subspace $\Hil'\subset\Hil_{H_k}$ it
  generates carries the level-$k$ vacuum representation of $\LL G$
  \cite{PS1986}*{Chapter~9}.

  Choose profiles $\gamma_g$ for $\theta^G_k$ and use $\varphi\circ\gamma_g$
  as profiles for $\theta^H_k$ at $\varphi(g)$; this is allowed by
  Proposition~\ref{prop:JumpHomomorphism}, and it gives a Borel lift of
  $\theta^H_k\circ\varphi$. The operators
  \[
    \pi_{H,k}(\varphi\circ\gamma_g),\qquad
    \pi_{H,k}(\varphi\circ\eta_I(g,h))
  \]
  represent actual loop-group elements of $\LL G$, so they preserve
  $\Hil'$. Their restrictions are the level-$k$ lift implementers and
  local defects of $\theta^G_k$, up to the scalar choices of section.
  Restricting the associator identity of $\theta^H_k\circ\varphi$ to
  $\Hil'$ gives the associator identity of $\theta^G_k$ with scalar
  $\omega_{\theta^H_k}\circ\varphi^{\times3}$. Section changes contribute
  coboundaries.
\end{proof}

\begin{lem}[Normal subgroups of simple compact groups]
  \label{lem:NormalSubgroups}
  Let $G$ be a compact connected Lie group with simple Lie algebra. Every
  normal subgroup $N\subset G$, closed or not, is either contained in
  $Z(G)$ or equal to $G$.
\end{lem}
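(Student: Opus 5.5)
The plan is to reduce to the identity component of $N$ and then handle the totally disconnected part using connectedness of $G$. No closedness of $N$ is assumed, so I will not pass to $\overline N$ as a Lie subgroup directly. Instead I work with conjugacy classes, which are compact connected submanifolds.

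\textbf{Step 1.} Suppose $N\not\subset Z(G)$, and pick $n\in N$ with $n\notin Z(G)$. Since $N$ is normal, it contains the conjugacy class $C_n=\{gng^{-1}:g\in G\}$. It therefore also contains the set
\[
  S=\{gng^{-1}n^{-1}:g\in G\}.
\]
This set $S$ is the continuous image of the connected group $G$, contains $e$, and is a connected compact subset of $N$. It is not $\{e\}$, because $n$ is not central.

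\textbf{Step 2.} Consider the subgroup $N_0$ algebraically generated by $S$ and all its $G$-conjugates. This is a normal subgroup of $G$ contained in $N$. It is also path-connected: each conjugate of $S$ is the image of $G$ under a smooth map through $e$, so every element of $N_0$ is joined to $e$ by products of such smooth paths. By Yamabe's theorem (a path-connected subgroup of a Lie group is an analytic subgroup), $N_0$ is an immersed connected Lie subgroup. Its Lie algebra $\mathfrak n$ is an $\Ad(G)$-invariant subspace of $\LieG$, hence an ideal.

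The ideal $\mathfrak n$ is nonzero, because $N_0\supset S\ne\{e\}$ and a connected Lie subgroup with zero Lie algebra is trivial. Simplicity of $\LieG$ then forces $\mathfrak n=\LieG$. So $N_0$ is the connected subgroup with Lie algebra $\LieG$, namely $G$ itself, since $G$ is connected. Hence $N\supset N_0=G$.

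\textbf{Alternative to Yamabe.} If one prefers to avoid Yamabe, there is a more elementary route. Differentiate the map $g\mapsto gng^{-1}n^{-1}$ at $g=e$. Its image is $(\id-\Ad_n)\LieG$, viewed in right-translated coordinates, and this is nonzero since $n\notin Z(G)$. Choose curves in $S$ whose tangents span this space, then take their $G$-conjugates. The tangent directions so obtained span the ideal generated by $(\id-\Ad_n)\LieG$, which is all of $\LieG$. A product of $\dim G$ such curves then gives a map $\RR^{\dim G}\to N$ with surjective differential at $0$. By the inverse function theorem, $N$ contains a neighbourhood of $e$. A connected group is generated by any neighbourhood of the identity, so $N=G$.

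\textbf{Main obstacle.} The main care point is this non-closed case: justifying that the subgroup generated by these curves has a Lie algebra at all, or contains an open set. The inverse function theorem argument above handles this cleanly, and I would present that version. The final sentence of the argument uses only connectedness of $G$.
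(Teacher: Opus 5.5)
Your proposal is correct and contains two arguments. Your Step~2 is essentially the paper's proof. The paper takes the subgroup generated by the image of $x\mapsto xgx^{-1}g^{-1}$ for a noncentral $g\in N$. It gets normality from the identity $x\,c(y)\,x^{-1}=c(xy)\,c(x)^{-1}$; you adjoin all conjugates instead, which yields the same subgroup. It then notes path-connectedness and applies Yamabe's theorem to get a nonzero ideal, hence $\LieG$.

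The inverse-function-theorem version, which you say you would actually present, is a genuinely different and more elementary route. It uses four ingredients:
\begin{enumerate}
\item The differential of $x\mapsto xnx^{-1}n^{-1}$ at $e$ is $\id-\Ad_n\neq0$.
\item The span $W$ of the $\Ad(G)$-translates of the image of $\id-\Ad_n$ is a nonzero ideal. You should state this step explicitly rather than calling $W$ ``the ideal generated by'' that image: $W$ is $\Ad(G)$-invariant, hence $\operatorname{ad}(\LieG)$-invariant by differentiating $\Ad(\exp tX)$, hence an ideal, hence equal to $\LieG$.
\item The inverse function theorem, applied to a product of $\dim G$ conjugated commutator curves $t\mapsto g\,c(\exp tX)\,g^{-1}\in N$ whose tangents form a basis of $\LieG$.
\item A connected group is generated by any neighbourhood of $e$.
\end{enumerate}

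What this buys is self-containedness. It avoids Yamabe's theorem entirely. It also avoids a standard point left implicit both in the paper and in your Step~2: that the Lie algebra of a normal analytic subgroup is $\Ad(G)$-invariant, which rests on analytic subgroups being initial submanifolds. It also shows directly that $N$ contains an open neighbourhood of $e$. The paper's route is shorter once Yamabe is taken as a black box. Neither argument uses compactness, so both prove the statement for any connected Lie group with simple Lie algebra.
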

\begin{proof}
  Let $g\in N\setminus Z(G)$ and $c(x)=xgx^{-1}g^{-1}$. The image of
  $c$ is a path-connected subset of $N$ containing $e$ and not equal to
  $\{e\}$. Since $x\,c(y)\,x^{-1}=c(xy)\,c(x)^{-1}$, the subgroup $S$
  generated by this image is normal; it is path connected. By Yamabe's
  theorem $S$ is an analytic subgroup \cite{Ya1950}, so its Lie algebra is
  a nonzero ideal, hence everything, and $S=G$. Thus $N=G$.
\end{proof}

\begin{prop}[Central elements act by outer automorphisms]
  \label{prop:CentreOuter}
  Let $G$ be a compact simply connected simple Lie group, $k\ge1$, and
  $z\in Z(G)\setminus\{e\}$. Then $\alpha_{z,k}$ is outer on
  $M_k=\A_{G_k}(I)$.
\end{prop}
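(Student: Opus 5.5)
The plan is to show that if $\alpha_{z,k}$ were inner on $M_k$, the implementing unitary would define a nontrivial DHR simple current restricting trivially to $\A_{G_k}(I)$, which is impossible. By Lemma~\ref{lem:FlankTriviality}, for central $z$ the jump loop $\gamma_z$ acts trivially on both flanks $\A_{G_k}(c,a)$ and $\A_{G_k}(b,d)$. Indeed, on the right flank the constant gauge action of $z$ is trivial precisely because $z\in Z(G)$. The return region also lies in the complement, so there $\gamma_z$ is a path from $z$ back to $e$. I would take the path $p_z$ to be the exponential path $s\mapsto\exp(2\pi s x)$ with $x\in\Qvee$-coset representative in $P^\vee$ (a coweight) defining $z$. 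Then $\gamma_z$ is a loop in $T$ and, by profile independence (Proposition~\ref{prop:JumpHomomorphism}), I may instead use the cut profile with the return absorbed at the cut. The resulting $\Ad U_{z,k}$ is then an automorphism of the whole net that is the identity on $\A(J)$ for all $J$ off $I$ on the cut circle, with trivial right boundary twist. By the discussion in Section~\ref{ssec:TwistedRepresentations}, it is an untwisted, transportable DHR automorphism localized in $I$.

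The key identification is that this DHR automorphism is the simple current attached to $z$. It corresponds to the affine Weyl element $t_x$ (translation by the coweight $x$) composed with the diagram automorphism of $\widehat{\LieG}$ it induces. Standard loop-group theory \cite{PS1986} says that conjugation by the non-closed loop $\exp(2\pi\theta x)$, a loop in $G_{\mathrm{ad}}$, carries the level-$k$ vacuum $\pi_k$ to the integrable representation of highest weight $k\Lambda_{j}$. Here $j$ is the special node with $z\leftrightarrow\Lambda_j$ under $Z(G)\cong P^\vee/\Qvee$. This representation is inequivalent to the vacuum whenever $z\ne e$, since its lowest $L_0$-eigenvalue is $h=k\langle x,x\rangle/2\notin\{0\}$ for $x\notin\Qvee$, or at least its weight differs. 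Under Corollary~\ref{cor:LoopGroupNetAnalytic} this is a nontrivial DHR sector of $\A_{G_k}$.

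Now suppose $\alpha_{z,k}=\Ad w$ with $w\in\U(M_k)$. Then $\rho=\Ad(w^*U_{z,k})$ restricted to the net is localized in $I$, equivalent to the above sector, and trivial on $\A_{G_k}(I)$. It is also trivial on $\A(J)$ for $J\subset I'$, the off-$I$ intervals on the cut circle, and hence on $I'$ by strong additivity (available by complete rationality, \cite{KaLoMg2001}). So $\rho$ is trivial on $\A(I)\vee\A(I')$, which generates $\B(\Hil)$ by Haag duality. A localized endomorphism acting trivially on the quasilocal algebra is the identity sector, contradicting the nontriviality of $[\rho]$. Therefore $\alpha_{z,k}$ is outer.

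The main obstacle is the middle step: matching, with correct section and cut conventions, the cut jump loop at a central element with the simple-current sector $k\Lambda_j$. In particular one must check that the return region really contributes only inner or trivial pieces, so that the DHR class is unambiguous. I would first attempt this via the torus picture at level one in ADE. There Proposition~\ref{prop:TorusLoopInduction} identifies it with $\beta_q$ for $q\in P\setminus Q$, a nontrivial class in $L^*/L$. For general types and levels I would use the loop-group spectral flow formula directly, comparing $L_0$-spectra.
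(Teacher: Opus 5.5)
Your overall picture is reasonable: conjugation by the cut profile gives a localized automorphism in the simple-current sector, and innerness of $\alpha_{z,k}$ would trivialize it. Your key input, that twisting $\pi_k$ by the jump at $z$ gives the inequivalent level-$k$ simple current, is also the one the paper uses. But the step that is supposed to produce the nontrivial localized automorphism fails as written.

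For $z\neq e$ the cut profile is not an honest loop. It is the discontinuous loop $\zeta$ with $\zeta(x+2\pi)=z\,\zeta(x)$, an element of $\LL_{Z(G)}G\setminus\LL G$. Proposition~\ref{prop:JumpHomomorphism} only compares smooth circle loops, and conjugation by $\zeta$ has no unitary implementer $U_{z,k}$ on $\Hil_{G_k}$. Indeed, suppose a unitary $U_{z,k}$ acted trivially on every $\A(J)$ with $J$ off $I$ on the cut circle. Then strong additivity and Haag duality would put $U_{z,k}$ in $M_k$, so $\alpha_{z,k}$ would be inner, which is the opposite of the claim. If instead $U_{z,k}=\pi_k(\gamma_z)$ is the honest-loop implementer, then $\rho=\Ad(w^*U_{z,k})$ is implemented on the vacuum space, so its sector is trivial. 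It is also not localized in $I$, because $\gamma_z$ is not central-valued on the return region. Either way, your contradiction never gets its nontrivial sector.

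Neither fallback closes this. The level-one ADE torus comparison does not reach the other types and levels. Comparing $L_0$-spectra presupposes that you already know which representation your automorphism induces.

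The missing idea is to use the hypothetical implementer directly instead of building a DHR endomorphism. Suppose $\alpha_{z,k}=\Ad u$ with $u\in\U(M_k)$.
On both flanks, $\Ad u$ is trivial by locality, and $\Ad\pi_k(\gamma_z)$ is trivial by Lemma~\ref{lem:FlankTriviality}, since $z$ is central. Strong additivity then gives $\Ad u=\Ad\pi_k(\gamma_z)$ on $\A(J)$ for $J=(c,d)$. Locality gives $\Ad u=\id$ on $\A(K)$ for $K=I'$.
Since $\zeta=\gamma_z$ on $J$ and $\zeta$ is central-valued on $K$, this says $u\,\pi_k(\ell)\,u^*=\pi_k(\zeta\ell\zeta^{-1})$ up to phase for all $\ell\in\LL_JG\cup\LL_KG$.
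Because $J\cup K=\SOne$, these subgroups generate the connected group $\LL G$ (factor loops near $e$ with a cutoff). So $u$ intertwines $\pi_k$ with $\pi_k\circ\Ad\zeta$. This contradicts the inequivalence you quote; note that $\zeta$ differs from your $\exp(2\pi\theta x)$ by an honest loop, so the two twisted representations are equivalent.

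This gluing step is exactly what your ``main obstacle'' paragraph leaves open. Once it is in place, the DHR framing, transportability and the final Haag-duality argument are unnecessary.
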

\begin{proof}
  Write $\pi=\pi_k$ and $\gamma=\gamma_z$, so $\gamma=e$ on $(c,a)$ and
  $\gamma=z$ on $(b,d)$. Let $\zeta\colon\RR\to G$ equal $\gamma$ on
  $[c,d]$ and $z$ on $[d,c+2\pi]$, extended by $\zeta(x+2\pi)=z\,\zeta(x)$.
  It is smooth, since it is locally constant near $c$ and $d$, so
  $\zeta$ is a discontinuous loop in $\LL_{Z(G)}G$ representing the
  nontrivial class of $z$ in $\LL_{Z(G)}G/\LL G\cong Z(G)$
  \cite{TL1999}*{Section~4.3}. Conjugation by $\zeta$ is an automorphism
  of $\LL G$ because $z$ is central.

  Suppose $\alpha_{z,k}=\Ad u$ on $M_k$ with $u\in\U(M_k)$. Put
  $J=(c,d)$ and $K=I'$, so $J\cup K=\SOne$. On $\A(c,a)$ and $\A(b,d)$
  both $\Ad u$ (by locality) and $\Ad\pi(\gamma)$ (by
  Lemma~\ref{lem:FlankTriviality}, $z$ being central) are trivial. By
  strong additivity, which is part of complete rationality
  \cite{CaKaLoWi2015}*{Section~3.3} and hence holds by
  Corollary~\ref{cor:LoopGroupNetAnalytic},
  $\A(J)=\A(c,a)\vee M_k\vee\A(b,d)$, so $\Ad u=\Ad\pi(\gamma)$ on
  $\A(J)$. Since $\zeta=\gamma$ on $J$, in $\PU(\Hil_{G_k})$
  \[
    u\,\pi(\ell)\,u^*=\pi(\zeta\ell\zeta^{-1}),\qquad \ell\in\LL_JG.
  \]
  On $K$ the values of $\zeta$ are central, so $\zeta\ell\zeta^{-1}=\ell$
  for $\ell\in\LL_KG$, and $u$ commutes with $\pi(\LL_KG)$ by locality.
  Hence the same identity holds for $\ell\in\LL_KG$.

  The group $\LL G$ is generated by $\LL_JG\cup\LL_KG$: it is connected
  because $G$ is simply connected, and a loop $\exp\xi$ near $e$ factors
  as $\exp(f\xi)\cdot\exp(f\xi)^{-1}\exp\xi$ with $f$ a cutoff supported
  in $J$ and equal to $1$ outside $K$. Therefore $u$ intertwines $\pi$
  with the conjugated representation $\ell\mapsto\pi(\zeta\ell\zeta^{-1})$.
  By \cite{TL1999}*{Theorem~4.3.3 and Proposition~4.1.4} the latter is
  the irreducible positive-energy representation of level $k$ whose
  highest weight is the image of $0$ under the action of
  $z^{-1}\ne e$ on the level-$k$ alcove, namely $k\lambda^\vee_i\neq0$
  for the fundamental coweight $\lambda^\vee_i$ of the special node
  attached to $z^{-1}$. Irreducible positive-energy representations of
  the same level with different highest weights are inequivalent, a
  contradiction.
\end{proof}

\begin{thm}[Anomaly is the level in every type]
  \label{thm:AnomalyAllTypes}
  Let $G$ be a compact simply connected simple Lie group and $k\ge1$.
  The jump-loop homomorphism
  $\theta_k\colon G\to\Out(\A_{G_k}(I))$ satisfies
  \[
    [\omega_{\theta_k}]\ =\ \tau(kc_G).
  \]
  Moreover $\theta_k$ is injective, so it is a $G$-kernel on the
  hyperfinite type \threeone factor $\A_{G_k}(I)$.
\end{thm}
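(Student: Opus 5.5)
The proof has two parts: first the class, then injectivity.

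\emph{The class.} We first treat level one. If $G$ is simply laced, the proof of Theorem~\ref{thm:SimplyLacedRealization} already gives $[\omega_{\theta_1}]=\tau(c_G)$ on $\A_{G_1}(I)$. It uses the Frenkel--Kac--Segal identification $\A_{G_1}=\A_{\Qvee}$, Proposition~\ref{prop:TorusLoopInduction}, the companion anomaly and Theorem~\ref{thm:CartanRestriction}.

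If $G$ is of type $B_n$, $C_n$, $F_4$ or $G_2$, we choose an index-one embedding $\varphi\colon G\to H$ into a simply laced $H$ as in Remark~\ref{rmk:IndexOne}, so that $\varphi^*c_H=c_G$. Proposition~\ref{prop:IndexOneRestriction} then gives, at every level $k$,
\[
  [\omega_{\theta^G_k}]=\varphi^*[\omega_{\theta^H_k}].
\]
Naturality of $\tau$ reduces the non-simply-laced case to the simply laced case at the same level. For general $k$ we apply Proposition~\ref{prop:DiagonalAdditivity} in the simply laced case, obtaining $[\omega_{\theta^H_k}]=k[\omega_{\theta^H_1}]=\tau(kc_H)$. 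Pulling back along $\varphi$ gives $\tau(kc_G)$. In the simply laced case we take $\varphi=\id$. In particular the class is nonzero, since $H^4(BG,\ZZ)\cong\ZZ$ is torsion free and $k\ge1$.

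\emph{Injectivity.} Let $N=\ker\theta_k$, a normal subgroup of $G$, not necessarily closed. By Lemma~\ref{lem:NormalSubgroups}, either $N=G$ or $N\subset Z(G)$.

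If $N=G$, then $\theta_k$ is the trivial homomorphism. A Borel lift may then be taken to consist of inner automorphisms, and a cocycle computation shows that the associator class vanishes, as noted before Proposition~\ref{prop:IndexOneRestriction}. Since the class is independent of the lift, this contradicts $\tau(kc_G)\neq0$.

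If $N\subset Z(G)$, Proposition~\ref{prop:CentreOuter} shows that $\alpha_{z,k}$ is outer for every $z\neq e$ in $Z(G)$, so $N=\{e\}$. The Borel lift comes from Proposition~\ref{prop:JumpHomomorphism}, and the local factor is hyperfinite of type \threeone by Corollary~\ref{cor:LoopGroupNetAnalytic}. Hence $\theta_k$ is a $G$-kernel.

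\emph{Main obstacle.} The delicate point is the triviality-implies-vanishing step for a lift that is not given as a group action. I would argue it directly. If every $\alpha_g$ is inner, a Borel choice of implementers $w_g\in\U(M_k)$ exists by measurable selection, since $\U(M_k)\to\mathrm{Inn}(M_k)$ is a Borel surjection with a Borel section on a Polish space. Writing $\alpha_g=\Ad w_g$, the defects are $u(g,h)=\lambda(g,h)\,w_gw_hw_{gh}^*$ with $\lambda$ scalar, because $M_k$ is a factor. Substituting into the associator identity shows that $\omega$ is the coboundary of $\lambda$.

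A secondary check is that Proposition~\ref{prop:DiagonalAdditivity} applies on the actual level-$k$ factor $\A_{G_k}(I)$ and not only on the tensor power. This holds because its proof restricts the operator identity to the vacuum subspace $\Hil_k$.
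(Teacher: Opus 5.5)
Your proposal is correct and follows the paper's proof essentially step for step. The class comes from Corollary~\ref{cor:SimplyLaced} and Proposition~\ref{prop:DiagonalAdditivity} in simply laced types and from Proposition~\ref{prop:IndexOneRestriction} otherwise, and injectivity comes from nonvanishing of $\tau(kc_G)$, Lemma~\ref{lem:NormalSubgroups} and Proposition~\ref{prop:CentreOuter}. Your one addition is the explicit coboundary computation showing that the trivial homomorphism has vanishing class, which the paper only asserts. That computation is fine, but the Borel choice of implementers should be justified by Lusin--Souslin applied to the injective continuous map $\U(M_k)/\TT\to\Aut(M_k)$, since $\mathrm{Inn}(M_k)$ is not closed in $\Aut(M_k)$ for the hyperfinite factor.
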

\begin{proof}
  For simply laced $G$, Corollary~\ref{cor:SimplyLaced} gives
  $[\omega_{\theta_1}]=\tau(c_G)$, and
  Proposition~\ref{prop:DiagonalAdditivity} gives
  $[\omega_{\theta_k}]=\tau(kc_G)$. Otherwise choose
  $\varphi\colon G\to H$ from Remark~\ref{rmk:IndexOne} with $H$ simply
  laced. By Proposition~\ref{prop:IndexOneRestriction}, the simply laced
  case for $H$, and naturality of~\eqref{eq:Transgression},
  $[\omega_{\theta_k}]=\varphi^*\tau(kc_H)=\tau(kc_G)$.

  The kernel $N$ of $\theta_k$ is normal. It is not $G$, because the
  trivial homomorphism has class $0\ne\tau(kc_G)$ in the torsion-free
  group $H^4(BG,\ZZ)$. By Lemma~\ref{lem:NormalSubgroups},
  $N\subset Z(G)$, and $N=\{e\}$ by Proposition~\ref{prop:CentreOuter}.
  The factor is hyperfinite of type \threeone by
  Corollary~\ref{cor:LoopGroupNetAnalytic}.
\end{proof}

\begin{rmk}
  \label{rmk:EveryObstructionScope}
  By Theorem~\ref{thm:AnomalyAllTypes} every positive class is realized
  by the jump-loop kernel $\theta_k$ on $\A_{G_k}(I)\cong R$ itself. The
  negative and zero classes in Corollary~\ref{cor:EveryObstruction}
  still come from opposite factors and tensor products.
\end{rmk}

\section{Anomaly is the level}
\label{sec:AnomalyIsLevel}

By Theorem~\ref{thm:AnomalyAllTypes}, for every compact simply connected
simple $G$ and $k\ge1$,
\begin{equation}
  \label{eq:AnomalyIsLevel}
  \tau^{-1}([\omega_{\theta_k}])
  \ =\ k\,c_{\mathrm{basic}}\ \in H^4(BG,\ZZ).
\end{equation}
Here $c_{\mathrm{basic}}$ is normalized by its torus restriction. Three
different cocycles enter the construction: a scalar group two-cocycle
$c_{\LL G}$ describing a chosen section of the loop-group central extension,
the factor-valued defect $u_k(g,h)$ of
Proposition~\ref{prop:JumpHomomorphism}, and its scalar three-associator
$\omega_{\theta_k}$. Factoriality makes the last object scalar; it does
not identify it with $c_{\LL G}$ without the comparison argument.

The differentiated central extension has affine Lie algebra cocycle
\[
  \kappa_k(\xi,\eta)\ =\ \frac{1}{2\pi}\!\int_{0}^{2\pi}\!
    (\xi,\eta')_k\,\dd\theta,
\]
with the basic form and circle-coordinate convention fixed in
Section~\ref{sec:Notation}~\cite{PS1986}. Geometric transgression relates
the basic characteristic class to an integral class in $H^3(G,\ZZ)$,
represented by a suitably normalized invariant Wess--Zumino three-form.
This geometric description is motivation. The operative proof is
level-one torus/lattice associator comparison, followed by integral Cartan
restriction (the Weyl-invariant quadratic form determines the simple-group
class), followed by level additivity. In this construction, projectively
equivalent loop-group representations give the same scalar calculation
when their lifts and local defects are identified. Merely knowing the
central-extension class, without this identification, is not the comparison
proof.

\begin{rmk}[Remaining existence problem]
  \label{rmk:RemainingExistence}
  The scalar tensor calculation in Proposition~\ref{prop:DiagonalAdditivity}
  is independent of the normalization of the base class. The latter is
  supplied by the lattice computation together with Cartan restriction,
  not by additivity alone. Outside types $A$, $D$, $E$ the class is
  obtained by restriction (Theorem~\ref{thm:AnomalyAllTypes}), so the
  ambient-factor comparison of Proposition~\ref{prop:ReduceCartan} is not
  needed for it. The analytic net properties are supplied by
  Corollary~\ref{cor:LoopGroupNetAnalytic}, and faithfulness by
  Lemma~\ref{lem:NormalSubgroups} and Proposition~\ref{prop:CentreOuter}.
  The ambient comparison remains open only as a statement about the torus
  route of Theorem~\ref{thm:main}.
\end{rmk}

\def\cprime{$'$}\newcommand{\noopsort}[1]{}

\end{document}